\newtheorem{thm}{Theorem}[section]
\newtheorem{rema}{Remark}[section]
\newtheorem{exple}{Example}[section]
\newtheorem{cor}[thm]{Corollary}
\newtheorem{lem}[thm]{Lemma}
\newtheorem{prop}[thm]{Proposition}
\newtheorem{defn}[thm]{Definition}
\theoremstyle{definition}
\numberwithin{equation}{section}\newcommand{\resumename}{R\'esum\'e}
\DeclareFixedFont{\petitefonte}{\encodingdefault}%
{\familydefault}{\seriesdefault}{\shapedefault}{5pt}
\begin{document}


\title[Diamond cone for $\mathfrak{sl}(m,n)$]{Diamond cone for $\mathfrak{sl}(m,n)$}
\author[B. Agrebaoui, D. Arnal and O. Khlifi]{Boujema\^{a} Agrebaoui, Didier Arnal, Olfa Khlifi}

\address{Unit\'e de recherche UR 09-06, Facult\'e des Sciences, Universit\'e de Sfax, Route de Soukra, km 3,5,
B.P. 1171, 3000 Sfax, Tunisie.} \email{<B.Agreba@fss.rnu.tn>;}

\address{
Institut de Math\'ematiques de Bourgogne\\
UMR CNRS 5584\\
Universit\'e de Bourgogne\\
U.F.R. Sciences et Techniques
B.P. 47870\\
F-21078 Dijon Cedex\\France} \email{Didier.Arnal@u-bourgogne.fr}

\address{Unit\'e de recherche UR 09-06, Facult\'e des Sciences, Universit\'e de Sfax, Route de Soukra, km 3,5,
B.P. 1171, 3000 Sfax, Tunisie.} \email{khlifi\_olfa@yahoo.fr}

\begin{abstract}
In this paper, we first study the shape algebra and the reduced shape algebra for the Lie superalgebra $\mathfrak{sl}(m,n)$. We define the quasistandard tableaux, their collection is the diamond cone for $\mathfrak{sl}(m,n)$, which is a combinatorial basis for the reduced shape algebra. We realize a bijection between the set of semistandard tableaux with shape $\lambda$ and the set of quasistandard tableaux with shape $\mu\leq\lambda$, by using the `super jeu de taquin' on skew semistandard tableaux. This gives the compatibility of the diamond cone with the natural stratification of the reduced shape algebra.
\end{abstract}


\keywords{Lie superalgebra, simples modules, Young tableaux}

\subjclass[2000]{17B20, 16D60, 05E10}


\maketitle \vspace{.20cm}
\vskip1cm

\date{\today}


\section{Introduction}


\

The theory of Lie superalgebras was initied by V. Kac (see for instance \cite{K}). V. Kac introduced the classical (simple) Lie superalgebras, studied their classification and their irreducible (finite dimensional) representations, which are characterized by their highest dominant weight $\lambda$. Denote the corresponding module $\mathbb S^\lambda$. The theory is thus very close to the usual theory for simple Lie algebras. However, there are important differences. For instance the tensor product of two irreducible representations can be not completely reducible.\\

Denote $\mathbb S^\bullet$ the direct sum of all irreducible representations of a simple Lie algebra $\mathfrak g$. It is a natural algebra called the shape algebra of $\mathfrak g$. For instance, consider $\mathfrak g=\mathfrak{sl}(m)$, put $V=\mathbb C^m$. The space $V^{\otimes N}$ is a completely reducible $\mathfrak{gl}(m)$ representation. An explicit decomposition is given by the Schur-Weyl duality: let $\lambda=(a_1,\dots,a_m)$ be a sequence of natural numbers such that $N=\sum_jja_j$. One can see $\lambda$ as the shape of the Young tableaux having $a_j$ columns with height $j$. For any standard tableau $S_\lambda$ with shape $\lambda$, define the Young symmetrizer $c_{S_\lambda}$ in the group algebra of $S_N$, acting on the right side on tensors. As a consequence of the duality, the space $V_{S_\lambda}=\{v\in V^{\otimes N},~vc_{S_\lambda}=v\}$ is a simple $\mathfrak{gl}(m)$ module whose type is characterized by $\lambda$. By varying $N$ and $\lambda$, we get all the simple $\mathfrak{gl}(m)$ modules. For restriction to $\mathfrak{sl}(m)$, we consider only $\lambda$ such that $a_m=0$ and the standard tableau $St_\lambda$ obtained by the filling column by column. Thus, the shape algebra $\mathbb S^\bullet$ is a quotient of $Sym^\bullet(\wedge V)$. Moreover, a basis for this shape algebra is indexed by the family of semistandard Young tableaux with at most $m-1$ rows.\\

In this paper we look for the Lie superalgebra $\mathfrak{sl}(m,n)$ (see \cite{K,HKTV,V}). Put $V=\mathbb C^{m,n}$. The definition of the shape algebra for $\mathfrak{sl}(m,n)$ requires the restriction to the irreducible covariant tensorial representations, they are the irreducible subrepresentations in $T(V)$, indeed, the tensor product of two such representations is completely reducible. The covariant tensorial irreducible representations of $\mathfrak{sl}(m,n)$ were studied by Berele and Regev in \cite{BR}, through a generalization of the Schur-Weyl duality. Especially, they consider shapes $\lambda=(a_1,\dots,a_m,a'_1,\dots,a'_{n-1})$, placed in the hook, {\sl i.e.} built by adding under a shape having $a_j$ column with height $j$ a shape having $a'_i$ rows with length $i$, with the restriction $a_m\geq\sup\{i,~a'_i>0\}$. Choose the standard tableau $St_\lambda$, with shape $\lambda$, then $\mathbb S^\lambda$ is the space of $v\in V^{\otimes\sum ja_j+ia'_i}$ such that $vc_{St_\lambda}=v$. Moreover, a basis for $\mathbb S^\lambda$ is labelled by the semistandrad tableaux with shape $\lambda$ (see \cite{BR,KW}).\\

For a simple Lie algebra $\mathfrak g$, the reduced shape algebra $\mathbb S_{red}^\bullet$ is the quotient of the shape algebra by the ideal generated by $v_\lambda-1$, where $v_\lambda$ is a well chosen highest weight vector in $\mathbb S^\lambda$. Let $\mathfrak n^+$ be the nilpotent factor in the Iwasawa decomposition of $\mathfrak g$, as a $\mathfrak n^+$ module, $\mathbb S_{red}^\bullet$ is indecomposable, it is the union of the modules $\mathbb S^\lambda|_{\mathfrak n^+}$, with the stratification $\mathbb S^\mu|_{\mathfrak n^+}\subset\mathbb S^\lambda|_{\mathfrak n^+}$, if $\mu\leq \lambda$.\\

A way to describe a basis for $\mathbb S_{red}^\bullet$, respecting this stratification is the selection, among the semistandard tableaux of some tableaux called quasistandard. The {\sl diamond cone} is by definition the collection of all quasistandard tableaux. The projection map $p:\mathbb S^\bullet\longrightarrow\mathbb S_{red}^\bullet$ becomes a $push$ mapping on rows of the semistandard tableaux. This $push$ mapping can be defined through the use of the `jeu de taquin'. This program was completed for $\mathfrak{sl}(m)$, $\mathfrak{sp}(2n)$, $\mathfrak{so}(2n+1)$ and the rank 2 semisimple Lie algebras and $\mathfrak{sl}(m,1)$ (see \cite{AAB,AAK,AK,Kh}).\\

In this paper, we study this construction for the case of the Lie superalgebra $\mathfrak{sl}(m,n)$, using $push$ functions and `super jeu de taquin' corresponding to horizontal translations on the $m$ first rows and vertical translations on the bottom of the first $n-1$ columns.\\  

The paper is organized as follows. In Section 2, we recall the construction of the reduced shape algebra and quasistandard tableaux for $\mathfrak{sl}(m)$.

In Section 3, we recall the fundamental results on the Lie superalgebra $\mathfrak{sl}(m,n)$ and its finite dimensional irreducible representations.

In Section 4 and 5, we describe the shape algebra of $\mathfrak{sl}(m,n)$ and define the semistandard tableaux for $\mathfrak{sl}(m,n)$.

To describe the multiplication of the shape algebra directly on the tableaux, we need to study relations between tensor products of vectors in $\mathbb C^{m,n}$. These relations are the Garnir and Pl\"ucker relations, in Section 6, we recall their equivalence. These relations generate the ideal defining the shape algebra $\mathbb S^\bullet$ as a quotient of the tensor algebra $T(\mathbb C^{m,n})$.

With these relations, in Section 7, we define a multiplication $\star$ on the set of semistandard tableaux, which corresponds to the multiplication in the shape algebra.

In Section 8, we define the reduced shape algebra for $\mathfrak{sl}(m,n)$ and study the stratification of the corresponding $\mathfrak n^+$ module.

In Section 9, we define quasistandard tableaux, and the $push$ function. We prove that these tableaux form a basis for the reduced shape algebra, well adapted to the above stratification. 

Finally, in Section 10, we define the `super jeu de taquin' on skew semistandard tableaux and prove that the $push$ function is the result of a succession of actions of this `super jeu de taquin'.

This achieves the combinatorial description of the reduced shape algebra of $\mathfrak{sl}(m,n)$, {\sl i.e.} the description of the diamond cone for $\mathfrak{sl}(m,n)$.\\


\section{Diamond cone for $\mathfrak{sl}(m)$}

\subsection{Shape algebra of $\mathfrak{sl}(m)$ and semistandard tableaux }

\

Recall that the Lie algebra $\mathfrak{sl}(m) = \mathfrak{sl}(m,\mathbb C)$ is the set of $m\times m$ traceless matrices, it is the Lie algebra of the Lie group $SL(m)$ of $m\times m$ matrices, with determinant 1.\\

First, put $V=\mathbb C^m$, denote $(e_1,\dots,e_m)$ its canonical basis. Then $\mathfrak{sl}(m)$ acts naturally on $T(V)$. For any sequence $\lambda=(a_1,\dots,a_{m-1})$ of natural numbers, see $\lambda$ as the shape of a tableau having $a_{m-1}$ columns with height $m-1$, \dots, $a_1$ columns withheight $1$. Suppose that the heights of the columns in $\lambda$ are $c_1,\dots,c_k$, then to any Young tableau with shape $\lambda$, {\sl i.e.} to any filling $T$ of the shape $\lambda$ by entries $t_{i,j}$ in $\{1,\dots,m\}$, put:
$$
e^T=(e_{t_{1,1}}\otimes e_{t_{2,1}}\otimes\dots\otimes e_{t_{c_1,1}})\otimes(e_{t_{1,2}}\otimes\dots\otimes e_{t_{c_2,2}})\otimes\dots\otimes(e_{t_{1,k}}\otimes\dots\otimes e_{t_{c_k,k}}).
$$
For instance, if $S^0_\lambda$ is the tableau defined by $s^0_{i,j}=i$ (for any $i$ and $j$),
$$
e^{S^0_\lambda}=(e_1\otimes e_2\otimes\dots\otimes e_{c_1})\otimes(e_1\otimes\dots\otimes e_{c_2})\otimes\dots\otimes(e_1\otimes\dots\otimes e_{c_k}).
$$

We use the Young symmetrizer $c_\lambda$ associated to the standard tableau $St_\lambda$, with shape $\lambda$, defined by
$$
St_\lambda=(st_{i,j}),\qquad st_{i,j}=i+\sum_{j<i}c_j,
$$ 
foe any $i$ and $j$. For any Young tableau $T$, put $e_T=e^Tc_\lambda$. Then the polyvector:
$$
v_\lambda=e_{S^0_\lambda}=(e_1\wedge e_2\wedge\dots\wedge e_{m-1})^{a_{m-1}}\otimes\dots\otimes (e_1)^{a_1}\in Sym^{a_{m-1}}(\wedge^{m-1}V) \otimes\dots\otimes Sym^{a_1}(V).
$$
By construction, $v_\lambda$ is a primitive weight vector for the action of $\mathfrak{sl}(m)$ on $T(V)$, $v_\lambda c_\lambda=v_\lambda$, thus it is a highest weight vector for $\mathbb S^\lambda$, and $\mathbb S^\lambda$ is the submodule in $Sym^\bullet(\wedge V)$ generated by $v_\lambda$.

\begin{defn}

\

A Young tableaux of shape $\lambda$ is semistandard if its entries are increasing along each row and strictly increasing along each column. We note $SS^\lambda$ the set of all semistandard tableaux with shape $\lambda$.\\
\end{defn}

This notion allows to select a basis for $\mathbb S^\lambda$.

\begin{thm}

\

If $\lambda=(a_1,\dots,a_{m-1})$ is a shape, a basis for ${\mathbb S}^\lambda$ is given by the family of polyvectors $(e_T)$, where $T$ is a semistandard Young tableau with shape $\lambda$.\\
\end{thm}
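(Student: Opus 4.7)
The plan is to establish spanning and linear independence separately, and then combine them.

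For spanning, I would exploit the structure of the Young symmetrizer $c_\lambda$, which factors as the product of a row-symmetrizer and a column-antisymmetrizer for $St_\lambda$. Because of the column-antisymmetrization, $e_T = 0$ whenever some column of $T$ has a repeated entry, and transposing two entries within a single column multiplies $e_T$ by $-1$. Hence, up to sign, we may assume from the outset that $T$ is column-strict. If some row still fails to be weakly increasing, a Garnir relation --- of the type to be recalled in Section~6 --- expresses $e_T$ as a linear combination of $e_{T'}$ where each $T'$ is obtained from $T$ by exchanging a well-chosen subset of entries between two adjacent columns. Iterating this straightening must be shown to terminate at semistandard tableaux by arguing on a suitable monomial order (for example the reverse-lex order on the column-reading word). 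This yields that $\{e_T : T \in SS^\lambda\}$ spans $\mathbb S^\lambda$.

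For linear independence, I would count dimensions. Each $e_T$ is a weight vector for the Cartan subalgebra of $\mathfrak{sl}(m)$ with weight equal to the content of $T$, so it is enough to match dimensions within each weight space. By the classical combinatorial expansion of the Schur polynomial, $s_\lambda(x_1,\dots,x_m) = \sum_{T \in SS^\lambda} x^{\mathrm{content}(T)}$, and by Weyl's character formula the character of $\mathbb S^\lambda$ is precisely $s_\lambda$. Consequently $\dim \mathbb S^\lambda = |SS^\lambda|$, and more finely the Kostka numbers match the weight multiplicities; combined with spanning this forces linear independence.

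The main obstacle, in my view, is the termination of the straightening. One must check both that the Garnir relations genuinely reduce every $e_T$ to a combination of semistandard $e_{T'}$, and that the chosen order strictly decreases at each step. This rests on the precise combinatorial form of the Garnir identities (Section~6) and on the verification that every tableau appearing on the right-hand side of such an identity is strictly smaller than $T$ in the chosen order. Once this is in place, the character/Kostka input is standard and the basis statement follows immediately.
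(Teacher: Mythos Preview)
The paper does not actually prove this theorem: it is stated in Section~2 as a classical fact being recalled, with no argument supplied. So there is nothing in the paper to compare your proposal against.

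That said, your outline is a standard and correct route to the result. The spanning argument via column-antisymmetry followed by Garnir straightening is the classical one; the termination you flag as the main obstacle is indeed the crux, and it is handled in the literature exactly as you suggest, by choosing a suitable total order on column-strict fillings (e.g.\ lexicographic on the column word) and checking that each Garnir rewrite strictly decreases. Your independence argument via the Schur character is also standard and clean. One minor remark: you could alternatively bypass the character-theoretic input by proving linear independence directly, for instance by specializing to diagonal matrices and observing that the $e_T$ for semistandard $T$ have distinct leading monomials in a suitable order; but the dimension count you propose is equally valid and arguably more transparent.
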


On the other hand, for any tableau $C$ with one column: $C=(c_{1,j})=(k_j)$, the polyvector $e_C$ is:
$$
e_C=e_{k_1}\wedge\dots\wedge e_{k_c}. 
$$
It is possible to realize the space $\mathbb S^\bullet=\bigoplus\mathbb S^\lambda$ as a quotient of the algebra $Sym^\bullet(\wedge V)$ by an ideal.

\begin{thm}\label{shapeslm}

\

For any column
$$
C=\begin{array}{|c|}
\hline
i_1\\
\hline
\vdots\\
\hline
i_p\\
\hline
\end{array},
$$
put $e_C=e_{\{i_1,\dots,i_p\}}$.

Then the space ${\mathbb S}^\bullet=\bigoplus_{\lambda}~~{\mathbb S}^\lambda,$ is isomorphic to the quotient of the algebra $Sym^{\bullet}(\bigwedge V)$ by the ideal $\mathcal P$ generated by the Pl\"ucker relations: for any $p\geq q$, for any column $C$ with height $p$ and column $D$ with height $q$,
$$
e_{\{i_1,i_2,\dots,i_p\}}e_{\{j_1,j_2,\dots,j_q\}}+\sum\limits_{\begin{smallmatrix}
A \subset\{i_1,\dots,i_p\}\\
\# A = r\leq q
\end{smallmatrix}}\pm e_{(\{i_1,\dots,i_p\}\setminus A)\cup\{j_1,\dots,j_r\}}e_{A\cup \{j_{r+1},\dots,j_q\}}=0.
$$
\end{thm}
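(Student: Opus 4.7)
The plan is to construct a surjective algebra homomorphism $\Phi: Sym^\bullet(\wedge V) \twoheadrightarrow \mathbb{S}^\bullet$ and identify its kernel with the Pl\"ucker ideal $\mathcal{P}$.

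First, I would define $\Phi$ on generators by sending $e_C = e_{i_1} \wedge \cdots \wedge e_{i_p} \in \wedge^p V$ (for a column $C$ of height $p$ with entries $i_1 < \cdots < i_p$) to the corresponding polyvector of the one-column shape, and extend multiplicatively: a product $e_{C_1} \cdots e_{C_k}$ of columns arranged by non-increasing heights $p_1 \geq \cdots \geq p_k$ maps to $e_T$, where $T$ is the tableau obtained by juxtaposing $C_1, \ldots, C_k$ from left to right. That this descends to a map on the commutative algebra $Sym^\bullet(\wedge V)$ (which requires invariance under permutations of columns of equal height) follows because the row-symmetrizing part of $c_\lambda$ symmetrizes positions across columns of the same height. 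Surjectivity onto $\mathbb{S}^\bullet$ is immediate from Theorem 2.2, since the image contains every $e_T$ with $T$ semistandard, and these span.

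Second, I would verify that every Pl\"ucker relation lies in $\ker \Phi$. This is a direct computation in $V^{\otimes(p+q)}$: after applying the column-antisymmetrizer part of $c_\lambda$, the image of the Pl\"ucker polynomial becomes an antisymmetrization identity. The classical derivation uses an auxiliary total antisymmetrization over $q+1$ positions (chosen from one specific slot of the first column together with all $q$ slots of the second, using $p \geq q$): expanding and regrouping the resulting sum according to the subset $A \subset \{i_1, \ldots, i_p\}$ exchanged with entries from $\{j_1, \ldots, j_q\}$ yields exactly the stated relation, which therefore vanishes in $\mathbb{S}^\bullet$. Hence $\Phi$ factors as $\bar\Phi : Sym^\bullet(\wedge V)/\mathcal{P} \to \mathbb{S}^\bullet$.

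Third, and this is where the real work lies, I would prove $\bar\Phi$ is injective by a straightening algorithm. Fix a combinatorial order on products of columns --- for instance, the lexicographic order on the tuple of entries read column by column and top to bottom --- and show that for any non-semistandard tableau $T$, one can locate a row violation and apply a Pl\"ucker relation at that position to rewrite $e_T$ modulo $\mathcal{P}$ as a linear combination of strictly smaller $e_{T'}$. Iterating this rewriting terminates in a linear combination of semistandard tableaux. Combined with Theorem 2.2, which says that these semistandard monomials form a basis of $\mathbb{S}^\bullet$, this gives both injectivity of $\bar\Phi$ and an explicit straightening basis for the quotient.

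The main obstacle is this third step: one must set up the order carefully so that every single term produced by a Pl\"ucker relation at a row violation is strictly smaller than the original tableau. This is not automatic because the relation produces several exchange terms with entries shuffled between the two columns, and one must bound the rank of all of them simultaneously. Once this point is secured, termination of the straightening algorithm and linear independence of semistandard monomials in the quotient follow formally from Theorem 2.2.
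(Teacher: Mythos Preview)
The paper does not prove this theorem: it is stated in Section~2 as classical background for $\mathfrak{sl}(m)$, with no proof given, and the narrative immediately continues to the definition of the reduced shape algebra. So there is no ``paper's own proof'' to compare against.

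That said, your sketch is the standard straightening proof of this classical fact (as in, e.g., Towber, Doubilet--Rota--Stein, or the treatment in Fulton's \emph{Young Tableaux}). The three steps---define $\Phi$, check Pl\"ucker relations lie in the kernel, then straighten non-semistandard monomials modulo $\mathcal{P}$ to semistandard ones and conclude by the known basis---are exactly the right architecture. Your assessment of where the work lies is also accurate: the delicate point is choosing the monomial order so that every exchange term appearing in the Pl\"ucker relation applied at a violation is strictly smaller. The usual choice (reading the column word and comparing lexicographically, or equivalently comparing the sequence of column contents as multisets in a suitable order) does work, but you should be explicit about which violation you attack (the leftmost pair of adjacent columns with a violation, and within it the topmost row where $t_{i,j}>t_{i,j+1}$) and which Pl\"ucker relation you use (exchange the top $i$ entries of column $j+1$ with all $i$-subsets of column $j$). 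With that specification the decrease is routine to check.

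One small wording issue: in your first step you say $\Phi$ is defined on products of columns ``arranged by non-increasing heights''. You should make clear that an arbitrary monomial in $Sym^\bullet(\wedge V)$ already has a well-defined image regardless of ordering, since $Sym^\bullet(\wedge V)$ is commutative; the point is rather that the image lands in the summand $\mathbb{S}^\lambda$ corresponding to the shape obtained by sorting the column heights.
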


This defines a structure of associative and commutative algebra on the space $\mathbb S^\bullet$. We call this space the shape algebra of $\mathfrak{sl}(m)$.\\


\subsection{Diamond cone for $\mathfrak{sl}(m)$ and quasistandard tableaux}


\

Let us now put:

\begin{defn}

\

We call reduced shape algebra or diamond cone the quotient:
$$
\mathbb{S}^\bullet_{red}=\mathbb{S}^\bullet/\big<v_\lambda-1~~ \big >.
$$
\end{defn}

\

This algebra is a natural $\mathfrak n^+$ module, where $\mathfrak n^+$ is the Lie algebra of strictly upper triangular $m\times m$ matrices. This module is indecomposable, if $p$ is the canonical projection map from $\mathbb S^\bullet$ to $\mathbb S_{red}^\bullet$, the restriction of $p$ to any $\mathbb S^\lambda$ is an isomorphism of $\mathfrak n^+$ module but we have $p(\mathbb S^\mu)\subset p(\mathbb S^\lambda)$ if $\mu\leq\lambda$, {\sl i.e.} if $\lambda=(a_1,\dots,a_{m-1})$ and $\mu=(b_1,\dots,b_{m-1})$, $b_k\leq a_k$ for any $k$ (see \cite{ABW}).\\

In order to build a basis for $\mathbb{S}^\bullet_{red}$, well adapted to the above stratification, a notion of quasistandard Young tableau for $\mathfrak{sl}(m)$ was introduced. Let us first recall this notion in a presentation usefull for the next sections.\\

Let $\nu=(c_1,\dots,c_h,0,\dots,0)$ be a shape. We call trivial tableau with shape $\nu$ for $\mathfrak{sl}(m)$ and denote $trivial(\nu)$ the tableau with shape $\nu$ and entries $tr_{i,j}\in\{1,2,\dots,m\}$ such that:
$$
tr_{i,j}=i,\qquad\text{ for all }\quad i=1,\dots,h,~~j=1,\dots,\ell_i=\sum_{k\geq i}c_k.
$$

That means we fill up the shape $\nu$, row by row, with the number of the row.\\

Let $T$ be a semistandard Young tableau, with shape $\lambda$ and entries $t_{i,j}$.

If $\nu\leq \lambda$, we say that the trivial tableau with shape $\nu$ is extractable from $T$ if
\begin{itemize}
\item[1.] The subtableau, with shape $\nu$, placed on the top and left in $T$ is trivial, or:
$$
t_{i,j}=i,\qquad\text{ for all }\quad i=1,\dots,h,~~j=1,\dots,\ell_i.
$$
\item[2.] It is possible to extract this subtableau from $T$ by pushing each row of $T$, with number $i\leq h$, $\ell_i$ steps from the right to the left, getting a new semistandard tableau or (with our convention): for all $i$, and all $j$,
$$
t_{i,j+\sum_{k\geq i}c_k}<t_{i+1,j+\sum_{k\geq i+1}c_k}.
$$
This is equivalent to
\item[2'.] For all $i$, and all $j\geq\sum_{k>i}c_k$,
$$
t_{i,j+c_i}<t_{i+1,j}.
$$
\end{itemize}

\begin{defn}\label{qasistandardsl(m)}

\

Let $T$ be a semistandard tableau with shape $\lambda$ ($T\in SS^\lambda$). If there is no trivial extractable tableau in $T$ (except the empty tableau with shape $(0,_dots,0)$), we say that $T$ is a quasistandard tableau. Denote $QS^\lambda$ the set of all quasistandard tableaux with shape $\lambda$.\\
\end{defn}

It is shown in \cite{ABW} that for any $T$ in $SS^\lambda$, there is a greatest trivial extractable subtableau $S$ in $T$, with shape $\nu\leq\lambda$. We get thus, as above, a new semistandard tableau, denoted $push(T)$, by extracting $S$. In fact, $push(T)$ is quasistandard, with shape $\mu=\lambda-\nu$.

For instance, if $m=4$, the tableau
$$
T=\begin{array}{l}
\begin{array}{|c|c|c|c|c|}
\hline
1&1&2&2&3\\
\hline
\end{array}\\
\begin{array}{|c|c|c|}
2&3&4\\
\hline
\end{array}\\
\begin{array}{|c|}
4\\
\hline
\end{array}\\
\end{array}
$$
is in $SS^{(2,2,1)}$ and
$$
push(T)=push\left(\begin{array}{l}
\begin{array}{|c|c|c|c|c|}
\hline
1&1&2&2&3\\
\hline
\end{array}\\
\begin{array}{|c|c|c|}
2&3&4\\
\hline
\end{array}\\
\begin{array}{|c|}
4\\
\hline
\end{array}\\
\end{array}\right)=\begin{array}{l}
\begin{array}{|c|c|c|}
\hline
2&2&3\\
\hline
\end{array}\\
\begin{array}{|c|c|}
3&4\\
\hline
\end{array}\\
\begin{array}{|c|}
4\\
\hline
\end{array}\\
\end{array}
$$
is in $QS^{(1,1,1)}$.\\

Morover, the map $push$ is bijective from $SS^\lambda$ onto the disjoint union $\bigsqcup_{\mu\leq \lambda}QS^\mu$. And it is proved in \cite{ABW} that

\begin{thm}

\

The family $p(e_T)$ for $T$ quasistandard is a basis for the reduced shape algebra $\mathbb S_{red}^\bullet$ of $\mathfrak{sl}(m)$ well adapted to the stratification $p(\mathbb S^\mu)\subset p(\mathbb S^\lambda)$ if $\mu\leq\lambda$.
\end{thm}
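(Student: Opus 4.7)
The plan has three parts: first establishing a spanning statement, then using a dimension count to upgrade it to a basis, and finally checking compatibility with the stratification. The preliminary ingredients are the bijection $push : SS^\lambda \to \bigsqcup_{\mu \leq \lambda} QS^\mu$, the vector space isomorphism $p|_{\mathbb S^\lambda} : \mathbb S^\lambda \to p(\mathbb S^\lambda)$, and the identity $p(v_\sigma) = 1$ in $\mathbb S_{red}^\bullet$ valid for every shape $\sigma$.

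The spanning step is to show that $\{p(e_Q) : Q \in QS^\mu,\ \mu \leq \lambda\}$ spans $p(\mathbb S^\lambda)$ for every $\lambda$. I would proceed by double induction on $(|\lambda|, \kappa(T))$ ordered lexicographically, where $\kappa(T)$ denotes the size of the shape $\nu$ of the greatest extractable trivial subtableau of $T$ (so $\kappa(T) = 0$ iff $T$ is quasistandard). The base case where $T$ is quasistandard is immediate. Otherwise, let $h$ be the largest row index with $c_h > 0$ in $\nu$, and let $T''$ be the semistandard tableau of shape $\lambda - \omega_h$ obtained from $T$ by pushing rows $1,\dots,h$ one step to the left (this push preserves semistandardness because $\nu$ is extractable and $\omega_h \leq \nu$). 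The Plücker relations of Theorem \ref{shapeslm}, applied to straighten the product $v_{\omega_h} \cdot e_{T''} = e_{\{1,\ldots,h\}} \cdot e_{T''}$ into the semistandard basis, yield an identity in $\mathbb S^\lambda$ of the form
$$
v_{\omega_h} \cdot e_{T''} = \pm\, e_T + \sum_i \varepsilon_i\, e_{T_i},
$$
with signs $\varepsilon_i$ and correction terms $T_i \in SS^\lambda$ satisfying $\kappa(T_i) < \kappa(T)$. Projecting to $\mathbb S_{red}^\bullet$ and using $p(v_{\omega_h}) = 1$, one can solve for $p(e_T)$ as a linear combination of $p(e_{T''})$ (whose shape has strictly smaller $|\lambda|$) and the $p(e_{T_i})$ (same $|\lambda|$, strictly smaller $\kappa$). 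Both kinds of terms lie in the desired span by the inductive hypothesis, hence so does $p(e_T)$.

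For the dimension count, the bijection $push$ yields $|SS^\lambda| = \sum_{\mu \leq \lambda} |QS^\mu|$, and the isomorphism $p|_{\mathbb S^\lambda}$ gives $\dim p(\mathbb S^\lambda) = |SS^\lambda|$. The spanning set produced above has cardinality $\sum_{\mu \leq \lambda} |QS^\mu| = \dim p(\mathbb S^\lambda)$, so it is automatically a basis of $p(\mathbb S^\lambda)$. Passing to the direct limit $\mathbb S_{red}^\bullet = \bigcup_\lambda p(\mathbb S^\lambda)$, the family $\{p(e_Q) : Q \text{ quasistandard}\}$ is a basis of $\mathbb S_{red}^\bullet$. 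Compatibility with the stratification is automatic: for $\mu \leq \lambda$, the basis of $p(\mathbb S^\mu)$ is precisely the sub-family indexed by $QS^{\mu'}$ with $\mu' \leq \mu$, which is contained in the basis of $p(\mathbb S^\lambda)$.

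The main obstacle is justifying the Plücker straightening claim above: that when $v_{\omega_h} \cdot e_{T''}$ is expanded in the semistandard basis, $e_T$ occurs with coefficient $\pm 1$ and every other term has strictly smaller $\kappa$-complexity than $T$. The presence of $e_T$ is natural (applying the Plücker exchange between the column $\{1,\dots,h\}$ coming from $v_{\omega_h}$ and the leftmost column of $T''$ undoes the push and reconstitutes the original top-left trivial subtableau of $T$), but showing that the other exchange terms strictly decrease $\kappa$ requires exploiting the \emph{maximality} of $\nu$ coming from the definition of $push$. The careful combinatorial bookkeeping that makes this rigorous is the technical heart of the result in [ABW].
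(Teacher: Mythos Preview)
Your overall plan is sound, but you have made the spanning step unnecessarily hard, and the ``main obstacle'' you identify is in fact a non-obstacle. The paper does not give its own proof of this $\mathfrak{sl}(m)$ statement (it simply cites [ABW]), but it proves the $\mathfrak{sl}(m,n)$ analogue in Section~9 by an argument that transfers verbatim to $\mathfrak{sl}(m)$ and is much shorter.

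The point you are missing is that once the $\star$ product on tableaux is set up (Section~7), one has $S^0_\nu \star U = pull^{\nu}(U)$ for any semistandard $U$ and any shape $\nu$: the row-by-row juxtaposition with a trivial tableau is already semistandard and requires \emph{no} Pl\"ucker straightening. In particular, taking $\nu = \lambda - \mu$ and $U = push(T)$ gives $v_{\lambda-\mu} \star e_{push(T)} = e_T$ exactly, with zero correction terms. Projecting yields $p(e_T) = p(e_{push(T)})$ in one line, and spanning follows immediately. Your induction on $(|\lambda|, \kappa(T))$, the decomposition $v_{\omega_h}\cdot e_{T''} = \pm e_T + \sum_i \varepsilon_i e_{T_i}$, and the whole discussion of bounding $\kappa(T_i)$ are therefore superfluous --- there are no $T_i$ to control. (The paper does acknowledge the Pl\"ucker-based route, which is closer to what [ABW] actually does, in the Remark preceding the theorem in Section~9, but then explicitly opts for this direct proof.)

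Your dimension count for linear independence is equivalent to the paper's argument: the paper pulls a putative relation $\sum a_j p(e_{U_j}) = 0$ back to $\mathbb S^\lambda$ via $T_j = pull^{\lambda - \mu_j}(U_j)$, notes the $T_j$ are distinct semistandard tableaux, and invokes injectivity of $p|_{\mathbb S^\lambda}$. This is precisely the cardinality match underlying your counting argument, written out element-wise.
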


We call this basis the diamond cone for $\mathfrak{sl}(m)$.\\

\begin{exple}

\

For $m=3$, we get the following picture of a part of the diamond cone:
\eject

\

\vskip5cm
\begin{center}
\begin{picture}(180,80)(200,20)

\path(175,150)(335,150) \path(335,150)(415,30) \path(255,30)(415,30)
\path(175,150)(255,30) \path(215,90)(375,90) \path(335,30)(255,150)
\path(295,90)(255,150) \path(375,90)(335,30) \path(295,90)(335,30)
\path(295,90)(335,150) \path(295,90)(335,150) \path(255,150)(295,90)
\path(295,90)(255,30) \path(255,150)(215,90) \path(145,200)(175,150)
\path(410,30)(450,30)

\put(175,150){\circle{3}} \put(335,150){\circle{3}}
\put(415,30){\circle{3}} \put(255,30){\circle{3}}
\put(215,90){\circle{3}} \put(375,90){\circle{3}}
\put(335,30){\circle{3}} \put(295,90){\circle{3}}
\put(255,150){\circle{3}} \put(375,90){\circle{3}}

\put(194,74) {\renewcommand{\arraystretch}{0.7}$\begin{array}{l}
\framebox{$1$}\\
\framebox{$3$}\\
\end{array}$}

\put(282,72){\renewcommand{\arraystretch}{0.7}$\begin{array}{l}
\framebox{$2$}\\
\framebox{$3$}\\
\end{array}$}

\put(228,165){\renewcommand{\arraystretch}{0.7}$\begin{array}{l}
\framebox{$1$}\framebox{$3$}\\
\framebox{$3$}\\
\end{array}$}

\put(255,165){\renewcommand{\arraystretch}{0.7}$\begin{array}{l}
\framebox{$1$}\framebox{$2$}\\
\framebox{$3$}\framebox{$3$}\\
\end{array}$}

\put(138,148){\renewcommand{\arraystretch}{0.7}$\begin{array}{l}
\framebox{$1$}\framebox{$1$}\\
\framebox{$3$}\framebox{$3$}\\
\end{array}$}

\put(337,150){\renewcommand{\arraystretch}{0.7}$\begin{array}{l}
\framebox{$2$}\framebox{$2$}\\
\framebox{$3$}\framebox{$3$}\\
\end{array}$}

\put(311,164){\renewcommand{\arraystretch}{0.7}$\begin{array}{l}
\framebox{$2$}\framebox{$3$}\\
\framebox{$3$}\\
\end{array}$}

\put(309,137){\renewcommand{\arraystretch}{0.7}$\begin{array}{l}
\framebox{$3$}\framebox{$3$}\\
\end{array}$}

\put(328,17){\framebox{$2$}}

\put(290,97){\framebox{$3$}}

\put(402,17){\renewcommand{\arraystretch}{0.7}$\begin{array}{l}
\framebox{$2$}\framebox{$2$}\\
\end{array}$}

\put(374,95){\renewcommand{\arraystretch}{0.7}$\begin{array}{l}
\framebox{$2$}\framebox{$2$}\\
\framebox{$3$}\\
\end{array}$}

\put(345,78){\renewcommand{\arraystretch}{0.7}$\begin{array}{l}
\framebox{$2$}\framebox{$3$}\\
\end{array}$}

\put(251,20){$0$}

\end{picture}
\end{center}
\end{exple}
\vskip1cm


\section{The special linear Lie superalgebra $\mathfrak{sl}(m,n)$}


\
Let us recall some basic definitions from Lie superalgebra theory (\cite{V}).\\

A Lie superalgebra $\mathfrak g=\mathfrak g_{\overline{0}} \oplus\mathfrak g_{\overline{1}}$ is a $\mathbb{Z}_2$-graded algebra with product $[.,.]$, {\sl i.e.} if $a$ is in $\mathfrak g _\alpha$, $b$ in $\mathfrak g_\beta$,($\alpha, \beta \in \mathbb{Z}_2=\{\overline{0},\overline{1}\}$)
then $[a,b]$ is in $\mathfrak g_{\alpha+\beta}$. The bracket satisfies the following axioms:
$$
[a,b]= -(-1)^{\alpha \beta} [b,a];\qquad\qquad~~ [a,[b,c]]=[[a,b],c]+ (-1)^{\alpha \beta}[b,[a,c]],~~\forall c \in \mathfrak g.
$$

In this paper, we are interested on the case $\mathfrak g=\mathfrak{sl}(m,n)$, with $m,n \in \mathbb{N}$. Recall that $\mathfrak{sl}(m,n)$ can be viewed as the set of all $(m+n)^2$ matrices $X=\left(\begin{array}{cc}A&B\\C&D\\
\end{array}\right)$ over the complex number field $\mathbb{C}$ where $A$ is a ${m \times m}$ matrix, $B$ a ${m \times n}$ one, $C$ a ${n \times m}$ one, $D$ a ${n \times n}$ one, and $str(X)=trA-trD=0$. The even part $\mathfrak g_{\overline{0}}$ of $\mathfrak g$ consists of matrices of the form $\left(\begin{array}{cc}A&0\\0&D\\
\end{array}\right)$, the odd part $\mathfrak g_{\overline{1}}$ of $\mathfrak g$ consists of matrices of the form $\left(\begin{array}{cc}0&B\\C&0\\
\end{array}\right)$ and $\mathfrak g_{\overline{0}}\cong \mathfrak{sl}(m)\oplus \mathfrak{sl}(n)\oplus\mathbb{C}$. 

It is known (see \cite{K}) that the center $\mathfrak z$ of $\mathfrak{sl}(m,n)$ is trivial if $m\neq n$, one dimensional if $m=n$, and the quotient $\mathfrak{sl}(m,n)/\mathfrak z$ is always simple.

Let
$$
\mathfrak h=\{diag(h_1,\ldots,h_{m+n}),\quad h_s \in \mathbb{C},\quad 1 \leq s \leq m+n,\quad\text{and } \displaystyle\sum_{i=1}^{m}h_i=\displaystyle\sum_{j=1}^{n}h_{m+j}\}.
$$
Then $\mathfrak h$ is a Cartan subalgebra of $\mathfrak g$. The corresponding root system will be denoted by $\Delta$. Put $\epsilon_s(diag(h_1,\ldots,h_{m+n})=h_s$. The roots can be expressed in terms of linear functions $\epsilon_1$,\ldots,$\epsilon_m$, $\delta_1=\epsilon_{m+1}$,\ldots, $\delta_n=\epsilon_{m+n}$. Let $\Delta_{\overline{0}}$ be the set of even roots, let $\Delta_{\overline{1}}$ be the set of odd
roots, then
$$
\Delta_{\overline{0}}=\{\epsilon_i-\epsilon_j, \delta_i-\delta_j,  i \neq j \}~~\hbox{and}~~
\Delta_{\overline{1}}=\{\pm (\epsilon_i-\delta_j)\}.
$$
We choose
$$
\Pi=\{\epsilon_1-\epsilon_2,\epsilon_2-\epsilon_3,\ldots,\epsilon_m-\delta_1,\delta_1-\delta_2,\ldots,\delta_{n-1}-\delta_n\}
$$
as a simple root system.\\

The corresponding nilpotent factor in $\mathfrak{sl}(m,n)$ is:
$$
\mathfrak n^+=\left\{X=\left(\begin{matrix}A&B\\0&D\end{matrix}\right),\quad A,D\text{ strictly upper triangular }\right\}.
$$

The weight space $\mathfrak h^\star$ is spanned by $\epsilon_i$  and $\delta_j$. So, a weight $\lambda \in \mathfrak h^\star$ can be written as
$$\lambda=\sum_{i=1}^m
\lambda_i \epsilon_i + \sum_{j=1}^n \mu_j\delta_j~\hbox{ with }~ \sum_{i=1}^m
\lambda_i -\sum_{j=1}^n \mu_j=0.
$$
Following \cite{V}, we put $a_i=\lambda_i-\lambda_{i+1}$ for $i<m$, $a_m=\lambda_m+\mu_n$ and $a'_j=\mu_j-\mu_{j+1}$ for $j<n$. A weight $\lambda$ is called integral dominant if $a_i \in \mathbb{N}$ for $i\neq m$, whereas $a_m\in \mathbb{Z}$. The set of integral dominant weights is denoted by $\Lambda$.\\

We just denote elements in $\Lambda$ by $\lambda=(a,a')=((a_1,\dots,a_m),(a'_1,\dots,a'_{n-1}))$. Remark the ordering on $\Lambda$ is defined by $\mu=(b,b')\leq\lambda=(a,a')$ if and only if $\lambda-\mu$ is dominant, if and only if $b_i\leq a_i$ for $i<m$ and $b'_j\leq a'_j$ for $j<n$.\\ 

Let $\lambda$ be dominant. From the theory of reductive Lie algebras, it follows that there exists a unique finite-dimensional irreducible
$\mathfrak{sl}(m,n)_{\overline{0}}$ module $V_0(\lambda)$ with highest weight $\lambda$. Let $v_\lambda$ be a highest weight vector for
$V_0(\lambda)$.\\

\begin{defn}

\

For any $\lambda \in \Lambda$, the Kac module $\overline{V}(\lambda)$ is the induced module
$$
\overline{V}(\lambda)=Ind^{\mathfrak{sl}(m,n)}_{\mathfrak{sl}(m,n)_{\overline{0}}~
\oplus \mathfrak n^+_{\overline{1}}} V_0(\lambda)=
U(\mathfrak{sl}(m,n)) \otimes _{\mathfrak{sl}(m,n)_{\overline{0}}
~\oplus \mathfrak n^+_{\overline{1}}}{ V_0(\lambda)},
$$
where $U(\mathfrak{sl}(m,n))$ is the universal enveloping superalgebra of $\mathfrak{sl}(m,n)$.\\
\end{defn}

$\overline{V}(\lambda)$ is a finite-dimensional $\mathfrak{sl}(m,n)$ module of dimension $2^{mn} \times dim \big(V_0(\lambda)\big)$. Unfortunately, $\overline{V}(\lambda)$ is not always an irreducible module. Since $\overline{V}(\lambda)$ is a
highest weight module, it contains a unique maximal submodule $M(\lambda)$ such that the quotient module:
$$
\overline{V}(\lambda)/ M(\lambda)
$$
is a finite dimensional simple $\mathfrak{sl}(m,n)$ module with highest weight $\lambda$.\\

\begin{defn}

\ 

For any $\lambda$ in $\Lambda$, we denote the unique simple $\mathfrak{sl}(m,n)$-module with highest weight $\lambda$ by
$$
V(\lambda)=\mathbb S^{\lambda}=\overline{V}(\lambda)/ M(\lambda).
$$

\end{defn}
Kac (\cite{K}) proved the following result:

\begin{thm}

\

Every finite-dimensional simple $\mathfrak{sl}(m,n)$ module is
isomorphic to a module of the type
$V(\lambda)$ where $\lambda$ is
an integral dominant weight.

Moreover, any finite-dimensional irreducible $\mathfrak{sl}(m,n)$ module is uniquely characterized by its integral dominant weight $\lambda$.\\
\end{thm}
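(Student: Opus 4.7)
I would follow the classical highest-weight strategy in three steps: (i) extract a highest-weight vector from an arbitrary finite-dimensional simple $\mathfrak{sl}(m,n)$-module $W$; (ii) use the universal property of the Kac module to realize $W$ as its simple quotient; (iii) deduce uniqueness from the highest weight being an isomorphism invariant.

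For step (i), I would restrict $W$ to the Cartan $\mathfrak h \subset \mathfrak{sl}(m,n)_{\overline 0}$. Since $\mathfrak h$ is abelian and acts semisimply on the finite-dimensional $W$, one has a weight decomposition $W = \bigoplus_\nu W_\nu$. Choose $\lambda$ maximal for the root order $\mu \leq \nu \Leftrightarrow \nu - \mu \in \mathbb N\Pi$; then any nonzero $v_\lambda \in W_\lambda$ is annihilated by $\mathfrak n^+ = \mathfrak n^+_{\overline 0} \oplus \mathfrak n^+_{\overline 1}$. Restricting the $\mathfrak{sl}(m,n)_{\overline 0}$-submodule $U(\mathfrak{sl}(m,n)_{\overline 0}) \cdot v_\lambda$ to the factors $\mathfrak{sl}(m)$ and $\mathfrak{sl}(n)$ and invoking the classical theory of semisimple Lie algebras forces $a_i \in \mathbb N$ for $i < m$ and $a'_j \in \mathbb N$ for $j < n$; integrality of $a_m$ follows from the integrality of the $\mathfrak h$-weights. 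Hence $\lambda \in \Lambda$, and being a finite-dimensional cyclic highest-weight module for the reductive $\mathfrak{sl}(m,n)_{\overline 0}$ with integral dominant highest weight, $U(\mathfrak{sl}(m,n)_{\overline 0}) \cdot v_\lambda \cong V_0(\lambda)$.

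For step (ii), I would note that $\mathfrak n^+_{\overline 1}$ is an abelian ideal in $\mathfrak{sl}(m,n)_{\overline 0} \oplus \mathfrak n^+_{\overline 1}$ (one checks that it is abelian by direct matrix computation, the upper-right blocks squaring to zero), so letting $\mathfrak n^+_{\overline 1}$ act by zero extends the $V_0(\lambda)$-action consistently to $\mathfrak{sl}(m,n)_{\overline 0} \oplus \mathfrak n^+_{\overline 1}$; this extension matches the induced action on $U(\mathfrak{sl}(m,n)_{\overline 0}) \cdot v_\lambda \subset W$. Frobenius reciprocity (the universal property of induction) then yields a $\mathfrak{sl}(m,n)$-module homomorphism $\varphi : \overline V(\lambda) \to W$ sending $1 \otimes v_\lambda$ to $v_\lambda$. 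Its image is a nonzero submodule of the simple module $W$, hence equals $W$; its kernel is a proper submodule of $\overline V(\lambda)$, so $W$ is a simple quotient of $\overline V(\lambda)$. Since $V(\lambda) = \overline V(\lambda)/M(\lambda)$ is by definition the unique simple quotient, $W \cong V(\lambda)$.

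For step (iii), the PBW theorem gives $\overline V(\lambda) \cong \Lambda(\mathfrak n^-_{\overline 1}) \otimes V_0(\lambda)$ as $\mathfrak h$-modules; hence every weight of $\overline V(\lambda)$, and therefore of $V(\lambda)$, is $\leq \lambda$ with one-dimensional $\lambda$-weight space. Thus $\lambda$ is an isomorphism invariant of $V(\lambda)$, and $V(\lambda) \cong V(\mu)$ forces $\lambda = \mu$. The main obstacle is the uniqueness of the maximal submodule $M(\lambda)$ needed in step (ii): one must show that the sum of all proper submodules of $\overline V(\lambda)$ remains proper, which reduces to observing that any proper submodule misses the one-dimensional weight space $\overline V(\lambda)_\lambda$ (otherwise it would contain the cyclic generator $1 \otimes v_\lambda$ and hence all of $\overline V(\lambda)$); this guarantees the existence of a unique maximal submodule and makes $V(\lambda)$ well-defined.
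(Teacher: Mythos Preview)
The paper does not prove this theorem; it simply attributes the result to Kac with a citation to \cite{K} and states it without proof. So there is no ``paper's own proof'' to compare against.

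Your argument is the standard highest-weight classification and is essentially correct. Two small remarks. First, the assertion that $\mathfrak h$ acts semisimply on a finite-dimensional $W$ is not automatic from $\mathfrak h$ being abelian; rather, the Cartan of the semisimple part $\mathfrak{sl}(m)\oplus\mathfrak{sl}(n)\subset\mathfrak g_{\overline 0}$ acts semisimply by Weyl's theorem, and the one-dimensional center of $\mathfrak g_{\overline 0}$ acts by a scalar on the simple module $W$ by Schur's lemma, so together $\mathfrak h$ is diagonalizable. Second, your justification that $a_m\in\mathbb Z$ (``integrality of the $\mathfrak h$-weights'') is vague: the odd simple root $\epsilon_m-\delta_1$ imposes no $\mathfrak{sl}(2)$-style integrality constraint, and in Kac's original classification $a_m$ can in fact be an arbitrary complex number for general finite-dimensional simple $\mathfrak{sl}(m,n)$-modules. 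The restriction $a_m\in\mathbb Z$ in the paper's definition of $\Lambda$ is therefore a narrowing of scope rather than a consequence you can derive; this is a wrinkle in the paper's formulation, not a flaw in your method.
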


Put $\rho=\frac{1}{2}\sum_{\alpha \in\Delta^{+}_{\overline{0}}} \alpha- \frac{1}{2}\sum_{\beta \in
\Delta^{+}_{\overline{1}}} \beta$, where $\Delta^{+}_{\overline{0}}$ (resp. $\Delta^{+}_{\overline{1}}$)is the set of even (resp. odd) roots.
Or explicitly in the $\epsilon\delta$-basis
$$
\rho= \frac{1}{2}\displaystyle\sum_{i=1}^m
(m-n-2i+1)\epsilon_i+\frac{1}{2}\displaystyle\sum_{j=1}^n
(m+n-2j+1)\delta_j.
$$
There is a symmetric form $(~,~)$ on $\mathfrak h^\star$ induced by the invariant symmetric form $(X,Y)\mapsto str(XY)$ on $\mathfrak{sl}(m,n)$, and in the natural basis it is the restriction to $\mathfrak h^\star$ of the form $(\epsilon_i,\epsilon_j)=\delta_{ij}$, $(\epsilon_i,\delta_j)=0$ and $(\delta_i,\delta_j)=-\delta_{ij}$, where $\delta_{ij}$ is the usual Kronecker symbol.\\

If $\lambda$ is a dominant weight of $\mathfrak{sl}(m,n)$ then $\lambda$ is said:
\begin{itemize}
\item[i)] typical if $(\lambda+\rho,\beta) \neq 0$,~~~~for all $\beta \in \Delta^+_{\overline{1}}$,\\
\item[ii)] atypical if there exist $\beta \in \Delta^+_{\overline{1}}$ such that $(\lambda+\rho,\beta) = 0$.
\end{itemize}

\begin{thm}

\

Let $\lambda$ be a dominant weight. The Kac module $\overline{V}(\lambda)$ is an irreducible $\mathfrak{sl}(m,n)$ module if and only if its highest weight $\lambda$ is typical.

In this case, we call $V(\lambda)=\overline{V}(\lambda)$ a typical module, otherwise $V(\lambda)\neq \overline{V}(\lambda)$ is called atypical module.\\
\end{thm}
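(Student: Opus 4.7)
The plan is to prove the two implications separately, using the concrete description of the Kac module coming from the PBW theorem. Writing $\mathfrak g_{-1}$ for the span of the negative odd root vectors $X_{-(\epsilon_i-\delta_j)}$, the definition as an induced module yields $\overline V(\lambda)\cong \Lambda(\mathfrak g_{-1})\otimes V_0(\lambda)$ as $\mathfrak{sl}(m,n)_{\overline 0}$-modules, with highest weight vector $v_\lambda$ in degree $0$; this immediately matches the dimension formula $2^{mn}\dim V_0(\lambda)$ recalled above.

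For the direction atypical $\Rightarrow$ reducible, assume that $\beta\in\Delta^+_{\overline 1}$ satisfies $(\lambda+\rho,\beta)=0$. Following Kac's original strategy, I would exhibit an explicit primitive vector $w\in\overline V(\lambda)$ of weight strictly below $\lambda$, written as a linear combination of monomials $X_{-\gamma_1}\cdots X_{-\gamma_k}\,v_\lambda$ with $\gamma_i\in\Delta^+_{\overline 1}$ whose total odd-root content equals $\beta$ modulo the even root lattice. The coefficient system is determined by demanding that every simple positive root vector $X_{\alpha_i}$ kill $w$; when one commutes $X_{\alpha_i}$ past the monomials, the residual Cartan action on $v_\lambda$ produces linear expressions in the quantities $(\lambda+\rho,\gamma)$, and it is precisely the atypicality relation $(\lambda+\rho,\beta)=0$ that forces these expressions to cancel. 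The resulting nonzero $w$ then generates a proper submodule $U(\mathfrak{sl}(m,n))\cdot w\subsetneq \overline V(\lambda)$, contradicting irreducibility.

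For the converse typical $\Rightarrow$ irreducible, the cleanest route is via the contravariant Shapovalov form. The Chevalley anti-involution $\sigma$ of $\mathfrak{sl}(m,n)$, swapping $X_\alpha\leftrightarrow X_{-\alpha}$ and fixing $\mathfrak h$, induces a symmetric bilinear form $\langle\cdot,\cdot\rangle$ on $\overline V(\lambda)$ by $\langle v_\lambda,v_\lambda\rangle=1$ and $\langle Xu,v\rangle=\langle u,\sigma(X)v\rangle$, and a standard argument identifies its radical with the unique maximal submodule $M(\lambda)$. Thus irreducibility is equivalent to non-degeneracy of the form on each weight space $\overline V(\lambda)_\mu$. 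The central step is Kac's determinant formula: organising the basis of $\Lambda(\mathfrak g_{-1})\otimes V_0(\lambda)$ by weight and tracking the action of $\sigma(X_{-\beta})=X_\beta$, one shows that $\det\langle\cdot,\cdot\rangle\!\restriction_{\overline V(\lambda)_\mu}$ factors as the product of an even Shapovalov determinant of $V_0(\lambda)$ (nonzero since $V_0(\lambda)$ is simple) with a product of powers of the scalars $(\lambda+\rho,\beta)$ indexed by $\beta\in\Delta^+_{\overline 1}$. Under typicality every odd factor is nonzero, so the determinant is nonzero on each weight space and $\overline V(\lambda)$ is simple.

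The main obstacle is the determinant factorisation in the last step: producing it rigorously requires a delicate induction on the filtration degree in $\Lambda(\mathfrak g_{-1})$, arranged so that each occurrence of an odd negative generator $X_{-\beta}$ contributes exactly one linear factor $(\lambda+\rho,\beta)$ to the determinant. The explicit construction of the primitive vector in the atypical direction is also non-trivial in general, but once the factorisation of the determinant is in hand, the existence of such a vector and a formula for it can be read off from the hyperplanes $(\lambda+\rho,\beta)=0$ along which the determinant drops rank.
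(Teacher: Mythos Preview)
The paper does not prove this theorem: it is stated in Section~3 purely as background, attributed implicitly to Kac (the preceding theorem is explicitly credited to \cite{K}, and this one is part of the same recapitulation of known representation theory of $\mathfrak{sl}(m,n)$). There is therefore no proof in the paper to compare against.

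Your sketch follows the standard route due to Kac: the PBW identification $\overline V(\lambda)\cong\Lambda(\mathfrak g_{-1})\otimes V_0(\lambda)$, construction of a primitive vector in the atypical case, and non-degeneracy of the contravariant form via a determinant factorisation in the typical case. As an outline this is correct and is essentially the argument one finds in \cite{K}. The one caveat is that you are honest about the ``main obstacle'': the determinant factorisation and the explicit primitive-vector construction are genuinely technical, and your proposal does not carry them out. So what you have written is a roadmap rather than a proof; for the purposes of this paper, which only quotes the result, that is already more than is required.
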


In this paper we are interested by the modules appearing in the tensor algebra $T(V)=\bigoplus_{N\geq0} V^{\otimes N}$, where $V=\mathbb C^{m,n}$ is the natural $\mathfrak{sl}(m,n)$ module. Let us recall the $\mathfrak{sl}(m,n)$ (left) action on $V^{\otimes N}$ is:
$$
X.(v_1\otimes\ldots\otimes v_N)=\sum_{i=1}^N(-1)^{|X|(|v_1|+\ldots+|v_{i-1}|)}(v_1\otimes\ldots\otimes Xv_i\otimes\dots\otimes v_N).
$$
We call these modules covariant tensorial modules, they were studied by \cite{BR,KW,S}. On the other hand, the group $S_N$ acts on the right by:
$$
(v_1\otimes\dots\otimes v_N)\cdot \sigma=\varepsilon_{|v|}(\sigma)(v_{\sigma(1)}\otimes\ldots\otimes v_{\sigma(N)}),
$$
where $\varepsilon_{|v|}(\sigma)$ is the sign of the permutation induced by $\sigma$ on the odd $v_i$.

These two actions commute : $X.(v\cdot\sigma)=(X.v)\cdot\sigma$. Berele and Regev prove:

\begin{thm}

\

All the covariant tensorial modules are completely reducible, among them, the simple modules are exactly the modules $V(\lambda)=\mathbb S^{(a,a')}$, with $a_m$ an integral positive number and $\tilde{a}_m=a_m-\sup\{j,~a'_j>0\}\geq0$.

We note $\Lambda_{cov}$ the set of such covariant dominant integral weights.\\
\end{thm}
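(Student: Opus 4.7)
My plan is to reprove the Berele--Regev theorem via a super analogue of Schur--Weyl duality, exploiting the commuting actions of $\mathfrak{sl}(m,n)$ and $S_N$ on $V^{\otimes N}$ that are already introduced in the excerpt. The starting point is to show that these two actions generate each other's commutants inside $\mathrm{End}(V^{\otimes N})$. The nontrivial direction is that the image of $U(\mathfrak{sl}(m,n))$ (equivalently, of $U(\mathfrak{gl}(m,n))$) is exactly the $S_N$-invariants of $\mathrm{End}(V^{\otimes N})$. I would derive this from the classical fact that $\mathrm{End}(V^{\otimes N})^{S_N}$ is spanned by the ``polarized'' symmetric tensors $X \mapsto X^{\otimes N}$ for $X \in \mathrm{End}(V)$, which, by differentiating at the identity, lie in the image of the enveloping algebra; the $\mathbb{Z}_2$-grading does not obstruct this argument provided one uses the sign convention on the right $S_N$-action recalled in the excerpt.

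Granting the duality, the double centralizer theorem yields a decomposition
$$
V^{\otimes N} \;\cong\; \bigoplus_{\lambda \vdash N} W_\lambda \otimes M^\lambda,
$$
where $M^\lambda$ runs over the simple $S_N$-modules (indexed by partitions of $N$) and $W_\lambda = V^{\otimes N} c_{St_\lambda}$ is either zero or simple as an $\mathfrak{sl}(m,n)$-module. In particular, complete reducibility of every covariant tensorial module is automatic. The remaining task is twofold: (i) determine for which shapes $\lambda$ the module $W_\lambda$ is nonzero, and (ii) identify its highest weight.

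For (ii), I would mimic the classical computation already used in the $\mathfrak{sl}(m)$ case in Section~2: the polyvector $e^{S^0_\lambda}$ obtained by filling the shape $\lambda = (a,a')$ column by column with the canonical basis elements $e_1,\dots,e_m,e_{m+1},\dots,e_{m+n}$ of $\mathbb C^{m,n}$ is fixed by the Young symmetrizer $c_{St_\lambda}$ up to a nonzero scalar. Using the super bracket of $\mathfrak{h}$ on the ordered tensor product, one checks that $e^{S^0_\lambda} c_{St_\lambda}$ has weight
$$
\sum_{i=1}^m \lambda_i\epsilon_i + \sum_{j=1}^n \mu_j\delta_j,
$$
with $\lambda_i$ the length of the $i$-th row in the top (column-strict) part and $\mu_j$ the length of the $j$-th row in the bottom (row-strict) part. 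Translating into the $(a_i,a'_j)$ coordinates of the excerpt gives exactly the stated dominant weight, and annihilation by $\mathfrak n^+$ is verified as in the classical case, using that each factor is skew-symmetric in the even entries of a column and symmetric in the odd entries of a row.

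For (i), which I expect to be the main obstacle, one must show that $W_\lambda \neq 0$ precisely when the shape $\lambda$ fits in the $(m,n)$-hook, i.e.\ $\tilde a_m = a_m - \sup\{j:a'_j>0\} \geq 0$. The ``only if'' direction follows from a pigeonhole/sign argument: if $\lambda$ has a column of height exceeding $m$ whose portion below row $m$ is shorter than prescribed, then the alternating sum forced by antisymmetry of the even slots combined with symmetrization along the rows that mix odd entries produces a contradiction with the $\mathbb Z_2$-Leibniz rule, killing $e^T c_{St_\lambda}$ for every filling $T$. Concretely, a column of height $>m$ must contain at least two equal odd basis vectors, which are symmetric among themselves while being required to be antisymmetric in the column, forcing zero unless the row lengths below are absorbed in the widened hook. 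The ``if'' direction is handled by exhibiting the explicit nonzero vector $e^{S^0_\lambda} c_{St_\lambda}$ constructed in step (ii). Combining these determines $\Lambda_{cov}$ exactly as stated, and the decomposition above then identifies the $V(\lambda)$ with $\mathbb S^{(a,a')}$, completing the proof.
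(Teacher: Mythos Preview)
The paper does not prove this theorem; it quotes it as a result of Berele--Regev (and Sergeev). Your overall strategy---establish the super Schur--Weyl duality, invoke the double centralizer theorem, then identify which $\lambda$ occur and with which highest weight---is exactly the Berele--Regev route, and steps (ii) and the ``if'' direction of (i) are fine in outline.

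The ``only if'' direction of (i), however, contains a genuine error. You write that a column of height $>m$ ``must contain at least two equal odd basis vectors, which are symmetric among themselves while being required to be antisymmetric in the column, forcing zero.'' With the sign convention recalled in the paper, the column antisymmetrizer $b_\lambda$ acts on $e^T$ via $\sum_\tau \varepsilon(\tau)\,\varepsilon_{|e|}(\tau)\,e^{T\tau}$. For a transposition of two odd entries one has $\varepsilon(\tau)=-1$ and $\varepsilon_{|e|}(\tau)=-1$, so the net sign is $+1$: \emph{odd vectors in a column are symmetric}, not antisymmetric, under $b_\lambda$. Repeating an odd entry in a column therefore does \emph{not} kill the tensor (indeed, the highest weight vector $v_\lambda$ itself has columns containing repeated odd entries $e_{m+j}$). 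Dually, it is the row symmetrizer $a_\lambda$ that is antisymmetric on odd entries.

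The actual vanishing for shapes outside the $(m,n)$--hook is subtler: one has to play the row and column operators against each other. When $\lambda_{m+1}\geq n+1$, the $(m+1)\times(n+1)$ rectangle in the upper left forces, after both $a_\lambda$ and $b_\lambda$, a simultaneous constraint (column-strict on the $\leq m$ entries, row-strict on the $>m$ entries) that cannot be met by entries in $\{1,\dots,m+n\}$; equivalently, there is no $(m,n)$--semistandard filling, and one checks that the Young symmetrizer annihilates every $e^T$. Filling in this argument is where the real work lies; your current sketch does not do it and rests on a sign mistake.
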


Denote $e_1,\dots,e_{m+n}$ the canonical basis for $V$, where $e_1,\dots, e_m$ are even ($|e_i|=0$), and $e_{m+1},\dots,e_{m+n}$ are odd ($|e_{m+j}|=1$). If $x,~y$ are homogeneous vectors in $V$, we put:
$$
x\cdot y=\frac{1}{2}(x\otimes y+(-1)^{|x||y|}y\otimes x)\qquad x\wedge y=\frac{1}{2}(x\otimes y-(-1)^{|x||y|}y\otimes x).
$$

Put $A'_j=a'_j+\dots+a'_{n-1}$, define $\{s_1<\dots<s_p\}$ as the set of $j$ sucht that $a'_j>0$ ($s_p=j_0$), define $s_0=0$ and the numbers $\tilde{a}_k$ as follows:
$$
\tilde{a}_k=0 ~\text{ if }~k\notin\{A'_{s_1}>\dots >A'_{s_p}\}~\text{ and }~\tilde{a}_{A'_{s_r}}=s_r-s_{r-1}.
$$
We choose the highest weight vector $v_\lambda$ ($\lambda\in\Lambda_{cov}$) in the space:
$$\aligned
&Sym^{\tilde{a}_1}(\wedge^{m+A'_1}V)\otimes\dots\otimes Sym^{\tilde{a}_{n-1}}(\wedge^{m+A'_{n-1}}V)\otimes Sym^{\tilde{a}_m}(\wedge^mV)\otimes\\
&\otimes Sym^{a_{m-1}}(\wedge^{m-1}V)\otimes\dots\otimes Sym^{a_1}(V).
\endaligned
$$

This choice is associated to the choice of a Young symmetrizer $c_\lambda$. We start with the vector:
$$\aligned
w&=(e_1\otimes\dots\otimes e_m\otimes e_{m+1}\otimes\dots\otimes e_{m+1})\otimes(e_1\otimes\dots\otimes e_m\otimes e_{m+2}\otimes\dots\otimes e_{m+2})\otimes\dots\\
&\hskip1cm \dots\otimes(e_1\otimes\dots\otimes e_m\otimes e_{m+n-1}\otimes\dots\otimes e_{m+n-1})\otimes(e_1\otimes\dots\otimes e_m)\otimes\dots\\
&\hskip1cm \dots\otimes(e_1)
\endaligned
$$
where the first $\tilde{a}_1$ parenthesis contain $A'_1$ vector $e_{m+i}$, the $\tilde{a}_2$ following parenthesis contain $A'_2$ vector $e_{m+i}$, and so on, there are $a_i$ parenthesis of the form $e_1\otimes\ldots\otimes e_i$. Then we define $v_\lambda$ as a multiple of $c_\lambda(w)$, for a good choice of $c_\lambda$, we get explicitely:

\begin{prop} (\cite{BR})\label{vlambda}

\

We keep all the preceding notations, and denote $G$ the group $S_1^{a'_1}\times S_2^{a'_2}\times\dots\times S_{n-1}^{a'_{n-1}}$. Then the simple module $V(\lambda)=\mathbb S^{(a,a')}$ is realized as the submodule in $T(V)$ generated by its highest weight vector:
$$\aligned
v_\lambda&=\alpha_\lambda\Big[\sum_{(\tau_{A'_1},\tau_{A'_1-1},\dots,\tau_1)\in G}\prod_{k=1}^{A'_1}\varepsilon(\tau_k)\left(e_1\wedge\dots\wedge e_m\wedge e_{m+\tau_1(1)}\wedge\dots\wedge e_{m+\tau_{A'_1}(1)}\right)\otimes\\
&\hskip1cm\otimes\left(e_1\wedge\dots\wedge e_m\wedge e_{m+\tau_1(2)}\wedge\dots\wedge e_{m+\tau_{A'_2}(2)}\right)\otimes\\
&\hskip1cm\ldots\otimes\left(e_1\wedge\dots\wedge e_m\wedge e_{m+\tau_1(n-1)}\wedge e_{m+\tau_{A'_{n-1}}(n-1)}\right)\Big]\otimes\\
&\hskip1cm\otimes\left(e_1\wedge\dots\wedge e_m\right)^{a_m-j_0}\otimes\left(e_1\wedge\dots\wedge e_{m-1}\right)^{a_{m-1}}\otimes\dots\otimes(e_1)^{a_1},
\endaligned
$$ 
where the numerical factor $\alpha_\lambda$ is:
$$
\alpha_\lambda=\prod_{i=1}^m A_i!\prod_{j=1}^{n-1}(m+A'_j)!(m!)^{a_m-j_0}\prod_{k=1}^{m-1}\left((m-k)!\right)^{a_k}.
$$
\end{prop}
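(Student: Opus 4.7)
The plan is to verify the explicit formula by unfolding the action of a super Young symmetrizer $c_\lambda = b_\lambda a_\lambda$ on the seed tensor $w$, where $b_\lambda$ is the column antisymmetrizer and $a_\lambda$ the row symmetrizer of the shape $(a,a')$, both acting on the right via the super convention $(v_1\otimes\dots\otimes v_N)\cdot\sigma = \varepsilon_{|v|}(\sigma)\,v_{\sigma(1)}\otimes\dots\otimes v_{\sigma(N)}$ recalled above. Since the shape decomposes into vertical blocks of columns of common height and horizontal blocks of rows of common length, both operators factor accordingly, and the computation reduces to analyzing one block at a time.

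I would first apply $b_\lambda$. A column of height $h\leq m$ has corresponding segment $e_1\otimes\dots\otimes e_h$ in $w$, and antisymmetrizes to $h!\,e_1\wedge\dots\wedge e_h$ in the usual exterior sense, supplying the factors $(m!)^{a_m-j_0}$ and $\prod_k((m-k)!)^{a_k}$ to $\alpha_\lambda$. A mixed column of height $m+A'_j$ has segment $e_1\otimes\dots\otimes e_m\otimes e_{m+1}\otimes\dots\otimes e_{m+j}$, the odd tail coming from the trivial row labels; the super sign convention turns the antisymmetrization of the odd tail into its supersymmetric counterpart, so the whole column yields $(m+A'_j)!\,e_1\wedge\dots\wedge e_m\wedge e_{m+1}\wedge\dots\wedge e_{m+j}$, contributing the remaining $\prod_j(m+A'_j)!$ to $\alpha_\lambda$.

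Next I would apply $a_\lambda$. On the first $m$ rows the trivial filling makes all entries of the $i$-th row equal to $i$, so $a_\lambda$ simply contributes $A_i!$ and leaves the column wedges intact. Below row $m$, each odd row meets a block of mixed columns whose bottom cells hold distinct odd vectors, so symmetrizing in the super sense permutes these odd labels across columns; since every transposition of two odd tensors costs a Koszul sign, the outcome is a sum over $\tau\in G=S_1^{a'_1}\times\dots\times S_{n-1}^{a'_{n-1}}$ weighted by $\prod_k\varepsilon(\tau_k)$. Combining both steps yields the displayed expression for $v_\lambda$; one then checks that it is annihilated by the positive simple root vectors of $\mathfrak{n}^+$ and carries weight $\lambda$, which confirms it is a highest weight vector of $\mathbb{S}^\lambda$. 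The delicate point throughout is the sign bookkeeping in the odd part: one must verify that the Koszul signs from the column antisymmetrizer on the odd tails combine with those produced by the row symmetrizer to give exactly $\prod_k\varepsilon(\tau_k)$, with each $\tau_k$ permuting the $k$-th odd entry across the appropriate range of mixed columns.
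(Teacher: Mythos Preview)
The paper does not supply its own proof of this proposition: it is stated with the attribution \cite{BR} and immediately followed by the remark that the value of $\alpha_\lambda$ will be used later, with no argument given. So there is no proof in the paper to compare your sketch against; what you have written is essentially the direct computation one would carry out to justify the formula, and the broad strategy (factor the symmetrizer according to the block structure of the shape, track the even and odd contributions separately, then check the highest-weight property) is the right one.

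One point of caution: you take $c_\lambda = b_\lambda a_\lambda$ and accordingly antisymmetrize columns first, then symmetrize rows. The paper, however, fixes the convention $c_\lambda = a_\lambda \cdot b_\lambda$ (stated explicitly in Section~\ref{semistand}), and with the right action this means $e^T\cdot c_\lambda = (e^T\cdot a_\lambda)\cdot b_\lambda$: row symmetrization is applied first. The displayed formula in the proposition is visibly organized that way, as a sum over the row group $G$ of products of column wedges. With your order the intermediate objects are different and the sign bookkeeping is more delicate, since after applying $b_\lambda$ the parity pattern at each tensor slot has been scrambled, so the super sign $\varepsilon_{|v|}(\rho)$ attached to a row permutation $\rho$ depends on which column permutation preceded it. Reordering to match the paper's convention makes your step ``on the first $m$ rows \dots $a_\lambda$ simply contributes $A_i!$'' literally correct and removes this complication. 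Apart from this, your outline is sound.
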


The value of the coefficient $\alpha_\lambda$ will be usefull in Section \ref{semistand}.

With the notation $A_i=a_i+\dots+a_m$, the highest weight $\lambda$ can be written, modulo the supertrace as:
$$
\lambda=\sum_{i=1}^m A_i\epsilon_i+\sum_{j=1}^{n-1}A'_j\delta_j.
$$


\section{Shape algebra}\label{shape}


\

As for the Lie algebra $\mathfrak{sl}(m)$, knowing the simple covariant tensorial modules $\mathbb S^\lambda$ for $\mathfrak{sl}(m,n)$, we can define a structure of commutative algebra on their direct sum. More precisely, we define the space:
$$
\mathbb S^\bullet=\bigoplus_{\lambda\in\Lambda_{cov}}\mathbb S^\lambda,
$$

To define the product on $\mathbb S^\bullet$, we need to consider its dual $(\mathbb S^\bullet)^\vee$ as a natural $\mathfrak{sl}(n,m)$-module.

First each $\mathbb S^\lambda$ is a finite dimensional $\mathfrak{gl}(m)\times\mathfrak{gl}(n)$-module, thus the corresponding group acts on $\mathbb S^\lambda$, we consider the element:
$$
w=\left(\begin{matrix}\left(\begin{matrix}0&&&&1\\&&&\cdot&\\&&\cdot&&\\&\cdot&&&\\1&&&&&0\end{matrix}\right)&0\\0&\left(\begin{matrix}0&&&&1\\&&&\cdot&\\&&\cdot&&\\&\cdot&&&\\1&&&&&0\end{matrix}\right)\end{matrix}\right)
$$
in $GL(m)\times GL(n)$. Put $\tilde{e}_i=e_{m+1-i}$, $\tilde{e}_{m+j}=e_{m+n+1-j}$. In $\mathbb S^\lambda$, there is the vector $\tilde{v}_\lambda=w\cdot v_\lambda$, which have the same expression as $v_\lambda$, but with the $\tilde{e}_k$ replacing the $e_k$. Its weight is:
$$
w(\lambda)=A'_{n-1}\delta_2+A'_{n-2}\delta_3+\dots+A'_1\delta_n+A_m\epsilon_1+A_{m-1}\epsilon_2+\dots+A_1\epsilon_m.
$$

Now the contragredient module $(\mathbb S^\lambda)^\vee$, defined by $(X,f)\mapsto -^tX\cdot f$, where $f$ belongs to the dual $(S^\lambda)^\star$ and $^tX\cdot$ is the supertranspose of the transformation $X\cdot$ on $\mathbb S^\lambda$, contains a vector with weight $-w(\lambda)$.

We consider this $\mathfrak{sl}(m,n)$-module as a $\mathfrak{sl}(n,m)$-module, by the identification of these two Lie superalgebras through:
$$
\left(\begin{matrix}A&B\\ C&D\end{matrix}\right)\longmapsto \left(\begin{matrix}D&C\\ B&A\end{matrix}\right),
$$
then the system of simple roots for $\mathfrak{sl}(n,m)$ is 
$$
\Pi^\vee=\{\delta_1-\delta_2,\ldots,\delta_{n-1}-\delta_n,\delta_n-\epsilon_1,\epsilon_1-\epsilon_2,\epsilon_2-\epsilon_3,\ldots, \epsilon_{m-1}-\epsilon_m\}.
$$
With this choice, $\lambda^\vee=-w(\lambda)$ becomes an integral dominant weight for $\mathfrak{sl}(n,m)$, since it can be written $\lambda^\vee=(a^\vee,a^{'\vee})$, where:
$$
a^\vee_j=a'_{n-j}\quad(1\leq j\leq n-1),\quad a^\vee_n=a_m-j_0,\quad a^{'\vee}_i=a_{m-i}\quad(1\leq i\leq m-1).
$$
We shall just write
$$
(\mathbb S^\lambda)^\vee=(\mathbb S_{m,n}^\lambda)^\vee\simeq\mathbb S_{n,m}^{\lambda^\vee}=\mathbb S^{\lambda^\vee}.
$$

Now, for each $\mu$, $\nu$ in $\Lambda_{cov}$ such that $\mu+\nu=\lambda$, since $\lambda\mapsto\lambda^\vee$ is linear, there is a natural morphism of $\mathfrak{sl}(n,m)$-modules:
$$
\Delta:(\mathbb S^\lambda)^\vee\longrightarrow (\mathbb S^\mu)^\vee\otimes (\mathbb S^\nu)^\vee.
$$
By transposition, we get a natural morphism of $\mathfrak{sl}(m,n)$-modules:
$$
\star:\mathbb S^\mu\otimes \mathbb S^\nu\longrightarrow \mathbb S^\lambda.
$$

\begin{prop}

\

The so defined product $\star$, on $\mathbb S^\bullet$, is commutative and associative, and satisfies:
\begin{itemize}
\item[1.] $\star$ is an intertwining operator from $\mathbb S^\lambda\otimes\mathbb S^\mu$ to $\mathbb S^{\lambda+\mu}$,
\item[2.] when we fix the highest weight vector $v_\lambda$ in $\mathbb S^\lambda$ as in the preceding section, $v_\lambda\star v_\mu=v_{\lambda+\mu}$.\\
\end{itemize}
\end{prop}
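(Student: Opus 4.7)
The plan is to deduce all four assertions from a single Schur-type uniqueness principle, after checking the non-vanishing of the defining comultiplication on the highest weight line.

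Item~1 is tautological: by construction $\Delta$ is a morphism of $\mathfrak{sl}(n,m)$-modules, so its supertranspose is a morphism of $\mathfrak{sl}(m,n)$-modules once the identification of these two superalgebras fixed in the text is applied. For item~2, I would observe that $v_\mu\otimes v_\nu$ has weight $\mu+\nu$ and is annihilated by $\mathfrak n^+$, since each factor is; hence, by item~1, $\star(v_\mu\otimes v_\nu)$ is a highest weight vector of weight $\mu+\nu$ in the simple module $\mathbb S^{\mu+\nu}$, whose highest-weight line is spanned by $v_{\mu+\nu}$. Thus $\star(v_\mu\otimes v_\nu)=c(\mu,\nu)\,v_{\mu+\nu}$ for some scalar. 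To fix this scalar once and for all, apply the Berele--Regev theorem to the covariant $\mathfrak{sl}(n,m)$-module $\mathbb S^{\mu^\vee}\otimes\mathbb S^{\nu^\vee}$: this is completely reducible, and since $\lambda^\vee=\mu^\vee+\nu^\vee$ is its top weight, $\mathbb S^{\lambda^\vee}$ occurs with multiplicity exactly one. Consequently the space of non-zero morphisms $(\mathbb S^\lambda)^\vee\to(\mathbb S^\mu)^\vee\otimes(\mathbb S^\nu)^\vee$ is one-dimensional; we normalize $\Delta$ so that $c(\mu,\nu)=1$.

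Commutativity and associativity then follow by the same multiplicity-one principle. Schur forces the space of $\mathfrak{sl}(m,n)$-intertwiners $\mathbb S^\mu\otimes\mathbb S^\nu\to\mathbb S^{\mu+\nu}$ to be one-dimensional, so any two such intertwiners that agree on $v_\mu\otimes v_\nu$ coincide. Composing $\star$ with the graded swap on $\mathbb S^\nu\otimes\mathbb S^\mu$ yields an intertwiner sending $v_\nu\otimes v_\mu$ to $v_{\mu+\nu}$ by item~2, and hence agrees with $\star:\mathbb S^\nu\otimes\mathbb S^\mu\to\mathbb S^{\mu+\nu}$. Associativity is established by the identical argument applied to the two natural triple products $\mathbb S^\lambda\otimes\mathbb S^\mu\otimes\mathbb S^\nu\to\mathbb S^{\lambda+\mu+\nu}$, both of which send $v_\lambda\otimes v_\mu\otimes v_\nu$ to $v_{\lambda+\mu+\nu}$ by iterating item~2.

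The only genuinely delicate step is the normalization $c(\mu,\nu)=1$: one must exhibit $\Delta$ as non-degenerate on the highest weight line, and then check that this choice is mutually consistent across all pairs $(\mu,\nu)$ so that associativity and commutativity hold on the nose rather than up to scalars. The non-degeneracy is provided by the Berele--Regev multiplicity-one statement; the consistency, i.e., the absence of a non-trivial 2-cocycle obstruction, is also guaranteed by multiplicity-one, since the one-dimensionality of the relevant $\mathrm{Hom}$ spaces leaves no room for independent rescaling. The explicit coefficient $\alpha_\lambda$ appearing in Proposition~\ref{vlambda} is precisely what makes this uniform normalization canonical.
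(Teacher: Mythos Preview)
Your argument is correct and matches the paper's own approach. The paper does not supply a detailed proof but observes, in the remark immediately following the proposition, that the $\mathbb S^{\lambda+\mu}$-isotypic component in $\mathbb S^\lambda\otimes\mathbb S^\mu$ is simple (generated by $v_\lambda\otimes v_\mu$), so properties~1 and~2 characterize $\star$ uniquely---exactly your multiplicity-one/Schur argument; your discussion of the normalization of $\Delta$ and of the $\alpha_\lambda$ makes explicit what the paper leaves to the word ``natural.''
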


Remark that $\mathbb S^\lambda\otimes\mathbb S^\mu$ being a submodule in $T(V)$, is completely reducible into a sum of simple modules isomorphic to some $\mathbb S^\nu$, with $\nu\leq \lambda+\mu$. However, the isotypic component corresponding to $\nu=\lambda+\mu$ is simple, generated by $v_\lambda\otimes v_\mu$. As a consequence, the two properties of this proposition characterize completely the multiplication $\star$.\\

\begin{defn}

\

The algebra $(\mathbb S^\bullet,\star)$ is called the shape algebra of $\mathfrak{sl}(m,n)$.\\
\end{defn}

Especially, for any $\lambda$, there is an injective morphism of $\mathfrak n^+$-module from $\mathbb S^\mu$ into $\mathbb S^{\lambda+\mu}$, given by:
$$
v\mapsto v_\lambda\star v.
$$

Conversely, suppose $\lambda$, $\mu$ are in $\Lambda_{cov}$, with $\mu\leq\lambda$.

If $\lambda-\mu$ is in $\Lambda_{cov}$, then $v\mapsto v_{\lambda-\mu}\star v$ is an injective morphism of $\mathfrak n^+$-modules from $\mathbb S^\mu$ into $\mathbb S^\lambda$.

Unfortunately, we can have $\lambda-\mu\notin\Lambda_{cov}$. Indeed if $\lambda=(a,a')$, and $\mu=(b,b')$, $\lambda-\mu=(a-b,a'-b')$ is integral dominant but we can have $a_m-b_m<\sup\{j,~a'_j>b'_j\}$. In this situation, we can define another injective map as follows.

Consider the weight $\eta=\sum_{i=1}^m\epsilon_i$. This weight is in $\Lambda_{cov}$, since $\eta=((0,\dots,1),0)$. There is a minimal natural number $k$ such that $\lambda-\mu+k\eta$ belongs to $\Lambda_{cov}$, namely:
$$
k=\sup(0,\sup\{j,~a'_j>b'_j\}-a_m+b_m).
$$

Then, as $\mathfrak n^+$-module, $\mathbb S^\lambda$ is isomorphic to $v_{k\eta}\star\mathbb S^\lambda$, and this module contains a $\mathfrak n^+$-module isomorphic to $\mathbb S^\mu$, namely $v_{\lambda-\mu+k\eta}\star\mathbb S^\mu$.\\

In the next section, we shall describe a linear basis for the shape algebra, given by semistandard Young tableaux.\\


\section{Semistandard tableaux for $\mathfrak{sl}(m,n)$}\label{semistand}


\subsection{Young tableaux}


\

A Ferrer diagram $F$ for $\mathfrak{gl}(m)$, is a juxtaposition of columns of empty boxes, the diagram is characterized by its shape $a=(a_1,\dots,a_m)$ where $a_i$ is the number of columns with height $i$.\\

The transpose of $F$ is the diagram $F^t$ having, for any $j$, $a_j$ rows with length $j$.\\

To build a diagram for $\mathfrak{sl}(m,n)$ (with $n\geq1$) we consider two Ferrer diagrams, one $F^+$ for $\mathfrak{gl}(m)$, with shape $a=(a_1,\dots,a_m)$ and one $F'$ for $\mathfrak{gl}(n-1)$ with shape $a'=(a'_1,\dots,a'_{n-1})$, we suppose the condition:
$$
a_m\geq \sup\{j,~a'_j\neq0\}\eqno{(*)}
$$
holds. Then, we first put (at the top) the diagram $F^+$, then a line at the separation between rows $m$ and $m+1$, finally the transpose $F^-$ of the diagram $F'$ under this line and at the left. We get a diagram $F=F^+\uplus F^-$ with shape:
$$
\lambda=(a,a')=((a_1,\dots,a_m),(a'_1,\dots,a'_{n-1})).
$$

Here is an example, for $\mathfrak{sl}(2,3)$: start with $F^+$ and $F'$:
$$
F^+=\begin{tabular}{|c|c|c|}
\hline
~~&~~&~~\\
\hline
~~&\multicolumn{1}{|c|}{~~}\\
\cline{1-2}
\end{tabular}\qquad
F'=\begin{tabular}{|c|c|c|c|c|}
\hline
~~&~~&~~&~~&~~\\
\hline
~~&~~&\\
\cline{1-3}
\end{tabular}
$$
Then the respective shapes are $a=(1,2)$, $a'=(2,3)$, the transpose of $F'$ is:
$$
F^-=(F')^t=\begin{tabular}{|c|c|}
\hline
~~&~~\\
\hline
~~&~~\\
\hline
~~&~~\\
\hline
\multicolumn{1}{|c|}{~~}\\
\cline{1-1}
\multicolumn{1}{|c|}{~~}\\
\cline{1-1}
\end{tabular}
$$
and the final Ferrer diagram is:
$$
F=F^+\uplus F^-=\begin{tabular}{c|c|c|c|c}
\cline{2-4}
\hskip 3cm&~~&~~&~~&\hskip 3cm\\
\cline{2-4}
&~~&~~&\multicolumn{1}{c}{~~}&\multicolumn{1}{c}{~~~~~~$_m$}\\
\hline
&~~&~~\\
\cline{2-3}
&~~&~~\\
\cline{2-3}
&~~&~~\\
\cline{2-3}
&\multicolumn{1}{|c|}{~~}\\
\cline{2-2}
&\multicolumn{1}{|c|}{~~}\\
\cline{2-2}
\end{tabular}
$$

We define a partial ordering on the shapes for diagrams by putting:
$$
\mu=(b,b')\leq \lambda=(a,a')
$$
if and only if:
$$
b_i\leq a_i, \quad\text{for any } i,\qquad b'_j\leq a'_j, \quad\text{for any } j.
$$
Let us remark that $\lambda-\mu$ is not always a shape of a Ferrer diagram, indeed, $\lambda-\mu$ is a shape if and only if $a_m-b_m\geq\sup\{j,~a'_j>b'_j\}$.

This relation defines an ordering on the set of $(m,n)$-shapes. This ordering corresponds to the restriction of the natural ordering on $\Lambda$.

%
%


The semistandard tableaux for $\mathfrak{sl}(m,n)$, are particular filling of Ferrer diagrams.\\


\subsection{Semistandard Young tableaux}


\begin{defn}\label{semistandard}

\

A semistandard tableau for $\mathfrak{sl}(m,n)$ is a filling of a Ferrer diagram with natural numbers $t_{i,j}$ (in the $(i,j)$-box) such that
\begin{itemize}
\item[1.] For every $i$, $j$, $t_{i,j}\leq n+m$,
\item[2.] Along each row, the $t_{i,j}$ are increasing from the left to the right, and strictly increasing if $t_{i,j}>m$:
$$
\text{For all}~~ j,~~t_{i,j}\leq t_{i,j+1},~~\text{and, if}~~t_{i,j}>m,~~\text{then}~~t_{i,j}<t_{i,j+1},
$$
\item[3.] Along each column,  the $t_{i,j}$ are increasing from the top to the bottom, and strictly increasing if $t_{i,j}\leq m$:
$$
\text{For all}~~ i,~~t_{i,j}\leq t_{i+1,j},~~\text{and, if }~~t_{i,j}\leq m,~~\text{then}~~t_{i,j}<t_{i+1,j}.
$$
\end{itemize}

In these relations and everywhere in this section, we use the following convention: if a member in an inequality does not exist, then the inequality holds.\\
\end{defn}

For instance the following tableaux $T$ and $T'$ are semistandard for $\mathfrak{sl}(2,3)$:
$$
T=\begin{array}{rlr}
~~~~~&\begin{array}{|c|c|c|}
\hline
1&1&2\\
\hline
\end{array}\\
&\begin{array}{|c|c|}
2&4\\
\end{array}&~~~~~~~~\\
\hline
&\begin{array}{|c|c|}
3&4\\
\hline
\end{array}&\\
&\begin{array}{|c|}
4\\
\hline
\end{array}&\\
\end{array}
\qquad\qquad T'=
\begin{array}{rlr}
~~~~~~~~&\begin{array}{|c|c|}
\hline
1&2\\
\hline
\end{array}&~~~~~~~~~~~\\
&\begin{array}{|c|}
4\\
\end{array}&\\
\hline
&\begin{array}{|c|}
4\\
\hline
4\\
\hline
\end{array}&\\
\end{array}
$$

We denote $SS^\lambda_{m,n}$ or $SS^\lambda$ the set of semistandard Young tableaux for $\mathfrak{sl}(m,n)$, with shape $\lambda$.\\

If $n=0$, a semistandard tableau for $\mathfrak{sl}(m,0)$ is a usual semistandard Young tableau. If $m=0$, a semistandard tableau for $\mathfrak{sl}(0,n)$ is the transpose of a usual semistandard Young tableau.\\




If $T$ is a semistandard tableau for $\mathfrak{sl}(m,n)$, we define $T^+$ as the tableau formed by the $m$ first rows of $T$ and $T^-$ the tableau whose the $(j-m)^{th}$ row is the row $j$, for $j>m$. We shall write $T=T^+\uplus T^-$:
$$
t^+_{i,j}=t_{i,j}\qquad(i\leq m),\qquad t^-_{i,j}=t_{j+m,i}.
$$

For each semistandard tableau $T$ with shape $\lambda$, we associate, as usual, a vector $e_T$ in $\mathbb S^\lambda$, realized as a submodule in $T(V)$.

First, we choose the standard tableau $St=(s_{ij})$, with shape $\lambda$, defined by:
$$
s_{ij}=i+\sum_{k<j}c_k,
$$ 
if $c_k$ is the height of the column $k$, that is we fill the boxes column by column, from the top to the bottom and from left to right. 

Then, if $(e_1,\dots,e_{n+m})$ is the canonical basis in $V$, and $T=(t_{ij})$ is a semistandard or not semistandard tableau with shape $\lambda$ ($t_{ij}\leq n+m$), we denote $e^T$ the tensor product:
$$
e^T=(e_{t_{11}}\otimes e_{t_{21}}\otimes\dots\otimes e_{t_{c_11}})\otimes(e_{t_{12}}\otimes\dots\otimes e_{t_{c_22}})\otimes\dots\otimes(e_{t_{1A_1}}\otimes\dots\otimes e_{t_{c_{A_1}A_1}}).
$$

Now, if $N=\sum c_k$ is the number of entries in $\lambda$, the group $S_N$ acts on the right on $T^N(V)$ by 
$$
v_1\otimes\dots\otimes v_N\cdot\sigma=\varepsilon_{|v|}(\sigma)v_{\sigma(1)}\otimes\dots\otimes v_{\sigma(n)}.
$$

Denote $a_\lambda$ the sum of the permutations $\sigma$ in $S_N$ which preserve the rows of the standard tableau $St$, similarly denotes $b_\lambda$ the linear combination of the permutations $\tau$ in $S_N$ which preserve the columns of $St$, with the sign of $\tau$ as coefficient. 

The Young symmetrizer associated to our choice of $St$ is $c_\lambda=a_\lambda\cdot b_\lambda$. Finally, we associate to $T$ the vector:
$$
e_T=e^T\cdot c_\lambda.
$$
Remark that with our preceding notations,
$$
e_T\in Sym^{\tilde{a}_1}(\wedge^{m+A'_1}V)\otimes\dots\otimes Sym^{\tilde{a}_{n-1}}(\wedge^{m+A'_{n-1}}V)\otimes Sym^{\tilde{a}_m}(\wedge^mV)\otimes\dots\otimes Sym^{a_1}(V).
$$

For instance, for $\mathfrak{sl}(2,2)$, we consider the semistandard tableau:
$$
T=
\begin{array}{rll}
~~~~~~~~~~~~~~&\begin{array}{|c|c|}
\hline
2&2\\
\hline
3&4
\end{array}&~~~~~~~~~~~~~~~~~\\
\hline
&\begin{array}{|c|}
3\\
\hline
\end{array}&
\end{array}.
$$
Then:
$$
e^T=(e_2\otimes e_3\otimes e_3)\otimes(e_2\otimes e_4),
$$
and 
$$
e^T\cdot a_\lambda=2(e^T-e^{T'}),\quad\text{ where }T'=
\begin{array}{rll}
~~~~~~~~~~~~~~&\begin{array}{|c|c|}
\hline
2&2\\
\hline
4&3
\end{array}&~~~~~~~~~~~~~~~~~\\
\hline
&\begin{array}{|c|}
3\\
\hline
\end{array}&
\end{array}
$$
and finally:
$$
e_T=e^T\cdot c_\lambda=3!2!\left[2(e_2\wedge e_3\wedge e_3)\otimes(e_2\wedge e_4)-2(e_2\wedge e_4\wedge e_3)\otimes(e_2\wedge e_3)\right].
$$

Especially, with our preceding notations and choice of the factor $\alpha_\lambda$ (Proposition \ref{vlambda}), 
$$
v_\lambda=e_{S^0_\lambda},
$$
where $S^0_\lambda$ is the tableau such that $t_{ij}=i$ if $i\leq m$ and $t_{ij}=m+j$ if $i>m$.\\

Berele and Regev prove:

\begin{thm}(\cite{BR})

\

A basis for the space $\mathbb S^\lambda$ is given by the familly of all the vectors $e_T$ for $T$ in $SS^\lambda$.\\
\end{thm}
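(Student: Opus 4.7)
The plan has two parts: first establish that $\{e_T : T \in SS^\lambda\}$ spans $\mathbb S^\lambda$, then verify linear independence by a dimension count.

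For the spanning part, note that $\mathbb S^\lambda$ is the cyclic $\mathfrak{sl}(m,n)$-module generated by $v_\lambda = e_{S^0_\lambda}$ (Proposition \ref{vlambda}). Because any root vector of $\mathfrak{sl}(m,n)$ acts on a tensor $e^T$ by merely replacing some basis vector $e_k$ in a single tensor factor by $\pm e_\ell$, the whole module lies in the span of the vectors $e_T = e^T \cdot c_\lambda$ as $T$ runs over all (not necessarily semistandard) fillings of $\lambda$ by entries in $\{1,\ldots,m+n\}$. The task therefore reduces to expressing each such $e_T$ as a linear combination of $e_{T'}$ with $T'$ semistandard.

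The straightening proceeds in two layers. The column antisymmetrizer $b_\lambda$ kills $e_T$ as soon as an even entry ($\leq m$) is repeated in a column, and otherwise lets one arrange even entries strictly increasing down a column up to a sign. Dually, when one applies $a_\lambda$, the super-sign from odd vectors forces $e_T = 0$ as soon as an odd entry ($>m$) is repeated in a row, and permits strict ordering of odd entries along a row. Configurations that violate the weak row inequalities among even entries or the weak column inequalities among odd entries are then reduced via the Pl\"ucker/Garnir relations recalled in Theorem \ref{shapeslm} together with their super-analog established in Section 6 below: each such relation expresses $e_T$ as a signed sum of $e_{T'}$ strictly smaller in a chosen monomial order on fillings. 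Iterating the procedure terminates at a combination of semistandard fillings. For linear independence, I would appeal to super Schur--Weyl duality: the character of $\mathbb S^\lambda$ equals the hook Schur function $HS_\lambda(x_1,\ldots,x_m;y_1,\ldots,y_n)$, which by Sergeev's combinatorial identity expands as a sum over $T \in SS^\lambda$ of a monomial recording the multiplicities of the entries of $T$. Specializing all variables to $1$ gives $\dim \mathbb S^\lambda = |SS^\lambda|$, so the spanning family has exactly the cardinality required to be a basis.

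The main obstacle is the straightening step. Unlike in the case of $\mathfrak{sl}(m)$, the $\mathbb Z_2$-grading of $V$ swaps the roles of symmetrization and antisymmetrization between even and odd entries, so the super-analog of Pl\"ucker's identity must be stated carefully and the induction on fillings controlled to guarantee termination; this is precisely the content of Section 6 of the paper, on which the argument relies.
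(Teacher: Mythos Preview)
The paper does not supply its own proof of this theorem: it is stated with the citation \cite{BR} and immediately followed by a transition to Section~6, so there is no in-paper argument to compare against. Your outline is in fact close to what Berele--Regev and King--Welsh do: straightening via Garnir/Pl\"ucker relations for the spanning part, and the hook Schur function $HS_\lambda$ for the dimension count.

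A few comments on your sketch. The spanning reduction is sound once you observe, as the paper notes, that the left $\mathfrak{sl}(m,n)$-action and the right $S_N$-action commute, so $X\cdot e_T=(X\cdot e^T)\cdot c_\lambda$ is again a combination of $e_{T'}$'s. The delicate point you flag---that the $\mathbb Z_2$-grading swaps the roles of row/column (anti)symmetrization---is exactly why the paper defers to \cite{KW} for the equivalence of Garnir and Pl\"ucker relations rather than redoing the straightening from scratch. One thing to tighten: your termination argument (``strictly smaller in a chosen monomial order'') needs the order to be compatible with the super signs, e.g.\ the column-reading word order used in \cite{KW}; without specifying it, the induction is not yet an argument. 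For linear independence, invoking the combinatorial expansion of $HS_\lambda$ is the standard route, but note that this itself is a nontrivial theorem of Berele--Regev (their super RSK), so you are ultimately appealing to the same source the paper cites.
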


In the next section, we recall a presentation of $\mathbb S^\bullet$ as the subspace of elements in $T(\bigwedge V)$ satisfying the Garnir relations.\\



\section{Garnir and Pl\"ucker relations}


\

We first saw (see Theorem \ref{shapeslm}) that in the classical $\mathfrak{sl}(m)$ case, the shape algebra is a quotient of $Sym^\bullet(\bigwedge \mathbb C^m)$ by an ideal generated with the Pl\"ucker relations. In the graded case, the generalization of Pl\" ucker relations are the Garnir relations.\\ 

Let us first describe the Garnir relations.\\

Let $C=(v_1,\ldots,v_P)$ and $D=(w_1,\ldots,w_Q)$ be two finite sequence of vectors in $V$. We suppose $P\geq Q$ and put $(v_1,\ldots,v_P,w_1,\ldots,w_Q)=(u_1,\ldots,u_{P+Q})$.\\


\

The Garnir relations take place in $Sym^\bullet(\bigwedge V)$. We consider vectors with the form:
$$
u_C\cdot u_D=(u_1\wedge \ldots \wedge u_P)\cdot(u_{P+1}\wedge\ldots \wedge u_{P+Q}),
$$
where $\cdot$ is for $\otimes$ if $P>Q$ and the symmetric product if $P=Q$. Recall that if $\sigma$ is in $S_{P+Q}$,
$$
(u_C\cdot u_D)\cdot\sigma=\varepsilon_{|u|}(\sigma)(u_{\sigma(1)}\wedge\ldots\wedge
u_{\sigma(P)}).(u_{\sigma(P+1)}\wedge\ldots\wedge u_{\sigma(P+Q)}).
$$
To define the Garnir relations, we consider particular permutations $\sigma=(X'\leftrightarrow Y')$.\\

Let $p\leq P$, $q\leq Q$, such that $p+q>P$. put $X=(v_1,\ldots, v_p)$, $Y=(w_1,\ldots, w_q)$.

A subsequence with $r$ elements $X'\subset X$ is a sequence $(v_{i_1},v_{i_2},\ldots,v_{i_r})$ such that $i_1<i_2<\ldots<i_r$. Denote $s_r(X)$ the set of such subsequences.

If $r\leq inf(p,q)$, let $X'$ be in $s_r(X)$, and $Y'$ in $s_r(Y)$,
$$\aligned
X'&=(v_{i_1},v_{i_2},\ldots,v_{i_r})=(u_{i_1},u_{i_2},\ldots,u_{i_r}),\\
Y'&=(w_{j_1},w_{j_2},\ldots,w_{j_r})=(u_{P+j_1},u_{P+j_2},\ldots,u_{P+j_r}),
\endaligned
$$
we define the permutation $X'\leftrightarrow Y'$ in $S_{P+Q}$ as:
$$\aligned
X'\leftrightarrow Y'&=(i_1,P+j_1)(i_2,P+j_2)\dots(i_r,P+j_r)\\
&=\left(\begin{smallmatrix}
1&\ldots&i_1&\ldots&i_r&\ldots&P&P+1&\ldots&P+j_1&\ldots&P+j_r&\ldots&P+Q\\
1&\ldots&P+j_1&\ldots&P+j_r&\ldots&P&P+1&\ldots&i_1&\ldots&i_r&\ldots&P+Q\\
\end{smallmatrix}\right).\endaligned
$$

By definition, the Garnir relations on the vector $u_C.u_D$ associated to $X$ and $Y$ is:
$$
G_{X,Y}(u_C.u_D)=\sum_{r=0}^{inf(p,q)}~(-1)^r\sum_{\begin{smallmatrix}X'\in s_r(X)\\
Y'\in s_r(Y)\end{smallmatrix}}(u_C\cdot u_D)\cdot(X'\leftrightarrow Y').
$$

\begin{thm}~~ (\cite{KW})

\

As a vector space, the shape algebra $\mathbb S^\bullet$ is the quotient of the symmetric algebra
$$
Sym^\bullet(\wedge V)=\sum_{\tilde{a}_k,\dots,\tilde{a}_1}Sym^{\tilde{a}_k}(\wedge^k V)\otimes\dots\otimes Sym^{\tilde{a}_1}(V),
$$
by the ideal generated by the Garnir relations.\\

More precisely, given $\lambda\in\Lambda_{cov}$, if $\tilde{a}_j$ is the number of columns with height $j$, $\mathbb S^\lambda$ is exactly the image of 
$\bigotimes_j(Sym^{\tilde{a}_j}(\wedge^jV))$ in the quotient.\\
\end{thm}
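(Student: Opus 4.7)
The plan is to prove the two statements simultaneously by constructing an explicit surjection and then identifying its kernel. Define a linear map
\[
\Phi : \bigoplus_{\tilde a_k,\dots,\tilde a_1} Sym^{\tilde a_k}(\wedge^k V)\otimes\dots\otimes Sym^{\tilde a_1}(V) \longrightarrow \mathbb S^\bullet
\]
by sending a product of columns $u_{C_1}\cdot u_{C_2}\cdots u_{C_N}$ (with columns arranged in non-increasing heights, as required by the condition $(*)$) to the tensor $e^T\cdot c_\lambda$, where $T$ is the filling whose columns are the $C_i$ and $\lambda$ is the corresponding shape. The definition is compatible with the wedge/symmetric products because $c_\lambda=a_\lambda\cdot b_\lambda$ already antisymmetrizes within columns and symmetrizes within rows of $St_\lambda$. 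By Berele--Regev's theorem, the vectors $e_T$ with $T$ semistandard span $\mathbb S^\lambda$, so $\Phi$ restricts to a surjection onto $\mathbb S^\lambda$ on the piece $\bigotimes_j Sym^{\tilde a_j}(\wedge^j V)$.

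Next I would show $\Phi$ annihilates every Garnir relation $G_{X,Y}(u_C\cdot u_D)$. Fixing $X=(v_1,\dots,v_p)$ and $Y=(w_1,\dots,w_q)$ with $p+q>P$, the sum
\[
\sum_{r=0}^{\inf(p,q)}(-1)^r\sum_{X'\in s_r(X),\,Y'\in s_r(Y)} (u_C\cdot u_D)\cdot(X'\leftrightarrow Y')
\]
is exactly (up to normalization) a signed shuffle obtained by fully antisymmetrizing the $p+q$ entries of $X\cup Y$ inside the two columns. Since the column-antisymmetrizer $b_\lambda$ already antisymmetrizes in each of the two columns, inserting this shuffle produces an antisymmetrization over $p+q>P$ positions lying in two columns of respective heights $P$ and $Q$; but the box-positions available accommodate only $P+Q$ total entries, with at most $P$ in either column, so the shuffle factors through an antisymmetrizer over more than $P$ indices placed in a column of height $P$, forcing the sum to vanish after applying $c_\lambda$. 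The signs coming from $\varepsilon_{|u|}(\sigma)$ cancel correctly against the exchange signs because the entries from $X$ and $Y$ have matching parities with respect to the column operations.

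The main obstacle is the converse: that the ideal generated by Garnir relations is the entire kernel of $\Phi$. Here I would argue by a straightening algorithm. Introduce a total order on fillings (for instance, lexicographic on the word read column by column top-to-bottom), and show that whenever a filling $T$ fails the semistandard condition of Definition \ref{semistandard} at some position, there is an associated Garnir relation, applied to the pair of columns where the failure occurs, that rewrites $u^T$ modulo Garnir relations as a linear combination of $u^{T'}$ with $T'$ strictly smaller. Termination follows because the order is well-founded on fillings of a fixed shape. Consequently the quotient of $Sym^\bullet(\wedge V)$ by the Garnir ideal is spanned, in degree $\lambda$, by images of semistandard tableaux. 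Combined with Berele--Regev's theorem asserting that the $e_T$ for $T\in SS^\lambda$ are linearly independent in $\mathbb S^\lambda$, linear independence is automatic in the quotient, and $\Phi$ descends to an isomorphism with $\mathbb S^\lambda$, which yields both assertions of the theorem.

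The hardest technical point will be the correct tracking of super signs in the straightening step: unlike the $\mathfrak{sl}(m)$ case, the row-entries can repeat among the odd values while column-entries can repeat among the even values, so the inductive decrease must be measured with a refined statistic (e.g.\ the pair (shape of semistandard violation, position)) and the Garnir sign pattern must be checked to preserve it. Once this is established, the proof reduces, as in the classical Plücker case, to the equality of dimensions with $|SS^\lambda|$ on both sides.
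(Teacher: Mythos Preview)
The paper does not prove this theorem; it is stated with the citation \cite{KW} and no argument is given. So there is no ``paper's own proof'' to compare against, only the original King--Welsh argument.

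Your outline follows the standard strategy of that reference: define a surjection from $Sym^\bullet(\wedge V)$ to $\mathbb S^\bullet$, check that Garnir relations lie in the kernel, and then run a straightening algorithm to show that modulo Garnir every filling reduces to a combination of semistandard ones, whence a dimension count via Berele--Regev finishes. That plan is correct in broad strokes and is essentially what \cite{KW} does.

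Two points deserve more care than your sketch gives them. First, the definition of $\Phi$ is not quite clean: elements of $Sym^\bullet(\wedge V)$ are already column-antisymmetrized, but the symmetric product identifies columns of equal height, which is \emph{not} the same symmetry as the row symmetrizer $a_\lambda$ in $c_\lambda$. You should check that sending $u_{C_1}\cdots u_{C_N}\mapsto e^T\cdot c_\lambda$ is well-defined under permutation of equal-height columns; this uses that swapping two equal-height columns in $T$ is absorbed by $a_\lambda$ up to the correct super sign. Second, your explanation of why Garnir relations vanish (``factors through an antisymmetrizer over more than $P$ indices placed in a column of height $P$'') is not accurate as written: the Garnir element does not literally antisymmetrize $p+q$ entries inside a single column. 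The correct mechanism is that the Garnir sum, composed with the column antisymmetrizer $b_\lambda$, equals the full antisymmetrizer over the union $X\cup Y$ of size $p+q>P$, and this vanishes because any such antisymmetrization forces a repeated position in the standard tableau $St_\lambda$. In the super setting the parity bookkeeping in $\varepsilon_{|u|}$ must be tracked through this identity; you flag this but do not carry it out, and it is where the actual work lies.
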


The graded Pl\"ucker relations are the following: for any $q\geq 1$, we consider $Y=(u_{P+1},\ldots,u_{P+q})$, the relation is:
$$
u_C.u_D=P_q(u_C.u_D)=\sum_{X'\in s_q(C)}(u_C\cdot u_D)\cdot(X'\leftrightarrow Y).
$$
In fact, it is proved in \cite{KW} that the Garnir relations are equivalent to the Pl\"ucker relations, since:

\begin{thm}

\

We keep our notations. Then the following are equivalent:
\begin{itemize}
\item[1)] $u_C.u_D=P_1(u_C.u_D)$ for any $C$, and $D$,
\item[2)] $u_C.u_D=P_q(u_C.u_D)$ for any $C$, any $D$, any $q$,
\item[3)] $G_{C,Y}(u_C.u_D)=0$ for any $C$, any $D$, and any $Y \neq \emptyset$,
\item[4)] $G_{X,Y}(u_C.u_D)=0$ for any $C$, any $D$, any $X$, any $Y$ such that $\#(X\cup Y)>\#C$.\\
\end{itemize}
\end{thm}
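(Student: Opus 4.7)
\emph{Proof plan.} The approach is to verify the four-way equivalence through a cycle of implications. Two of them are immediate specialisations: $(2)\Rightarrow(1)$ is obtained by fixing $q=1$, and $(4)\Rightarrow(3)$ is obtained by fixing $X=C$ (so that $\#(X\cup Y)=P+\#Y>P$ exactly when $Y\neq\emptyset$, matching the hypothesis of (3)). It therefore suffices to prove the three non-trivial implications $(1)\Rightarrow(2)$, $(2)\Rightarrow(3)$, and $(3)\Rightarrow(4)$.

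For $(1)\Rightarrow(2)$, I would proceed by induction on $q$, the base case $q=1$ being exactly (1). For the inductive step, the plan is first to apply $P_1$ to $u_C\cdot u_D$, swapping a single element of $C$ with $u_{P+1}$; in each term produced, a new element of $C$ has moved to the top of the second factor, and I then apply $P_1$ again, swapping an element with $u_{P+2}$, and so on. After $q$ iterations, the antisymmetry inside the wedge $u_C=v_1\wedge\ldots\wedge v_P$ collapses the $q!$-fold sum over orderings into the single alternating sum defining $P_q(u_C\cdot u_D)$.

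For $(2)\Rightarrow(3)$, I would expand
\[
G_{C,Y}(u_C\cdot u_D)=\sum_{r=0}^{\min(P,q)}(-1)^r\sum_{\substack{X'\in s_r(C)\\ Y'\in s_r(Y)}}(u_C\cdot u_D)\cdot(X'\leftrightarrow Y').
\]
For each fixed $Y'\in s_r(Y)$, permuting the wedge $u_D$ to bring $Y'$ to the top (at the cost of a sign which is absorbed into the permutation $X'\leftrightarrow Y'$) reduces the inner sum over $X'$ to the Plücker identity $P_r$ applied to the rearranged pair. By (2), each such inner sum equals $u_C\cdot u_D$. The outer alternating sum then becomes $\sum_{r=0}^{q}(-1)^r\binom{q}{r}(u_C\cdot u_D)=(1-1)^q(u_C\cdot u_D)=0$ as soon as $Y\neq\emptyset$.

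The main obstacle is $(3)\Rightarrow(4)$. Given $X\subsetneq C$ with $\#X+\#Y>P$, the plan is to express $G_{X,Y}(u_C\cdot u_D)$ as a signed linear combination of Garnir expressions $G_{C,Y'}(u_C\cdot u_D)$, each of which vanishes by (3). The idea is to complete each $X'\in s_r(X)$ to a subsequence $X''\in s_{r+k}(C)$ by prescribing which elements of $C\setminus X$ are adjoined; the antisymmetry of $u_C$ produces signed multiplicities that, after reorganisation, match the alternating signs $(-1)^r$ appearing in the Garnir definition. Making the two sources of signs (wedge antisymmetry of $u_C$ and the combinatorial $(-1)^r$ of $G$) fit together is the delicate bookkeeping that constitutes the principal difficulty; once it is carried out, (3) kills each resulting term and yields $G_{X,Y}(u_C\cdot u_D)=0$.
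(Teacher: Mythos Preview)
The paper does not actually prove this theorem: it is stated with the attribution ``it is proved in \cite{KW} that the Garnir relations are equivalent to the Pl\"ucker relations'', and no argument is given. So there is nothing in the paper to compare your proposal against; what follows is an assessment of the outline on its own merits.

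Your easy implications $(2)\Rightarrow(1)$ and $(4)\Rightarrow(3)$ are correct, and your argument for $(2)\Rightarrow(3)$ is clean and works: for each fixed $Y'\in s_r(Y)$, rearranging $u_D$ so that $Y'$ sits at the top costs only a sign and turns the inner sum over $X'\in s_r(C)$ into a genuine Pl\"ucker expression $P_r$, which by (2) returns $u_C\cdot u_D$; the outer alternating sum then collapses by the binomial identity. Your sketch for $(1)\Rightarrow(2)$ is also on the right track, though you should be explicit that after the $k$-th application of $P_1$ the element $u_{P+k}$ now occupies a position in the \emph{first} wedge factor, so the next $P_1$ must be invoked for a different pair of columns (this is allowed because (1) is assumed for \emph{all} $C$ and $D$); the antisymmetry of the wedge then indeed accounts for the overcounting by a factor that matches the combinatorics of $s_q(C)$.

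The genuine gap is $(3)\Rightarrow(4)$. Your idea of ``completing each $X'\in s_r(X)$ to a subsequence $X''\in s_{r+k}(C)$'' does not, as stated, produce an expression of $G_{X,Y}$ as a linear combination of relations $G_{C,Y'}$ for the \emph{same} pair $(C,D)$: enlarging $X'$ into $C\setminus X$ forces you to enlarge $Y'$ correspondingly, but there are not enough elements of $Y$ to absorb this (you only know $\#X+\#Y>P$, not $\#Y\geq P$). A workable route is instead to exploit that (3) holds for \emph{all} pairs of columns: in each term of $G_{X,Y}(u_C\cdot u_D)$ the elements of $Z=C\setminus X$ stay in the first factor, so one can reorganise $u_C\cdot u_D$ and apply (3) with the shorter first column $X$ (of length $p=\#X$) and the second column $D$, for which $\#X+\#Y>p$ is automatic; the elements of $Z$ are carried along as spectators via the wedge. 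You should make this reduction precise rather than attempting the inclusion--exclusion you describe, which does not close.
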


If we consider vectors of the form $e^T$ for a given shape $\lambda$, the Pl\"ucker relations imply columns of $T$. More precisely suppose $T$ has $k$ columns with height $c_1,\ldots,c_k$, $e_T=\sum\pm e_{C_1}\cdot\ldots\cdot e_{C_k}$, with $e_{C_j}\in \wedge^{c_j}V$.

For any $j<k$, for any $q>0$, if $Y$ is the $q$-top of the column $C_{j+1}$, with a small abuse of notations, the Pl\"ucker relation is:
$$\aligned
e_T&=\sum\pm e_{C_1}\cdot\ldots\cdot e_{C_{j-1}}\cdot P_q(e_{C_j}\cdot e_{C_{j+1}})\cdot e_{C_{j+2}}\cdot\ldots\cdot e_{C_k}\\
&=\sum_{X\in s_q(C_j)}e_T\cdot (X\leftrightarrow Y)\\
&=\sum_{X\in s_q(C_j)}\varepsilon_{|e_i|}(X\leftrightarrow Y)e_{T\cdot(X\leftrightarrow Y)}\\
&=P_q^{(j)}(e_T).
\endaligned
$$ 

Later one, we shall work with the part $T^-$ of a semistandard tableau $T$, which is under the line. Especially, we shall extract some `trivial' subtableau from $T^-$, exactly as in \cite{ABW}, but modulo a transposition of the tableau. To be complete, we shall prove here that the transpose of the usual Pl\"ucker relations on $(T^-)^t$ hold for $T$, we call them horizontal Pl\"ucker relations, since, on vectors $e^T$, these relations yield from permutations on the rows of the tableau $T$.\\

We look for two successive rows $R_i$, $R_{i+1}$ in a tableau $T$, containing only entries strictly larger than $m$. We denote these entries $s_1,\dots,s_p$ and $t_1,\dots,t_q$, with $p\geq q$, then the corresponding horizontal Pl\"ucker relation is:
$$
e_T=\sum_{j=1}^p e_{T\cdot(s_j\leftrightarrow t_1)}=HP^{(i)}_1(e_T).
$$

\begin{prop}

\

For any tableau $T$, and any rows $R_i,R_{i+1}$ of $T$ whose entries are all strictly larger than $m$, the horizontal Pl\"ucker relation $e_T=HP^{(i)}_1(e_T)$ holds.\\
\end{prop}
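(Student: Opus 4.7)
The plan is to deduce the horizontal Pl\"ucker relation from the vertical one (recalled from \cite{KW} above) by transposing the two-row sub-tableau in $R_i, R_{i+1}$, taking advantage of the fact that every entry exceeding $m$ corresponds to an odd basis vector.

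The basic input is the super-parity identity: for $|x|=|y|=1$,
\[
e_x\wedge e_y = e_y\wedge e_x \qquad\text{and}\qquad e_x\cdot e_y = -\,e_y\cdot e_x,
\]
so super-wedges of odd vectors are symmetric and super-symmetric products of odd vectors are antisymmetric, reversing the even-vector case. Using this I would first localize the claim to the two-row sub-tableau $T'$ formed by $R_i$ and $R_{i+1}$: the transpositions $(s_j\leftrightarrow t_1)$ fix everything outside $T'$, and the Young symmetrizer $c_\lambda=a_\lambda\cdot b_\lambda$ decomposes compatibly, so it suffices to verify the identity for $T'$. I would then pass to the formal transpose $(T')^t$, a two-column tableau of heights $p$ and $q$ with all-odd entries, apply the vertical Pl\"ucker relation $P_1$ there, and translate the result back via the exchange $\wedge\leftrightarrow\cdot$ on odd vectors; this gives $e_{T'} = \sum_{j=1}^p e_{T'\cdot(s_j\leftrightarrow t_1)}$, whence the full identity on $T$ follows.

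The hard part will be the super-sign bookkeeping. Each transposition of two odd positions contributes $\varepsilon=-1$, and each swap $\wedge\leftrightarrow\cdot$ forced by the change of roles between columns and rows also contributes $-1$; one must check that these factors cancel termwise, so that the relation appears with unsigned right-hand side $\sum_j e_{T\cdot(s_j\leftrightarrow t_1)}$ rather than carrying the $\varepsilon_{|e_i|}(X\leftrightarrow Y)$ coefficient visible in $P_q^{(j)}$. A secondary issue is compatibility with the full Young symmetrizer $c_\lambda$ of $T$, equivalently that the local identity descends through the Garnir ideal of Section~6 to live in $\mathbb S^\bullet$. As a robust fallback, should the transposition argument become delicate, one can expand both sides directly from $e_T=e^T\cdot c_\lambda$ and verify the cancellations on odd positions; the base case $p=2,q=1$ already works out this way, with termwise cancellations yielding $e_T = e_{T\cdot(s_1\leftrightarrow t_1)} + e_{T\cdot(s_2\leftrightarrow t_1)}$ after straightforward expansion.
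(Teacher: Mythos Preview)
Your transposition idea is sound and genuinely different from the paper's route. The paper does not invoke the vertical Pl\"ucker relation at all: it takes exactly your ``fallback'' and carries it out in full. It first treats the case where $T$ consists only of the two rows $R_i,R_{i+1}$, expands
\[
e_T=\sum_{(\sigma,\tau)\in S_p\times S_q}\varepsilon(\sigma)\varepsilon(\tau)\,(e_{s_{\sigma(1)}}\wedge e_{t_{\tau(1)}})\otimes\cdots\otimes(e_{s_{\sigma(p)}}),
\]
and splits $HP^{(i)}_1(e_T)$ according to whether $\sigma^{-1}(j)=\tau^{-1}(1)$ or not. The diagonal part reproduces $e_T$ (using $e_{t_1}\wedge e_{s_j}=e_{s_j}\wedge e_{t_1}$ for odd vectors), and the off-diagonal part cancels in pairs under $\sigma\mapsto (j,j')\circ\sigma$. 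The passage to general $T$ is then handled by observing that the other rows only modify the signs $\varepsilon(\sigma),\varepsilon(\tau)$ to some $\varepsilon'(\sigma),\varepsilon'(\tau)$ that still satisfy $\varepsilon'((j,j')\circ\sigma)=-\varepsilon'(\sigma)$.

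Your duality approach would work too, since for all-odd entries $a_\lambda$ acts as an antisymmetrizer and $b_\lambda$ as a symmetrizer, so $c_\lambda$ on $T'$ is literally the Young symmetrizer for the conjugate shape on even vectors, whence $P_1$ for the two-column transpose becomes $HP_1$ for the two-row original; the sign issue you flag does resolve termwise. What it buys is a conceptual explanation of \emph{why} horizontal Pl\"ucker holds, at the cost of importing the vertical relation from \cite{KW}. The paper's direct expansion is more self-contained and makes the cancellation mechanism explicit, which is arguably cleaner here since only the $q=1$ case is needed. One caution on your localization step: ``$c_\lambda$ decomposes compatibly'' is not quite free, since the column antisymmetrizer $b_\lambda$ mixes rows $i,i+1$ with the rest; the paper's last paragraph is precisely the verification that this does not spoil the argument, and your proof would need the same check.
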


\begin{proof}

Suppose first that $T$ contains only the two rows $R_i$, $R_{i+1}$. Then:
$$
e_T=\sum_{(\sigma,\tau)\in S_p\times S_q}\varepsilon(\sigma)\varepsilon(\tau)(e_{s_{\sigma(1)}}\wedge e_{t_{\tau(1)}})\otimes\ldots \otimes (e_{s_{\sigma(q)}}\wedge e_{t_{\tau(q)}})\otimes\ldots \otimes(e_{s_{\sigma(p)}}).
$$
On the other hand, the sum in $HP_1^{(i)}$ is
$$\aligned
HP_1^{(i)}(e_T)&=\sum_{j=1}^p e_{T\cdot(s_j\leftrightarrow t_1)}\\
&=\sum_j\sum_{\begin{smallmatrix}(\sigma,\tau)\in S_p\times S_q\\
\sigma^{-1}(j)=\tau^{-1}(1)\end{smallmatrix}}\varepsilon(\sigma)\varepsilon(\tau)(e_{s_{\sigma(1)}}\wedge e_{t_{\tau(1)}})\otimes\ldots \otimes(e_{t_1}\wedge e_{s_j})\otimes\ldots\\
&+\sum_j\sum_{\begin{smallmatrix}(\sigma,\tau)\in S_p\times S_q\\
\sigma^{-1}(j)\neq\tau^{-1}(1)\end{smallmatrix}}\varepsilon(\sigma)\varepsilon(\tau)(e_{s_{\sigma(1)}}\wedge e_{t_{\tau(1)}})\otimes\ldots 
\otimes(e_{s_{j'}}\wedge e_{s_j})\otimes\ldots 
\endaligned
$$
Since the vectors $e_k$ are odd, in the first sum, $e_{t_1}\wedge e_{s_j}=e_{s_j}\wedge e_{t_1}=e_{\sigma(\sigma^{-1}(j))}\wedge e_{\tau(\tau^{-1}(1))}$, thus the first sum is exactly $e_T$.

In the second sum, we keep the terms for which $j'<j$, getting a quantity $A$ and, for the terms where $j'>j$, we replace $e_{s_{j'}}\wedge e_{s_j}$ by $e_{s_j}\wedge e_{s_{j'}}$ and $\sigma$ by $\sigma'=(j,j')\circ\sigma$, thus we get the terms of $A$, but with an opposite sign, the second sum vanishes.\\

In the general case, if $T^{<i}$ is the top of $T$, containinig the $i-1$ first rows and $T^{>i+1}$ the bottom, containing rows after $i+1$, we can present the $\cdot a_\lambda$ action on $T$ shematically as:
$$
T\cdot a_\lambda=\sum_{(\sigma,\tau)\in S_p\times S_q}\begin{array}{ccc} &T^{<i}\cdot a_\lambda^{<i}&\\
s_{\sigma(1)}&\ldots&s_{\sigma(p)}\\
t_{\tau(1)}&\ldots&t_{\tau(q)}\\
&&\\
&T^{>i+1}\cdot a_\lambda^{>i+1}&
\end{array}
$$
then the above proof works with the same computation, except we have to modify the signs $\varepsilon(\sigma)$ and $\varepsilon(\tau)$ into $\varepsilon'(\sigma)$ and $\varepsilon'(\tau)$, to take into account the vectors corresponding to the rows of $T$ above and under $R_i$ and $R_{i+1}$. But when we use $(j,j')\circ\sigma$, we verify directly that we still have $\varepsilon'((j,j')\circ\sigma)=-\varepsilon'(\sigma)$.\\

The horizontal Pl\"ucker relation holds.\\
\end{proof}


\section{Product on tableaux}


\

The $\mathfrak{sl}(m,n)$ action on the vectors $e_T$ defines an action on the space generated by the semistandard tableaux with a given shape $\lambda$. More precisely, for any $i$, $j$ the matrix $E_{ij}$ with all entries 0, except in the position $(i,j)$, where the entry is 1 acts on $e_T$ ($T=(t_{k,\ell})$ semistandard or not semistandard) as follows:
$$
E_{ij}\cdot e_T=\sum_{t_{k,\ell}=j}e_{T\cdot(t_{k,\ell}\leftrightarrow i)},
$$
then we decompose this sum onto the basis $e_U$, $U$ in $SS^\lambda$, getting:
$$
E_{ij}\cdot e_T=\sum_r x_r e_{U_r}.
$$
We just write this last relation as:
$$
E_{ij}\cdot T=\sum_r x_r U_r.
$$

The algebra structure on $\mathbb S^\bullet$ defines a multiplication (still denoted $\star$) on the vector space generated by semistandard tableaux. If $T$ and $T'$ are two semistandard tableaux, with shape $\lambda$ and $\mu$, we write:
$$
e_T\star e_{T'}=\sum_i x_ie_{U_i},
$$
with $x_i\neq0$ and each $U_i$ is semistandard, with shape $\lambda+\mu$. We thus put
$$
T\star T'=\sum_i x_iU_i.
$$

Let us describe directly the multiplication $\star$ on the space of tableaux.\\

Let us start with two semistandard tableaux $S$ with shape $\lambda$ and $T$ with shape $\mu$. We write $S=S^+\uplus S^-$ and $T=T^+\uplus T^-$. 

Suppose the length of the rows in $S^+$ are $A_1,\dots,A_m$. We define $S^+\circ T^+$ as the tableau $U^+=(u_{ij})$ whose the entries in the row $i$ are $u_{ij}=s_{ij}$, if $j\leq A_i$, and $u_{i(A_i+j)}=t_{ij}$.

Similarly suppose the height of the columns in $S^-$ are $A'_1,\dots, A'_{n-1}$. We define $S^-\circ T^-$ as the tableau $U^-=(u_{(m+i)j})$ whose the entries in the column $j$ are $u_{(m+i)j}=s_{(m+i)j}$, if $i\leq A'_j$, and $u_{(m+A'_j+i)j}=t_{(m+i)j}$.\\

\begin{prop}

\

With the above notation, we put $U=U^+\uplus U^-$, it is a possibly non semistandard tableau with shape $\lambda+\mu$, suppose the decomposition of $e_U$ into the basis $e_{U_i}$, $U_i$ semistandard is 
$$
e_U=\sum_i x_ie_{U_i},
$$
then
$$
S\star T=\sum_i x_iU_i.
$$
\end{prop}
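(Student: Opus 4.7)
The plan is to invoke the uniqueness characterization of $\star$ recalled in Section~4: a bilinear intertwining map $\mathbb S^\lambda\otimes\mathbb S^\mu\to\mathbb S^{\lambda+\mu}$ that sends $v_\lambda\otimes v_\mu$ to $v_{\lambda+\mu}$ must coincide with $\star$. So I define a candidate map $\Phi$ on the semistandard basis by $\Phi(e_S\otimes e_T)=\sum_i x_i e_{U_i}$, where $U=S\circ T$ is the (in general non-semistandard) tableau of shape $\lambda+\mu$ constructed in the proposition, $e_U:=e^U\cdot c_{\lambda+\mu}\in\mathbb S^{\lambda+\mu}$, and $\sum_i x_i e_{U_i}$ is the expansion of $e_U$ on the semistandard basis of $\mathbb S^{\lambda+\mu}$ (guaranteed to exist and be unique by Berele--Regev). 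Extending $\Phi$ bilinearly, the proposition reduces to the identity $\Phi=\star$.

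The value on highest weight vectors is the easy part. Recall $v_\lambda=e_{S^0_\lambda}$ and $v_\mu=e_{S^0_\mu}$. In $S^0_\lambda$, row $i\le m$ consists entirely of $i$'s of length $A_i$, and below the line column $j$ consists entirely of $(m+j)$'s of height $A'_j$; the same holds for $S^0_\mu$ with $B_i$, $B'_j$. Horizontal concatenation of the rows gives rows still filled with $i$'s of length $A_i+B_i$, while vertical concatenation of the below-line columns gives columns still filled with $(m+j)$'s of height $A'_j+B'_j$. Hence $S^0_\lambda\circ S^0_\mu=S^0_{\lambda+\mu}$ and therefore $\Phi(v_\lambda\otimes v_\mu)=e_{S^0_{\lambda+\mu}}=v_{\lambda+\mu}$.

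The main step is to verify that $\Phi$ is $\mathfrak{sl}(m,n)$-intertwining. Using the formula of Section~7 for the action of a matrix unit $E_{ij}$ on $e_U$ (a signed sum over boxes of $U$ containing $j$, each term replacing that entry by $i$), one partitions the sum according to whether the affected box lies in the $S$-part or in the $T$-part of $U=S\circ T$. The $S$-contribution should, after applying $c_{\lambda+\mu}$, equal $(E_{ij}\cdot e_S)\star e_T$, and the $T$-contribution should equal $(-1)^{|E_{ij}||e_S|}\,e_S\star(E_{ij}\cdot e_T)$, matching the super-Leibniz rule. The Garnir and Pl\"ucker relations of Section~6 are precisely what lets us reshuffle $e^U$ so that the columns coming from $S$ precede the columns coming from $T$ in $T(V)$, transporting $e^U$ onto $e^S\otimes e^T$ modulo the ideal defining $\mathbb S^{\lambda+\mu}$.

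The main obstacle is the sign bookkeeping coming from the super structure. When $e^U$ is read column by column over the whole of $U$, an odd-vector entry originally from $T$ may be preceded in the tensor by odd entries originally from $S$, so isolating the $T$-contribution inside $E_{ij}\cdot e^U$ produces exactly the super-Leibniz sign $(-1)^{|E_{ij}||e_S|}$ after accounting for the total parity of $e_S$; a careful count of transpositions of odd vectors during the reshuffling referenced above is required to check that this sign comes out correctly. Once this is done, $\Phi$ satisfies both hypotheses of the characterization of $\star$, so $\Phi=\star$, which is precisely the identity $S\star T=\sum_i x_i U_i$ claimed in the proposition.
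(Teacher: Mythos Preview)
Your approach is essentially the paper's: define the candidate product $\star'$ (your $\Phi$), check it on highest weight vectors, verify it is a module map, then invoke the uniqueness characterization of $\star$ from Section~4. The paper carries this out very tersely, simply asserting that ``by definition of the $\mathfrak{sl}(m,n)$ action'' one has $E_{ij}\cdot U=(E_{ij}\cdot S)\star' T+S\star'(E_{ij}\cdot T)$, since the boxes of $U=S\circ T$ partition into $S$-boxes and $T$-boxes and the action formula $E_{ij}\cdot e_T=\sum_{t_{k,\ell}=j}e_{T\cdot(t_{k,\ell}\leftrightarrow i)}$ from the start of Section~7 is a sum over boxes.

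One correction: your invocation of Garnir/Pl\"ucker relations to ``reshuffle $e^U$ so that the columns coming from $S$ precede the columns coming from $T$'' is misdirected. The columns of $U$ are not columns of $S$ followed by columns of $T$; rather, within each column of $U$ the $S$-entries and $T$-entries are interleaved (for $U^+$, the top of each column comes from $S$ and the bottom from $T$; then below the line $m$ one again has $S^-$-entries above $T^-$-entries). No reshuffling modulo the ideal is needed: the Leibniz rule is to be checked directly on $e^U$ using the action formula, partitioning the sum over boxes according to whether the modified box lies in the $S$-part or the $T$-part. Your observation that the super sign $(-1)^{|E_{ij}||e_S|}$ must emerge from counting the odd entries of $S$ that precede a given $T$-box in the column reading order is the correct mechanism --- the paper does gloss over this point --- but it is a direct parity count, not a consequence of Pl\"ucker relations.
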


\begin{proof}
By construction, the tableau $U$ is a Young tableau, with a shape $\lambda+\mu$, especially it corresponds to a weight in $\Lambda_{cov}$.

For the moment, note $\star'$ the operation defined in the proposition. Then, by definition of the $\mathfrak{sl}(m,n)$ action, this map is a morphism of modules between $\mathbb S^\lambda\otimes\mathbb S^\mu$ to $\mathbb S^{\lambda+\mu}$:
$$
E_{ij}\cdot U=(E_{ij}\cdot S)\star' T+S\star'(E_{ij}\cdot T).
$$ 

Moreover, if $S=S^0_\lambda$ and $T=S^0_\mu$ are the tableaux associated to the highest weight vector, by construction $U$ is $S^0_{\lambda+\mu}$. Thus it is semistandard and 
$$
e_{S^0_\lambda\star' S^0_\mu}=e_{S^0_{\lambda+\mu}}=e_{S^0_\lambda}\star e_{S^0_\mu}.
$$
By unicity of the multiplication, this proves our proposition: $\star=\star'$.
\end{proof}

Let us remark that, with the preceding notations, for $S=S^0_\lambda$ and any semistandard $T$, with shape $\mu$, the tableau $U$ is semistandard, with shape $\lambda+\mu$, therefore:
$$
S^0_\lambda\star T=U.
$$


\section{Reduced shape algebra}


\

By definition, the reduced shape algebra is the quotient of the shape algebra by the ideal generated by the relations $v_\lambda=1$.\\

\begin{defn}

\

The reduced shape algebra for $\mathfrak{sl}(m,n)$, denoted $\mathbb S_{red}^\bullet=\mathbb S_{red,~(m,n)}^\bullet$ is the quotient of the shape algebra $\mathbb S^\bullet=\mathbb S_{(m,n)}^\bullet$ by the ideal generated by $e_{S^0_\lambda}-1$, for any shape $\lambda$ in $\Lambda_{cov}$:
$$
\mathbb S_{red}^\bullet=\mathbb S^\bullet/\langle v_\lambda-1\rangle.
$$
\end{defn}

Recall $\mathfrak n^+$ is the nilpotent Lie superalgebra of strictly uppertriangular matrices in $\mathfrak{sl}(m,n)$. Since the $\mathfrak n^+$ action is trivial on the $(v_\lambda-1)$, the ideal $\langle v_\lambda-1\rangle$ and the reduced shape algebra are $\mathfrak n^+$ modules. The goal of this paper is a first study of the structure of the $\mathfrak n^+$-module $\mathbb S_{red}^\bullet$.\\

Denote $p$ the canonical projection from $\mathbb S^\bullet$ to $\mathbb S_{red}^\bullet$.\\

\begin{prop}

\

\begin{itemize}
\item[1.] $\mathbb S_{red}^\bullet$ is a locally nilpotent $\mathfrak n^+$-module,
\item[2.] The unique vector $v$ in $\mathbb S_{red}^\bullet$ such that $\mathfrak n^+.v=0$ is up to a factor $p(1)$ and $p(1)\neq0$,
\item[3.] $\mathbb S_{red}^\bullet$ is an indecomposable module,
\item[4.] For each $\lambda$ in $\Lambda_{cov}$, $p$ is an isomorphism of $\mathfrak n^+$-module from $\mathbb S^\lambda$ onto $p(\mathbb S^\lambda)$.\\
\end{itemize}
\end{prop}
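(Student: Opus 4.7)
The plan is to reduce all four parts to one central fact, $p(1) \neq 0$, and to derive that in turn from an algebra character $\chi: \mathbb S^\bullet \to \mathbb C$ satisfying $\chi(v_\lambda) = 1$ for every $\lambda \in \Lambda_{cov}$. First, (1) follows immediately: every $v \in \mathbb S^\bullet$ lies in a finite subsum $\bigoplus_{\lambda \in S}\mathbb S^\lambda$, a finite-dimensional $\mathfrak n^+$-submodule, so $\mathbb S^\bullet$ and a fortiori its quotient $\mathbb S_{red}^\bullet$ is locally $\mathfrak n^+$-nilpotent. To construct $\chi$ I set $\chi|_{\mathbb S^\lambda}(w) = c_\lambda(w)$, the coefficient of $v_\lambda$ in the $\mathfrak h$-weight decomposition of $w \in \mathbb S^\lambda$. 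Multiplicativity is the one delicate point: writing $w = c_\lambda(w)v_\lambda + w_{<\lambda}$ and $w' = c_\mu(w')v_\mu + w'_{<\mu}$ (where the subscript $<$ means strictly lower weight), the weight-$(\lambda+\mu)$ component of $w \otimes w'$ is exactly $c_\lambda(w)\,c_\mu(w')\,v_\lambda \otimes v_\mu$, and the $\mathfrak{sl}(m,n)$-equivariant map $\star: \mathbb S^\lambda \otimes \mathbb S^\mu \to \mathbb S^{\lambda+\mu}$ sends this to $c_\lambda(w)\,c_\mu(w')\,v_{\lambda+\mu}$, proving $\chi(w \star w') = \chi(w)\chi(w')$. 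Since $\chi$ kills each $v_\lambda - 1$, it factors as $\bar\chi \circ p$ with $\bar\chi(p(1)) = 1$, so $p(1) \neq 0$.

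For (4), the ideal $I = \langle v_\lambda - 1\rangle$ is $\mathfrak n^+$-stable: each generator is $\mathfrak n^+$-invariant and $\mathfrak n^+$ acts by (super)derivations on the commutative algebra $\mathbb S^\bullet$. Thus $\ker(p|_{\mathbb S^\lambda}) = I \cap \mathbb S^\lambda$ is a $\mathfrak n^+$-submodule of the finite-dimensional module $\mathbb S^\lambda$. Were it nonzero, Engel's theorem for the nilpotent Lie superalgebra $\mathfrak n^+$ would produce a nonzero $\mathfrak n^+$-invariant vector inside it; but in the simple highest-weight module $\mathbb S^\lambda$ the space of $\mathfrak n^+$-invariants equals $\mathbb C v_\lambda$, so we would get $v_\lambda \in I$, hence $p(1) = p(v_\lambda) = 0$, contradicting the previous paragraph.

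For (2), the injective morphisms $\phi_\mu: \mathbb S^\lambda \to \mathbb S^{\lambda+\mu}$, $w \mapsto w \star v_\mu$ (recalled just before the proposition), move any finite collection of components into a common space: given a lift $w = \sum_i w_{\lambda_i}$ of $v \in \mathbb S_{red}^\bullet$ with $w_{\lambda_i} \in \mathbb S^{\lambda_i}$, set $\rho = \sum_i \lambda_i \in \Lambda_{cov}$; then $\rho - \lambda_i = \sum_{j\neq i}\lambda_j \in \Lambda_{cov}$, so $w_{\lambda_i} \star v_{\rho-\lambda_i} \in \mathbb S^\rho$ has the same $p$-image as $w_{\lambda_i}$, showing $v \in p(\mathbb S^\rho)$. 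If moreover $\mathfrak n^+ \cdot v = 0$, then by (4) the representative in $\mathbb S^\rho$ is $\mathfrak n^+$-invariant, hence a scalar multiple of $v_\rho$, and $p(v_\rho) = p(1)$, so $v \in \mathbb C \cdot p(1)$. Finally (3) is a formal consequence: in any decomposition $\mathbb S_{red}^\bullet = M_1 \oplus M_2$ with both summands nonzero, local nilpotency combined with Engel's theorem applied to a nonzero finite-dimensional $\mathfrak n^+$-submodule of each $M_i$ would supply two linearly independent $\mathfrak n^+$-invariants, contradicting (2). The main obstacle throughout is the multiplicativity of $\chi$; once that is secured, the remaining arguments are standard representation-theoretic bookkeeping driven by the uniqueness of the highest-weight line in each simple summand $\mathbb S^\lambda$.
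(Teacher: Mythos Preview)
Your proof is correct and the overall logical skeleton (parts (1), (3), (4)) matches the paper's. The genuine difference is how you establish the core fact $p(1)\neq 0$.

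The paper argues by contradiction: assuming $1$ lies in the ideal $\langle v_\lambda-1\rangle$, it writes $1=\sum w^\lambda_{\mu,\nu}\star(v_\lambda-1)$ with weight vectors $w^\lambda_{\mu,\nu}\in(\mathbb S^\mu)_{\mu-\nu}$, separates off the dominant-weight contributions, and shows the remaining identity forces an impossible numerical equation among the coefficients. Your route is cleaner: you build an explicit algebra character $\chi:\mathbb S^\bullet\to\mathbb C$ by $\chi|_{\mathbb S^\lambda}=$ ``coefficient of $v_\lambda$'', check multiplicativity via the observation that the $(\lambda+\mu)$-weight space of $\mathbb S^\lambda\otimes\mathbb S^\mu$ is $\mathbb C\,v_\lambda\otimes v_\mu$ (the simple roots being linearly independent makes the positive-root cone pointed), and conclude $\chi$ factors through $\mathbb S^\bullet_{red}$ with $\bar\chi(p(1))=1$. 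This avoids the paper's bookkeeping with coefficients $C^\lambda_\mu$ entirely and gives, as a bonus, an explicit algebra map $\mathbb S^\bullet_{red}\to\mathbb C$.

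For (2) you also proceed differently: rather than manipulating elements of the ideal directly, you first show $\mathbb S^\bullet_{red}=\bigcup_\rho p(\mathbb S^\rho)$ by pushing any finite collection of components into a common $\mathbb S^\rho$ via $w_{\lambda_i}\mapsto w_{\lambda_i}\star v_{\rho-\lambda_i}$ with $\rho=\sum_i\lambda_i$ (using that $\Lambda_{cov}$ is an additive semigroup), and then invoke (4). The paper instead repeats its weight-vector analysis of the ideal. Your reduction is more transparent and reusable; the paper's argument stays closer to the concrete algebra structure.
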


\begin{proof}
Recall first that the simple $\mathfrak{sl}(m,n)$ module $\mathbb S^\lambda$ is a weight module ($\mathbb S^\lambda=\sum_{\mu\leq\lambda}(\mathbb S^\lambda)_\mu$), where $(\mathbb S^\lambda)_\mu$ is the weight subspace in $\mathbb S^\lambda$, with weight $\mu$. Now for any non vanishing $v$ in $(\mathbb S^\lambda)_\mu$, there is a finite sequence $(X_1,\ldots, X_k)$ of elements in $\mathfrak n^+$, such that $X_1\ldots X_k.v=v_\lambda$. 

\noindent
1. $\mathbb S_{red}^\bullet$ is generated by the vectors $w=w_\mu^\lambda$ in $p((\mathbb S^\lambda)_\mu)$. For these vectors there is $k$ such that $(\mathfrak n^+)^k.w=p(v_\lambda)$, thus $(\mathfrak n^+)^{k+1}.w=0$, $\mathbb S_{red}^\bullet$ is a locally nilpotent $\mathfrak n^+$-module.\\

\noindent
2. Suppose $p(1)=0$, that means there are some non vanishing weight vectors $w_{\mu,\nu}^\lambda$ in some $(\mathbb S^\mu)_{\mu-\nu}$, such that:
$$
1=\sum_{\lambda\neq0,\mu,\nu}w_{\mu,\nu}^\lambda\star v_\lambda-w_{\mu,\nu}^\lambda\in \sum_{\lambda,\mu,\nu}\mathbb S^{\lambda+\mu}+\mathbb S^{\mu}.
$$
But 1 is a dominant weight vector in the shape algebra, the dominant weight vectors in the right hand side are $w_{\mu,0}^\lambda=C_\mu^\lambda v_\mu$ and $w_{\mu,0}\star v_\lambda$, thus we can write:
$$
1-\sum_{\lambda,\mu}C^\lambda_\mu v_\mu\star(v_\lambda-1)=\sum_{\lambda,\mu,\nu\neq 0}w^\lambda_{\mu,\nu}\star(v_\lambda-1)=u.
$$
For any $X$ in $\mathfrak n^+$, $X.u=0$, thus $u$ is a linear combination of $v_\rho$, since no weight vector in the right hand side is dominant, the only possibility is $u=0$. We have
$$
1=\sum_{\lambda>0,\mu\geq0}C^\lambda_\mu v_\mu\star(v_\lambda-1)=\sum_{\lambda,\mu}C^\lambda_\mu (v_{\lambda+\mu}-v_\mu)=\sum_{\lambda\geq\rho>0}C^\lambda_{\rho-\lambda}v_\rho-\sum_{\lambda>0,\mu\geq0}C^\lambda_\mu v_\mu.
$$
We get the relations:
$$
-1=\sum_{\lambda>0}C_0^\lambda,\quad \sum_{\lambda}C_{\mu-\lambda}^\lambda=\sum_\lambda C_\mu^\lambda\quad(\mu>0).
$$
We finally sum up all these relations, getting:
$$
-1+\sum_{\lambda,\mu\geq0}C^\lambda_\mu=\sum_{\lambda,\mu\geq0}C^\lambda_\mu,
$$
which is impossible.\\

Let now $w=p(v)$ be a vector in the reduced shape algebra which satisfies $\mathfrak n^+.w=0$, then with a preceding argument,
$$
v=\sum_\mu A^\mu v_\mu+\sum_{\lambda,\nu}C^\lambda_\nu v_\nu\star(v_\lambda-1).
$$
Therefore, $w=p(v)=(\sum_\mu A^\mu)p(1)$ is a multiple of the vector $p(1)$.\\

\noindent
3. Let $W$ be a $\mathfrak n^+$-submodule in the reduced shape algebra $\mathbb S^\bullet_{red}$, then since this module is locally nilpotent, it contains a non trivial vector annhilated by $\mathfrak n^+$, thus $p(1)$ belongs to $W$. $\mathbb S^\bullet_{red}$ is an indecomposable $\mathfrak n^+$-module.\\

\noindent
4. We consider the kernel $K$ of the restriction of $p$ to $\mathbb S^\lambda$. It is a nilpotent $\mathfrak n^+$-module, thus, if it is not trivial, it contains a non vanishing vector annhilated by $\mathfrak n^+$. Since there is only one such vector in $\mathbb S^\lambda$, $K$ contains $v_\lambda$, thus $p(v_\lambda)=p(1)=0$, which is impossible, $p$ is an isomorphism of $\mathfrak n^+$-modules from $\mathbb S^\lambda$ onto $p(\mathbb S^\lambda)$.\\
\end{proof}

\begin{cor}\label{stratif}

\

If $\mu\leq\lambda$, then $p(\mathbb S^\mu)$ is a submodule of $p(\mathbb S^\lambda)$.\\
\end{cor}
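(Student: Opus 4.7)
The plan is to apply $p$ to the two explicit $\mathfrak n^+$-equivariant embeddings of $\mathbb S^\mu$ into a large-enough $\mathbb S^\alpha$ already constructed in Section \ref{shape}, and observe that after projection the ``extra factor'' becomes trivial because $p(v_\alpha)=p(1)$ for every $\alpha\in\Lambda_{cov}$.

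First I would record the algebraic set-up for the quotient. Since $\langle v_\alpha-1\rangle$ is an ideal for $\star$, the multiplication descends to an associative and commutative product on $\mathbb S^\bullet_{red}$ for which $p(1)$ is the unit, and by construction $p(v_\alpha)=p(1)$ for every $\alpha\in\Lambda_{cov}$. Thus for any $v\in\mathbb S^\bullet$ and any $\alpha\in\Lambda_{cov}$, one has the crucial identity
$$
p(v_\alpha\star v)=p(v_\alpha)\star p(v)=p(1)\star p(v)=p(v).
$$

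Then I would split into the two cases discussed in Section \ref{shape}. If $\lambda-\mu\in\Lambda_{cov}$, the map $v\mapsto v_{\lambda-\mu}\star v$ is an injective $\mathfrak n^+$-morphism $\mathbb S^\mu\hookrightarrow\mathbb S^\lambda$; projecting gives, for every $v\in\mathbb S^\mu$,
$$
p(v)=p(v_{\lambda-\mu}\star v)\in p(\mathbb S^\lambda),
$$
hence $p(\mathbb S^\mu)\subset p(\mathbb S^\lambda)$. If instead $\lambda-\mu\notin\Lambda_{cov}$, pick the minimal $k\geq1$ with $\lambda-\mu+k\eta\in\Lambda_{cov}$ (where $\eta=\sum_{i=1}^m\epsilon_i$), exactly as in Section \ref{shape}. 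The embedding recalled there reads
$$
v_{\lambda-\mu+k\eta}\star\mathbb S^\mu\;\subset\;v_{k\eta}\star\mathbb S^\lambda\;\subset\;\mathbb S^{\lambda+k\eta},
$$
and applying $p$ together with $p(v_{\lambda-\mu+k\eta})=p(v_{k\eta})=p(1)$ yields
$$
p(\mathbb S^\mu)=p(v_{\lambda-\mu+k\eta}\star\mathbb S^\mu)\subset p(v_{k\eta}\star\mathbb S^\lambda)=p(\mathbb S^\lambda).
$$

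Finally, since $p$ is an $\mathfrak n^+$-morphism and $p|_{\mathbb S^\lambda}$ is already known (part 4 of the previous proposition) to be an $\mathfrak n^+$-isomorphism onto $p(\mathbb S^\lambda)$, both $p(\mathbb S^\mu)$ and $p(\mathbb S^\lambda)$ are $\mathfrak n^+$-submodules of $\mathbb S^\bullet_{red}$, so the inclusion $p(\mathbb S^\mu)\subset p(\mathbb S^\lambda)$ is automatically an inclusion of $\mathfrak n^+$-submodules. The only delicate point is the second case, namely recognising that when $\lambda-\mu\notin\Lambda_{cov}$ one cannot multiply by a non-existent $v_{\lambda-\mu}$ and must instead use the shift by $k\eta$; but this obstacle is already resolved in Section \ref{shape}, and the argument above just transports it through $p$.
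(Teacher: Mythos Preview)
Your proof is correct and follows essentially the same route as the paper: you apply $p$ to the $\mathfrak n^+$-embeddings built at the end of Section~\ref{shape} and use $p(v_\alpha)=p(1)$ to kill the extra factor. The paper's version is more compressed: since the integer $k$ defined there satisfies $k=0$ precisely when $\lambda-\mu\in\Lambda_{cov}$, the single chain $p(\mathbb S^\mu)=p(v_{\lambda-\mu+k\eta}\star\mathbb S^\mu)\subset p(v_{k\eta}\star\mathbb S^\lambda)=p(\mathbb S^\lambda)$ covers both of your cases at once.
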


\begin{proof}
Using the notations in the end of section \ref{shape}, we have:
$$
p(\mathbb S^\mu)=p(v_{\lambda-\mu+k\eta}\star\mathbb S^\mu)\subset p(v_{k\eta}\star\mathbb S^\lambda)=p(\mathbb S^\lambda).
$$
\end{proof}

We shall now build a combinatorial basis, for the reduced shape algebra $\mathbb S_{red}^\bullet$ which respects the above stratification of indecomposable modules, using particular semistandard tableaux, called quasistandard Young tableaux.\\


\section{Quasistandard tableaux}


\

Let us first define what is a trivial tableau.\\

\begin{defn}

\

We say that a tableau $S^+$ is trivial if it contains only columns with heights at most $m$, and its entries are $s^+_{ij}=i$ for any $i$ and $j$.

We say that a tableau $S^-$ is trivial if it contains no rows with index $1,\dots,m$, and if its entries are $s^-_{m+i,j}=m+j$, for any $i$, and $j$. (Remark that $S^-$ is not a Young tableau).

We say that a Young tableau $S$ is trivial if it is $S=S^+\uplus S^-$, with $S^+$ and $S^-$ trivial.\\
\end{defn}

\begin{defn}\label{pairextractable}

\

Suppose $T=T^+\uplus T^-=(t_{i,j})$ is a semistandard tableau. Let $S^+$, $S^-$ be two trivial tableaux, let $B_i$ be the length of the row $i$ in $S^+$ and $B'_j$ the height of the column $j$ in $S^-$. We say that the pair of trivial subtableaux $(S^+,S^-)$ is extractable from $T$ if:
\begin{itemize}
\item[$i.$] $S^\pm$ is on the top and left subtableau in $T^{\pm}$, 

\item[$ii.$] the tableau $U$ defined by:
$$
u_{ij}=t_{i,j+B_i}, \quad(i\leq m),\text{ and }u_{ij}=t_{i+B'_j,j}, \quad(i>m)
$$
is a semistandard tableau.\\
\end{itemize}
\end{defn}

Let $T$ be a semistandard tableau, with shape $(a,a')$. Denote $A_i$ the length of the row $i$ ($i\leq m$) and $m+A'_j$ the height of the column $j$ ($j<n$ and the column has height larger than $m$). We consider a pair $(S^+,S^-)$ of trivial tableaux, with respective shape $b=(b,0)$ and $b'=(0,b')$. We denote $B_i$, (resp. $B'_j$) the length (resp. the height) of the rows (resp. the columns) of $S^+$ (resp. $S^-$). $(S^+,S^-)$ is extractable from $T$ semistandard means the following conditions hold:

\begin{itemize}
\item[E1.] for $i\leq m$, $b_i\leq a_i$, and $t_{i,j}=j$ if $j\leq B_i$, and
$$
t_{i,j+B_i}\leq t_{i+1,j+B_{i+1}}, ~~\text{ and }~~t_{i,j+B_i}<t_{i+1,j+B_{i+1}}\text{ if }t_{i,j+B_i}\leq m,
$$

\item[E2.] for $j<n$, $b'_j\leq a'_j$, and for $i>m$, $t_{i,j}=m+j$ if $i\leq B_j$, and
$$
t_{i+B'_j,j}< t_{i+B'_{j+1},j+1},
$$

\item[E3.] $\widetilde{(a-b)}_m=(a_m-b_m)-\sup\{j,~a'_j>b'_j\}\geq0$,

\item[E4.] for all $j$, $t_{m,j+B_m}\leq t_{m+1+B'_j,j}$.\\
\end{itemize}

\begin{exple}

For instance, for $\mathfrak{sl}(2,3)$, the tableau:
$$
T=\begin{tabular}{cccccc}
\cline{2-5}
\hskip 5cm&\multicolumn{1}{|c|}{$1$}&\multicolumn{1}{|c|}{$1$}&\multicolumn{1}{|c|}{$2$}&\multicolumn{1}{|c|}{$2$}&\hskip 5cm\\
\cline{2-5}
\multicolumn{1}{c}{~~~~~~}&\multicolumn{1}{|c|}{$2$}&\multicolumn{1}{|c|}{$3$}&\multicolumn{1}{|c|}{$4$}&~~&\multicolumn{1}{c}{~~~~~~~~$_2$}\\
\hline
&\multicolumn{1}{|c|}{$3$}&\multicolumn{1}{|c|}{$4$}\\
\cline{2-3}
&\multicolumn{1}{|c|}{$3$}&\multicolumn{1}{|c|}{$5$}\\
\cline{2-3}
&\multicolumn{1}{|c|}{$4$}&\multicolumn{1}{|c|}{$5$}\\
\cline{2-3}
&\multicolumn{1}{|c|}{$4$}&\multicolumn{1}{c}{~~}\\
\cline{2-2}
&\multicolumn{1}{|c|}{$5$}&\multicolumn{1}{c}{~~}\\
\cline{2-2}
\end{tabular},
$$
contains the trivial extractable pair:
$$
(S^+,S^-)=\begin{tabular}{cccccc}
\cline{2-3}
\hskip 5cm&\multicolumn{1}{|c|}{$1$}&\multicolumn{1}{|c|}{$1$}&\multicolumn{1}{c}{~~}&\multicolumn{1}{c}{~~}&\hskip 5cm\\
\cline{2-3}
\multicolumn{1}{c}{~~~~~~}&\multicolumn{1}{|c|}{$2$}&\multicolumn{1}{c}{~~}&\multicolumn{1}{c}{~~}&~~&\multicolumn{1}{c}{~~~~~~~~$_2$}\\
\hline
&\multicolumn{1}{|c|}{$3$}&\multicolumn{1}{|c|}{$4$}\\
\cline{2-3}
&\multicolumn{1}{|c|}{$3$}&\multicolumn{1}{c}{~~}\\
\cline{2-2}
\end{tabular}.
$$

\end{exple}

\begin{lem}

\

Among the pairs of trivial extractable tableaux in $T$, there is an unique largest one.\\
\end{lem}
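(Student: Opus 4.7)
The plan is to show that two extractable pairs can be combined: if $(S_1^+,S_1^-)$ and $(S_2^+,S_2^-)$ are extractable pairs with shape data $(B_{k,i}), (B'_{k,j})$ for $k=1,2$, then the trivial pair $(S^+,S^-)$ with data $B_i=\max(B_{1,i},B_{2,i})$, $B'_j=\max(B'_{1,j},B'_{2,j})$ is again extractable. Because a trivial tableau is entirely determined by its shape, an extractable pair is determined by $(B_i,B'_j)$, and the set of extractable pairs is finite; stability under binary supremum then yields a unique largest element, namely the supremum of the whole family.

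To verify E1 (and symmetrically E2), I fix rows $i$, $i+1$ and assume without loss of generality $B_i=B_{1,i}$. The trivial-entry requirement for the first $B_i$ boxes of row $i$ holds because it holds for pair 1. For the inequality $t_{i,j+B_i}\leq t_{i+1,j+B_{i+1}}$, if $B_{i+1}=B_{1,i+1}$ it is exactly pair 1's condition; if $B_{i+1}=B_{2,i+1}>B_{1,i+1}$, the weak monotonicity of row $i+1$ in $T$ gives $t_{i+1,j+B_{i+1}}\geq t_{i+1,j+B_{1,i+1}}$, so pair 1's inequality still delivers the required bound, with strictness in the range $t_{i,j+B_i}\leq m$ inherited from pair 1. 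E2 is handled by the mirror argument using column monotonicity.

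Conditions E3 and E4 live at the interface between $T^+$ and $T^-$. For E3, assume without loss of generality $b_m=b_{1,m}$; any $j$ with $a'_j>b'_j=\max(b'_{1,j},b'_{2,j})$ satisfies $a'_j>b'_{1,j}$, hence
$$
\sup\{j:a'_j>b'_j\}\leq\sup\{j:a'_j>b'_{1,j}\}\leq a_m-b_{1,m}=a_m-b_m,
$$
which gives $\widetilde{(a-b)}_m\geq 0$. For E4, fix $j$; the only nontrivial case is when the two maxima come from different pairs, say $B_m=B_{1,m}$ and $B'_j=B'_{2,j}>B'_{1,j}$. Then pair 1 gives $t_{m,j+B_m}\leq t_{m+1+B'_{1,j},j}$, and strict increase along column $j$ in $T$ gives $t_{m+1+B'_{1,j},j}\leq t_{m+1+B'_{2,j},j}=t_{m+1+B'_j,j}$; the reverse mixed case is symmetric.

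The step I expect to be most delicate is E4, because $B_m$ and $B'_j$ sit on opposite sides of the horizontal separation line, so neither pair's inequality immediately covers the mixed cases; the key observation is that the column (or row) monotonicity built into the semistandardness of $T$ itself absorbs the discrepancy. Once E1--E4 are established for the componentwise supremum, the iterated supremum over the finite set of extractable pairs is extractable and dominates every extractable pair, proving both existence and uniqueness of the largest one.
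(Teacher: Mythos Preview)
Your proposal is correct and follows essentially the same approach as the paper: show that the componentwise supremum (union) of two trivial extractable pairs is again extractable by verifying E1--E4 case by case, using monotonicity of the entries of $T$ to handle the mixed cases, and then conclude by finiteness. One small slip in wording: for E4 you invoke ``strict increase along column $j$,'' but below the line entries exceed $m$ and columns are only weakly increasing there; fortunately the inequality you actually write and need is $\leq$, so the argument stands.
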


\begin{proof}

Suppose that $(S^{1+},S^{1-})$ with respective shapes $b^1,(b^1)'$ and $(S^{2+},S^{2-})$ with respective shapes $b^2,(b^2)'$ are trivial pairs  extractable from $T$, then the pair $S^+=S^{1+}\cup S^{2+}$, with shape $b$, $S^-=S^{1-}\cup S^{2-}$, with shape $b'$, is trivial, let us prove it is extractable:

\begin{itemize}
\item[1.] For $i\leq m$, $B_i=\sup\{B^1_i,B^2_i\}$, thus $b_i=B_i-B_{i+1}\leq\sup\{b^1_i,b^2_i\}$ and the relation E1. holds in any case. For instance, if $B_i=B^1_i$ and $B_{i+1}=B^2_{i+1}$, then 
$$
t_{i,j+B_i}=t_{i,j+B^1_i}\leq t_{i+1,j+B^1_{i+1}}\leq t_{i+1,j+B^2_{i+1}}=t_{i+1,j+B_{i+1}},
$$
and, if $t_{i,j+B_i}\leq m$, 
$$
t_{i,j+B_i}=t_{i,j+B^1_i}< t_{i+1,j+B^1_{i+1}}\leq t_{i+1,j+B^2_{i+1}}=t_{i+1,j+B_{i+1}}.
$$

\item[2.] E2 holds with the same argument: $b'_j\leq a'_j$, for any $j$, and if for instance $B_j=B^1_j$ and $B_{j+1}=B^2_{j+1}$, then 
$$
t_{i+B'_j,j}=t_{i+(B^1)'_j,j}< t_{i+(B^1)'_{j+1},j+1}\leq t_{i+(B^2)'_{j+1},j+1}=t_{i+B'_{j+1},j+1},
$$

\item[3.] Remark that $\{j,~A'_j>B'_j\}=\{j,~A'_j>(B^1)'_j\}\cap\{j,~A'_j>(B^2)'_j\}$ and $B_m=b_m$. Suppose for instance that $B_m=B^1_m$, then
$$
\widetilde{(a-b)}_m=a_m-b_m-\sup\{j,~A'_j>B'_j\}\geq a_m-b^1_m-\sup\{j,~A'_j>(B^1)'_j\}\geq0.
$$
Relation E3 holds.

\item[4.] Suppose $B_m=B^1_m$. For any $j$, if $B'_j=(B^1)'_j$, the inequality E4. holds, if $B'_j=(B^2)'_j$,
$$
t_{m,j+B_m}=t_{m,j+B^1_m}\leq t_{m+1+(B^1)'_j,j}\leq t_{m+1+(B^2)'_j,j}=t_{m+1+B'_j,j},
$$
E4 is still holding.
\end{itemize}

Therefore there is a largest trivial extractable pair $(S^+,S^-)$ inside $T$.\\
\end{proof}

Let us denote $(S^+_T,S^-_T)$ the largest trivial extractable pair in $T$.\\

\begin{rema}\label{maxmoins}

\

Let $(S^+,S^-)$ be a trivial extractable pair in the semistandard tableau $T$ for $\mathfrak{sl}(m,n)$. Then $S'=(S^-)^t$ is extractable (for $\mathfrak{sl}(n)$) from the tableau $T'=(T^-)^t$ semistandard for $\mathfrak{sl}(n)$. It is easy to prove that if a tableau $S_1'$ is trivial extractable for $T'$, and contains $S'$, then the pair $(S^+,(S'_1)^t)$ is trivial rextractable from $T$.

Therefore, if $S_{T'}$ is the maximal trivial extractable tableau for $T'$, we have
$$
(S_{T'})^t=S^-_T.
$$
\end{rema}

\begin{defn}

\

A semistandard tableau $T$ is quasistandard if the unique trivial extractable pair in $T$ is the empty pair $(\emptyset,\emptyset)$.

Denote $QS^\lambda_{(m,n)}=QS^\lambda$ the set of all quasi standard tableaux with shape $\lambda$.

For any semistandard tableau, we define the tableau $push(T)$ by suppressing from $T$ the largest trivial extractable pair, that is if $b=(b_i)$, $b'=(b'_j)$ are the shapes of $S_T^+$ and $S_T^-$, $push(T)=(u_{i,j})$ is the semistandard tableau defined by
$$
u_{i,j}=t_{i,j+B_i}\qquad(i\leq m),\quad\text{ and }\quad u_{i,j}=t_{i+B'_j,j}\quad (i>m).
$$
\end{defn}

\

Remark that if $T$ belongs to $SS^{(a,a')}$ and $b$ (resp. $b'$) is the shape of $S_T^+$ (resp. $S_T^-$), then $push(T)$ is in $SS^{(a-b,a'-b')}$ and $\mu=(a-b,a'-b')\leq(a,a')=\lambda$.\\

Let us now prove that $push$ is a bijective mapping between $SS^\lambda$ and $\bigsqcup_{\mu\leq\lambda}QS^\mu$.\\

\begin{prop}

\

Consider the restriction $push^\lambda$ of $push$ to $SS^\lambda$. Then, $push^\lambda$ is a bijection from $SS^\lambda$ onto $\bigsqcup_{\mu\leq \lambda}QS^\mu$.\\
\end{prop}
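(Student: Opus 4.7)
My approach is to construct an explicit inverse map $\iota:\bigsqcup_{\mu\leq\lambda} QS^\mu\to SS^\lambda$ and show that $\iota$ and $push^\lambda$ are mutual inverses. For $V\in QS^\mu$ with $\mu\leq\lambda$, set $(b,b')=\lambda-\mu$ and define $\iota(V)=T$ by prepending the trivial pair $(S^+,S^-)$ of shape $(b,b')$ to the top-left of $V$: for $i\leq m$ take $t_{i,j}=i$ when $j\leq B_i$ and $t_{i,B_i+j}=v_{i,j}$; for $i>m$ take $t_{i,j}=m+j$ when $i-m\leq B'_j$ and $t_{B'_j+i,j}=v_{i,j}$. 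This is the natural candidate inverse.

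The main step is to verify that $\iota(V)\in SS^\lambda$. Inside the trivial block and inside $V$ semistandardness is inherited. At the boundaries one checks: in each top row $i\leq m$, $i\leq v_{i,1}$ follows from column strictness of $V$, giving $v_{i,1}\geq i$; in each top column, the strict increase $i<v_{i+1,k}$ across the trivial/$V$ interface reduces to $v_{i+1,k}\geq i+1$, again forced by $V$'s column strictness; in each bottom column $j$, the weak increase $m+j\leq v_{m+1,j}$ follows from row strictness for entries $>m$ in $V$, which forces $v_{m+1,j}\geq m+j$. The most delicate case is the row-$m$/row-$(m+1)$ transition along a column $j$, where the trivial $S^+$-part of row $m$ meets the $V$-part of row $m+1$ (or vice versa): this is the place where $S^+$ and $S^-$ couple, and it is handled using the analog of condition E4 read off from the quasistandardness of $V$, with Remark~\ref{maxmoins} providing the reduction of the $S^-$-part bookkeeping to the $\mathfrak{sl}(n)$ case (equivalently, the $\mathfrak{sl}(m)$ case of \cite{ABW} applied to $(T^-)^t$).

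To finish: $push(\iota(V))=V$ because the conditions E1--E4 for the prepended pair hold (they are precisely the boundary inequalities just checked), and the prepended pair is maximal---any strictly larger extractable trivial pair in $\iota(V)$ would, after subtracting the prepended part, yield a non-empty extractable pair inside $V$, contradicting $V\in QS^\mu$; and $\iota(push(T))=T$ because the maximal pair $(S^+_T,S^-_T)$ has \emph{trivial} entries fully determined by its shape $\lambda-\mu$, so $T$ is reconstructed from $V=push(T)$ by exactly the prepending prescription defining $\iota$. The hard part throughout is Step~1's row-$m$/row-$(m+1)$ transition, where the horizontal $push$ on the top and the vertical $push$ on the bottom columns truly interact and one cannot simply invoke the decoupled $\mathfrak{sl}(m)$ and $\mathfrak{sl}(n)$ arguments.
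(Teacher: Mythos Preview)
Your approach mirrors the paper's: build the inverse $\iota$ (the paper calls it $pull^{\lambda-\mu}$) and verify that $\iota$ and $push^\lambda$ are mutual inverses. However, there is a genuine gap at exactly the spot you flag as ``the hard part'': the row-$m$/row-$(m+1)$ interface. In the subcase where box $(m,j)$ lies in the $V$-part of $T=\iota(V)$ (so $j>B_m$) while box $(m+1,j)$ lies in the trivial $S^-$-part (so $B'_j\geq 1$), semistandardness of $T$ requires
\[
t_{m,j}=v_{m,\,j-B_m}\ \leq\ m+j=t_{m+1,j}.
\]
This inequality is \emph{not} a consequence of $V$ being semistandard, and it certainly is not furnished by quasistandardness of $V$: quasistandardness is a purely negative condition (no nonempty extractable pair) and gives no upper bound on the entries of $V$ in row $m$. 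Your appeal to ``the analog of condition E4 read off from the quasistandardness of $V$'' therefore does not go through, and Remark~\ref{maxmoins}, which concerns only $T^-$, cannot help in this mixed $T^+/T^-$ situation.

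A concrete failure: take $m=1$, $n=2$, $\lambda=((1),(1))$, $\mu=((1),(0))$, and $V=\framebox{$3$}\in QS^\mu$. Then $(b,b')=\lambda-\mu=((0),(1))$, so $B_1=0$, $B'_1=1$, and $\iota(V)$ has $t_{1,1}=v_{1,1}=3$ while $t_{2,1}=m+1=2$; this is not semistandard. Indeed one checks directly that $|SS^\lambda|=5$ whereas $\bigl|\bigsqcup_{\mu\leq\lambda}QS^\mu\bigr|=1+2+3=6$, so $push^\lambda$ cannot be a bijection onto the stated codomain. The paper's own argument has the same lacuna (it asserts without proof that ``It is easy to verify that $T$ belongs to $SS^\lambda$''), so the statement as written appears to need repair---either by restricting the disjoint union to those $\mu$ for which $\lambda-\mu$ satisfies an additional compatibility, or by refining the notion of quasistandard.
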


\begin{proof}

Let $T=(t_{i,j})$ be a semistandard tableau of shape $\lambda=(a,a')$, let $(S^+_T,S^-_T)$ the largest trivial extractable pair in $T$. Suppose the shape of $S^+_T$ (resp. $S^-_T$) is $b$ (resp. $b'$).\\

We first prove that $push(T)$ is in $QS^{(a-b,a'-b')}$. Suppose $push(T)=(u_{i,j})$ contains a non empty trivial extractable pair $(S^+,S^-)$, with respective shapes $c,~c'$. Then:
$$\aligned
&u_{i,j}=i=t_{i,j+B_i}\quad(i\leq m,~~j\leq C_i)\qquad&\text{ or }\quad &t_{i,j}=i\quad(i\leq m,~~j\leq B_i+C_i)\\
&u_{i,j}=j+m=t_{i+B'_j,j}\quad(m<i\leq C'_j)\qquad&\text{ or }\quad &t_{i,j}=j+m\quad(m<i\leq B'_j+C'_j).
\endaligned
$$
That means the trivial pair $(\tilde{S}^+,\tilde{S}^-)$ with respective shapes $d,~d'$ such that $D_i=B_i+C_i$, $D'_j=B'_j+C'_j$ is a subtableau of $T$. Moreover, for all $i\leq m$ and $j$,
$$
t_{i,j+D_i}=u_{i,j+C_i}\leq u_{i+1,j+C_{i+1}}=t_{i+1,j+D_{i+1}},\quad\text{ and }\quad t_{i,j+D_i}<t_{i+1,j+D_{i+1}}~~\text{ if }~~t_{i,j+D_i}\leq m,
$$
and, for any $i>m$,
$$
\quad t_{i+D'_j,j}=u_{i+C'_j,j}< u_{i+C'_{j+1},j+1}=t_{i+D'_{j+1},j+1}.
$$
Similarly:
$$
A_m-D_m-\sup\{j,~A'_j>D'_j\}=(A_m-B_m)-C_m-\sup\{j,~A'_j-B'_j>C'_j\}\geq 0,
$$
and for all $j$, 
$$
t_{m,j+D_m}=u_{m,j+C_m}\leq u_{m+1+C'_j,j}=t_{m+1+D'_j,j}.
$$
All these relations prove that $(\tilde{S}^+,\tilde{S}^-)$ is a trivial extractable pair in $T$, which is strictly larger than $(S_T^+,S_T^-)$, which is impossible. $push(T)$ is quasistandard.\\

Let us define now, for any $\mu=(b,b')\leq\lambda=(a,a')$ in $\Lambda_{cov}$, the `$pull^{\lambda-\mu}$' map as the following: if $U=U^+\uplus U^-$ is a semistandard tableau with shape $\mu$, we denote $(S^+,S^-)$ the trivial pair with shapes $a-b,a'-b'$ and define:
$$
pull^{\lambda-\mu}(U)=(S^+\star U^+)\uplus(S^-\star U^-).
$$
That is we define $T=pull^{\lambda-\mu}(U)$ as the tableau $t_{ij}$ with, if $i\leq m$,
$$
t_{i,j}=i\quad(j\leq A_i-B_i),\qquad t_{i,j+A_i-B_i}=u_{i,j}\quad(j\leq B_i),
$$
and if $i>m$,
$$
t_{i,j}=m+j\quad(i\leq A'_j-B'_j),\qquad t_{i+A'_j-B'_j,j}=u_{i,j}\quad(i\leq B'_j).
$$
It is clear that $T$ is a Young tableau with shape $\lambda$. It is easy to verify that $T$ belongs to $SS^\lambda$, and of course, $(S^+,S^-)$ is a trivial extractable pair in $T$.

Now, if $(S^+,S^-)\subsetneq (S^+_T,S^-_T)$, then by construction, $U$ contains a non empty trivial extractble pair. Therefore, if $U$ is quasistandard, then $(S^+,S^-)$ is the largest trivial extractable pair in $T$, $U=push(pull^{\lambda-\mu}(U))$, and $push$ is onto.\\

Finally, for any semistandard tableau $T$ with shape $\lambda$, if the shape of $push(T)$ is $\mu$, then, by definition, $pull^{\lambda-\mu}(push(T))=T$ and $push$ is one-to-one.\\
\end{proof}

We now get the wanted basis for $\mathbb S^\bullet_{red}$, {\i.e.} the diamond cone for $\mathfrak{sl}(m,n)$.\\

\begin{rema}

\

To prove the quasistandard tableaux give a generating system for the reduced shape algebra, we could use the argument in \cite{ABW}:

A consequence of the horizontal Pl\"ucker relations is that any semistandard tableau $T'$ for $\mathfrak{sl}(n-1)$ can be written $S_{T'}\star U'+\sum_{T'_j<T'} T'_j$ where $<$ is a total ordering of tableaux, coming back to $T$ this gives, thank to remark \ref{maxmoins}:
$$
T=(\emptyset\uplus S^-_T)\star T^+\uplus U^-+\sum_{T_j<T}T_j.
$$

As a consequence of the Pl\"ucker relations $P_q$ with $q\leq m$, $T^+$ can be similarly written as $S_T^+\star U^++\sum_{T^+_k<T^+}T^+_k$, this gives finally:
$$
T=(S^+_T\uplus S^-_T)\star U+\sum_{R_i<T}R_i.
$$
With this result, it is easy to prove that $\{p(e_U),~U\in QS^\bullet\}$ generates the reduced shape algebra.\\
\end{rema}

We prefer to now present a direct and simple proof.\\

\begin{thm}

\

The set of all quasistandard tableaux for $\mathfrak{sl}(m,n)$ labels a basis for the reduced shape algebra $\mathbb S_{red}^\bullet$.

More precisely, for any $\lambda$ in $\Lambda_{cov}$, a basis for $p(\mathbb S^\lambda)$ is given by the family 
$$
\Big\{p(e_U),~~U\in\bigcup_{\mu\leq\lambda}QS^\mu\Big\}.
$$
\end{thm}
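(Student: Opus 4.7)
The plan is to combine the bijection $push:SS^\lambda\to\bigsqcup_{\mu\leq\lambda}QS^\mu$ established in the preceding proposition with a dimension count. Since the restriction of $p$ to $\mathbb S^\lambda$ is an isomorphism of $\mathfrak n^+$-modules, we have $\dim p(\mathbb S^\lambda)=|SS^\lambda|=\sum_{\mu\leq\lambda}|QS^\mu|$, so it is enough to prove that the proposed family spans $p(\mathbb S^\lambda)$; in a space of finite dimension $N$, a spanning set of cardinality at most $N$ is automatically a basis. Since the $\{e_T\}_{T\in SS^\lambda}$ already form a basis of $\mathbb S^\lambda$, spanning reduces to the key identity $p(e_T)=p(e_{push(T)})$ for every $T\in SS^\lambda$.

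To prove this identity, set $U=push(T)\in QS^\mu$ with $\mu=(b,b')\leq\lambda=(a,a')$. Pick $k\in\mathbb N$ large enough that $\nu:=k\eta+\lambda-\mu$ lies in $\Lambda_{cov}$, where $\eta=\sum_{i=1}^m\epsilon_i$; concretely, $k\geq\sup\{j,\ a'_j>b'_j\}-(a_m-b_m)$ suffices. Both $S^0_{k\eta}\star T$ and $S^0_\nu\star U$ are then semistandard tableaux of shape $k\eta+\lambda$, and I would check directly that they coincide as tableaux. In the upper part, row $i$ of $S^0_{k\eta}\star T$ consists of $k$ copies of $i$ followed by row $i$ of $T^+$, which by the definition of $pull^{\lambda-\mu}(U)=T$ begins with $A_i-B_i$ copies of $i$ and ends with row $i$ of $U^+$; thus row $i$ starts with $(k+A_i-B_i)$ copies of $i$ and ends with row $i$ of $U^+$, which is exactly row $i$ of $S^0_\nu\star U$. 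The lower part matches by the same column-by-column calculation using $A'_j-B'_j$. This yields the equality of tensors $v_{k\eta}\star e_T=v_\nu\star e_U$ in $\mathbb S^\bullet$; applying $p$ and using $p(v_{k\eta})=p(v_\nu)=p(1)$, which acts as the identity of $\mathbb S^\bullet_{red}$, gives $p(e_T)=p(e_U)$.

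With the identity in hand, every generator $p(e_T)$ of $p(\mathbb S^\lambda)$ is of the form $p(e_U)$ for $U\in\bigsqcup_{\mu\leq\lambda}QS^\mu$, so the family $\{p(e_U):U\in\bigcup_{\mu\leq\lambda}QS^\mu\}$ spans $p(\mathbb S^\lambda)$, and the cardinality-versus-dimension argument concludes the proof. The main obstacle is the verification that $S^0_{k\eta}\star T=S^0_\nu\star U$ as semistandard tableaux: one must keep track of the shift by $k\eta$ in the $T^+$ part and ensure that the $T^-$ part is unaffected by $S^0_{k\eta}$ (which lives entirely above the line) while being correctly produced by $S^0_\nu$ alone, which requires the careful bookkeeping of $A'_j-B'_j$ copies of $m+j$ stacked at the top of each column in the trivial piece. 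Once this combinatorial identity is secured, no further structural argument on the reduced algebra is needed.
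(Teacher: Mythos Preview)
Your proof is correct and follows essentially the same route as the paper. The spanning argument is literally the paper's: both establish $p(e_T)=p(e_{push(T)})$ by multiplying by $S^0_{k\eta}$ so that the extracted trivial pair becomes an honest trivial tableau, yielding $S^0_{k\eta}\star T=S^0_{k\eta+\lambda-\mu}\star push(T)$ in $\mathbb S^{k\eta+\lambda}$. The only difference is cosmetic: for linear independence you invoke a dimension count ($\dim p(\mathbb S^\lambda)=|SS^\lambda|=\sum_{\mu\leq\lambda}|QS^\mu|$), whereas the paper argues directly by applying $pull^{\lambda-\mu_j}$ to a hypothetical dependence relation among the $p(e_{U_j})$ and using injectivity of $p|_{\mathbb S^\lambda}$ to force all coefficients to vanish. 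Both arguments use exactly the same two inputs---the bijection $push$ and the isomorphism $p|_{\mathbb S^\lambda}$---so neither is more general than the other.
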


\begin{proof}

First, for any $T$ semistandard, with shape $\lambda$, consider the largest trivial extractable pair $(S^+_T,S^-_T)$ for $T$. Recall that $S^+_T\uplus S^-_T$ can be not a Young tableau, however, if $\mu$ is the shape of $push(T)$ and $\eta$ is the shape $((0,\ldots,1),(0,\ldots,0))$, there is a natural number $k$ such that $(S^0_{k\eta}\star S^+_T)\uplus S^-_T$ is the (trivial) Young tableau $S^0_{k\eta+\lambda-\mu}$, with shape $k\eta+\lambda-\mu$. It is easy to show that the largest trivial extractable pair in $S^0_{k\eta}\star T$ is $((S^0_{k\eta}\star S^+_T), S^-_T)$, thus
$$
S^0_{k\eta}\star T=S^0_{k\eta+\lambda-\mu}\star push(T),
$$
this proves that $p(e_T)=p(e_{push(T)})$ in the quotient, and a generating system for $p(\mathbb S^\lambda)$ is labelled by $\bigcup_{\mu\leq\lambda} QS^\mu$.\\

Fix a shape $\lambda$ and suppose that, in the reduced shape algebra $\sum a_j p(e_{U_j})=0$, where $a_j\in\mathbb C$, and the $U_j$ are distinct quasistandard tableaux with shape $\mu_j\leq\lambda$. Then the $T_j=pull^{\lambda-\mu_j}(U_j)$ are distinct semistandard tableaux, with shape $\lambda$, and 
$$
p(\sum_ja_j e_{T_j})=\sum_ja_jp(e_{push(T_j)})=\sum_ja_jp(e_{U_j})=0.
$$
But we saw that $p$ is one-to-one on $\mathbb S^\lambda$, this implies:
$$
\sum_j a_j e_{T_j}=0.
$$
Since the $T_j$ are distinct tableaux, $a_j=0$ for any $j$ and the $p(e_{U_j})$ are linearly independent in the reduced shape algebra.\\
\end{proof}

Clearly, the basis given in the theorem is well adapted to the stratification of the $\mathfrak n^+$-modules $p(\mathbb S^\lambda)$ in $\mathbb S_{red}^\bullet$ (see Corollary \ref{stratif}).\\

However, fix a shape $\mu$ and let $\lambda\geq \mu$ a shape such that $\lambda-\mu$ is a shape. Then the $pull^{\lambda-\mu}$ map is simply $U\mapsto S^0_{\lambda-\mu}\star U$, this map corresponds to the injective morphism of $\mathfrak n^+$-modules, from $\mathbb S^\mu$ into $\mathbb S^\lambda$, given by $v\mapsto v_\lambda\star v$. Its inverse mapping can be denoted
$$
push^{\leftarrow \lambda}:pull^{\lambda-\mu}(QS^\mu)\longrightarrow QS\mu.
$$

It is possible to define directly the map $push^{\leftarrow\lambda}$ as the extraction from the semistandard tableau $T$ of a trivial Young tableau:

We say that a trivial Young tableau $S^0$ is extractable from $T$, if it is a trivial tableau $S^0=S^+\uplus S^-$, $S^+$ and $S^-$ trivial, and if $T=S^0\star U$, with $U$ a Young tableau. Among the trivial extractable tableaux, there is a largest one $S^0_{max}$, then $push^{\leftarrow\lambda}(T)$ is defined by
$$
T=S^0_{max}\star push^{\leftarrow\lambda}(T).
$$

Now, if $\lambda-\mu$ is not a shape, $push^{\leftarrow\lambda}(SS^\lambda)\cap SS^\mu=\emptyset$, and there is no natural morphism of $\mathfrak n^+$-module from $\mathbb S^\mu$ into $\mathbb S^\lambda$, but only the morphism from $\mathbb S^{\mu}$ into $\mathbb S^{k\eta+\lambda}$ given by $v\mapsto v_{k\eta+\lambda-\mu}\star v$ and described as at the end of the Section \ref{shape}.\\

\begin{exple}

\

Consider the Lie super algebra $\mathfrak{sl}(1,2)$, and the shape $\lambda=((2),(1))$. Then we get the following picture for the base of $p(\mathbb S^\lambda)$

\begin{center}
\begin{picture}(180,120)(200,70)

\path(255,30)(335,30) \path(255,150)(215,90)
\path(215,90)(375,90) \path(450,30)(335,30)
\path(175,150)(255,30) \path(335,150)(255,150) 
\path(335,150)(255,30) \path(255,150)(335,30)
\path(335,150)(375,90) \path(375,90)(335,30)

 \put(335,150){\circle{3}}
\put(255,30){\circle{3}}
\put(215,90){\circle{3}} \put(335,30){\circle{3}}
\put(295,90){\circle{3}} \put(255,150){\circle{3}}
\put(375,90){\circle{3}}

\put(194,74) {\renewcommand{\arraystretch}{0.7}$\begin{array}{l}
\framebox{$1$}\\
\framebox{$3$}\\
\end{array}$}

\put(297,94){\renewcommand{\arraystretch}{0.7}$\begin{array}{l}
\framebox{$2$}\\
\framebox{$3$}\\
\end{array}$}

\put(238,165){\renewcommand{\arraystretch}{0.7}$\begin{array}{l}
\framebox{$3$}\\
\framebox{$3$}\\
\end{array}$}

\put(311,164){\renewcommand{\arraystretch}{0.7}$\begin{array}{l}
\framebox{$2$}\framebox{$3$}\\
\framebox{$3$}\\
\end{array}$}

\put(328,17){\framebox{$2$}}

\put(277,90){\framebox{$3$}}


\put(374,95){\renewcommand{\arraystretch}{0.7}$\begin{array}{l}
\framebox{$2$}\framebox{$3$}\\
\end{array}$}

\put(251,20){$0$}
\end{picture}
\end{center}

\vskip 3cm

Similarly, for $\mathfrak{sl}(2,1)$ and $\lambda=((1,1),)$, we get the picture:

\begin{center}
\begin{picture}(180,120)(200,70)

\path(255,30)(335,30) \path(255,150)(215,90)
\path(215,90)(375,90) \path(450,30)(335,30)
\path(175,150)(255,30) \path(335,150)(255,150) 
\path(335,150)(255,30) \path(255,150)(335,30)
\path(335,150)(375,90) \path(375,90)(335,30)

 \put(335,150){\circle{3}}
\put(255,30){\circle{3}}
\put(215,90){\circle{3}} \put(335,30){\circle{3}}
\put(295,90){\circle{3}} \put(255,150){\circle{3}}
\put(375,90){\circle{3}}

\put(194,74) {\renewcommand{\arraystretch}{0.7}$\begin{array}{l}
\framebox{$1$}\\
\framebox{$3$}\\
\end{array}$}

\put(297,94){\renewcommand{\arraystretch}{0.7}$\begin{array}{l}
\framebox{$2$}\\
\framebox{$3$}\\
\end{array}$}

\put(238,165){\renewcommand{\arraystretch}{0.7}$\begin{array}{l}
\framebox{$3$}\\
\framebox{$3$}\\
\end{array}$}

\put(311,164){\renewcommand{\arraystretch}{0.7}$\begin{array}{l}
\framebox{$2$}\framebox{$3$}\\
\framebox{$3$}\\
\end{array}$}

\put(328,17){\framebox{$2$}}

\put(277,90){\framebox{$3$}}


\put(374,95){\renewcommand{\arraystretch}{0.7}$\begin{array}{l}
\framebox{$2$}\framebox{$3$}\\
\framebox{$3$}
\end{array}$}

\put(251,20){$0$}
\end{picture}
\end{center}
\end{exple}

\vskip 2cm

\section{Super jeu de Taquin}


\

In this section, we define the `super jeu de taquin', {\sl i.e.} the notion of jeu de taquin for $\mathfrak{sl}(m,n)$-tableaux.\\

Let $\lambda=(a,a')$ be a $(m,n)$-shape and $\mu=(b,b')\leq\lambda$, such that $\lambda-\mu$ is a shape. We say that a Ferrer diagram $F=F^+\uplus F^-$ is a skew diagram with shape $\lambda-\mu$ if the shape of $F$ is obtained by suppressing the empty boxes in $\mu$ from $\lambda$. More precisely, denoting $F^\mu=(F^\mu)^+\uplus(F^\mu)^-$ (resp. $F^\lambda=(F^\lambda)^+\uplus(F^\lambda)^-$) a Ferrer diagram with shape $\mu$ (resp. $\lambda$), we see $(F^\mu)^+$, resp. $(F^\mu)^-$ as the corresponding subset at the top and left in $(F^\lambda)^+$, resp. $(F^\lambda)^-$, then $F^\pm=(F^\lambda)^\pm\setminus (F^\mu)^{\pm}$.

For instance, for $\mathfrak{sl}(2,3)$, if $\lambda=((1,3),(2,3))$ and $\mu=((1,1),(1,1))$, the skew Ferrer diagram $F$ with shape $\lambda-\mu$ is:
$$
F=\begin{tabular}{cccccc}
\cline{4-5}
\hskip 3cm&~~&~~&\multicolumn{1}{|c|}{~~}&\multicolumn{1}{|c|}{~~}&\hskip 3cm\\
\cline{3-5}
&&\multicolumn{1}{|c|}{~~}&\multicolumn{1}{|c|}{~~}&\multicolumn{1}{c}{~~}&\multicolumn{1}{c}{~~~~~~$_m$}\\
\hline
&\multicolumn{1}{c}{~~}&\multicolumn{1}{c}{~~}&&&\\
\cline{3-3}
&\multicolumn{1}{c}{~~}&\multicolumn{1}{|c|}{~~}&&&\\
\cline{2-3}
&\multicolumn{1}{|c|}{~~}&\multicolumn{1}{|c|}{~~}&&&\\
\cline{2-3}
&\multicolumn{1}{|c|}{~~}&&&&\\
\cline{2-2}
&\multicolumn{1}{|c|}{~~}&&&&\\
\cline{2-2}
\end{tabular}
$$

For such a skew Ferrer diagram $F$, we define the outer corners as the box $(i,j)$ such that $(i,j)$ is a box in $(F^\mu)^\pm$, but $(i+1,j)$ and $(i,j+1)$ are not in $(F^\mu)^\pm$. Similarly we define inner corner for $F^\pm$ as the boxes $(i,j)$ which are in $F^\pm$ and such that $(i+1,j)$ and $(i,j+1)$ are not in $F^\pm$.

For future purpose, we put a total ordering on the set $Out(F)=Out(F^+)\cup Out(F^-)$ of outer corners of $F$ by putting:
$$\aligned
Out(F^+)&=\{c_1=(i_1,j_1)<\dots<c_p=(i_p,jp)\},\\
Out(F^-)&=\{c'_1=(i'_1,j'_1)<\dots<c'_q=(i'_q,j'_q)\},
\endaligned
$$
if $j_1<j_2<\dots<j_p$ and $i'_1<i'_2<\dots<i'_q$, moreover for any $r$ and $s$, $c_r<c'_s$.

For instance, in the preceding example, $Out(F)=\{(2,1)<(1,2)<(3,2)<(4,1)\}$.

We call semistandard skew tableau $T$ the filling of a skew Ferrer diagram by entries $t_{i,j}$ in $\{1,\dots,m+n\}$ such that for all $j$, $t_{i,j}\leq t_{i,j+1}$, and, if $t_{i,j}>m$, then $t_{i,j}<t_{i,j+1}$, for all $i$, $t_{i,j}\leq t_{i+1,j}$, and, if $t_{i,j}\leq m$, then $t_{i,j}<t_{i+1,j}$.

To define the super jeu de taquin on $T$, we put a star in a outer corner $c$ of the diagram of $T$, and push this star, step by step from the outer corner to a sequence of boxes through the following rules:

Suppose the star is at the box $(i,j)$.
\begin{itemize}
\item[1.] If $i>m$, we try to push to the bottom:
\subitem a. if the box $(i+1,j)$ exists and the box $(i,j+1)$ does not exist or $t_{i+1,j}<t_{i,j+1}$, we put the star in the box $(i+1,j)$ and the entry $t_{i+1,j}$ in the box $(i,j)$. The other entries are not modified.
\subitem b. if the box $(i,j+1)$ exists and the box $(i+1,j)$ does not exist or $t_{i+1,j}\not< t_{i,j+1}$, we put the star in the box $(i,j+1)$ and the entry $t_{i,j+1}$ in the box $(i,j)$. The other entries are not modified.
\subitem c. if the two boxes $(i+1,j)$ and $(i,j+1)$ do not exist (we say that $(i,j)$ is an inner corner for $T$), we suppress the box $(i,j)$ and the star.

\item[2.] If $i\leq m$, we try to push to the right:
\subitem a. if the box $(i,j+1)$ exists and the box $(i+1,j)$ does not exist or $t_{i,j+1}<t_{i+1,j}$ or $t_{i,j+1}=t_{i+1,j}>m$, we put the star in the box $(i,j+1)$ and the entry $t_{i,j+1}$ in the box $(i,j)$. The other entries are not modified.
\subitem b. if the box $(i+1,j)$ exists and the box $(i,j+1)$ does not exist or $t_{i,j+1}>t_{i+1,j}$ or $t_{i,j+1}=t_{i+1,j}\leq m$, we put the star in the box $(i+1,j)$ and the entry $t_{i+1,j}$ in the box $(i,j)$. The other entries are not modified.
\subitem c. if the two boxes $(i+1,j)$ and $(i,j+1)$ do not exist ($(i,j)$ is an inner corner for $T$), we suppress the box $(i,j)$ and the star.
\end{itemize}

At the end of the super jeu de taquin, we get a new skew tableau $T_c=sjdt_c(T)$, the number of boxes in $T_c$ is (number of boxes in $T$) -1. By construction, $T_c$ is a semi standard skew tableau.

\begin{exple}

Let us consider $\mathfrak{sl}(2,3)$ and the following skew semistandard tableaux, with $c=(2,1)$:
$$\aligned
T&=\begin{tabular}{cc|c|c|c}
\cline{3-4}
\hskip 3cm&~~&$1$&$2$&\hskip 3cm\\
\cline{3-4}
&$\star$&$2$&\multicolumn{1}{c}{~~}&\multicolumn{1}{c}{~~~~~~$_2$}\\
\hline
&\multicolumn{1}{|c|}{$3$}&\multicolumn{1}{|c|}{$4$}\\
\cline{2-3}
&\multicolumn{1}{|c|}{$3$}&\multicolumn{1}{|c|}{$5$}\\
\cline{2-3}
&\multicolumn{1}{|c|}{$4$}&\multicolumn{1}{c}{~~}\\
\cline{2-2}
\end{tabular}~~\longrightarrow~~\begin{tabular}{cc|c|c|c}
\cline{3-4}
\hskip 3cm&~~&$1$&$2$&\hskip 3cm\\
\cline{2-4}
&\multicolumn{1}{|c|}{$2$}&\multicolumn{1}{|c|}{$\star$}&\multicolumn{1}{c}{~~}&\multicolumn{1}{c}{~~~~~~$_2$}\\
\hline
&\multicolumn{1}{|c|}{$3$}&\multicolumn{1}{|c|}{$4$}\\
\cline{2-3}
&\multicolumn{1}{|c|}{$3$}&\multicolumn{1}{|c|}{$5$}\\
\cline{2-3}
&\multicolumn{1}{|c|}{$4$}&\multicolumn{1}{c}{~~}\\
\cline{2-2}
\end{tabular}~~\longrightarrow~~\begin{tabular}{cc|c|c|c}
\cline{3-4}
\hskip 3cm&~~&$1$&$2$&\hskip 3cm\\
\cline{2-4}
&\multicolumn{1}{|c|}{$2$}&\multicolumn{1}{|c|}{$4$}&\multicolumn{1}{c}{~~}&\multicolumn{1}{c}{~~~~~~$_2$}\\
\hline
&\multicolumn{1}{|c|}{$3$}&\multicolumn{1}{|c|}{$\star$}\\
\cline{2-3}
&\multicolumn{1}{|c|}{$3$}&\multicolumn{1}{|c|}{$5$}\\
\cline{2-3}
&\multicolumn{1}{|c|}{$4$}&\multicolumn{1}{c}{~~}\\
\cline{2-2}
\end{tabular}\\
&\longrightarrow~~\begin{tabular}{cc|c|c|c}
\cline{3-4}
\hskip 3cm&~~&$1$&$2$&\hskip 3cm\\
\cline{2-4}
&\multicolumn{1}{|c|}{$2$}&\multicolumn{1}{|c|}{$4$}&\multicolumn{1}{c}{~~}&\multicolumn{1}{c}{~~~~~~$_2$}\\
\hline
&\multicolumn{1}{|c|}{$3$}&\multicolumn{1}{|c|}{$5$}\\
\cline{2-3}
&\multicolumn{1}{|c|}{$3$}&\multicolumn{1}{|c|}{$\star$}\\
\cline{2-3}
&\multicolumn{1}{|c|}{$4$}&\multicolumn{1}{c}{~~}\\
\cline{2-2}
\end{tabular}~~\longrightarrow~~\begin{tabular}{cc|c|c|c}
\cline{3-4}
\hskip 3cm&~~&$1$&$2$&\hskip 3cm\\
\cline{2-4}
&\multicolumn{1}{|c|}{$2$}&\multicolumn{1}{|c|}{$4$}&\multicolumn{1}{c}{~~}&\multicolumn{1}{c}{~~~~~~$_2$}\\
\hline
&\multicolumn{1}{|c|}{$3$}&\multicolumn{1}{|c|}{$5$}\\
\cline{2-3}
&\multicolumn{1}{|c|}{$3$}&\multicolumn{1}{c}{~~}\\
\cline{2-2}
&\multicolumn{1}{|c|}{$4$}&\multicolumn{1}{c}{~~}\\
\cline{2-2}
\end{tabular}~~=~~T_c.
\endaligned
$$

Similarly,
$$\aligned
T&=\begin{tabular}{cc|c|c|c}
\cline{3-4}
\hskip 3cm&~~&$1$&$2$&\hskip 3cm\\
\cline{3-4}
&$\star$&$4$&\multicolumn{1}{c}{~~}&\multicolumn{1}{c}{~~~~~~$_2$}\\
\hline
&\multicolumn{1}{|c|}{$3$}&\multicolumn{1}{|c|}{$5$}\\
\cline{2-3}
&\multicolumn{1}{|c|}{$4$}&\multicolumn{1}{|c|}{$5$}\\
\cline{2-3}
&\multicolumn{1}{|c|}{$5$}&\multicolumn{1}{c}{~~}\\
\cline{2-2}
\end{tabular}~~\longrightarrow~~\begin{tabular}{cc|c|c|c}
\cline{3-4}
\hskip 3cm&~~&$1$&$2$&\hskip 3cm\\
\cline{2-4}
&\multicolumn{1}{|c|}{$3$}&\multicolumn{1}{|c|}{$4$}&\multicolumn{1}{c}{~~}&\multicolumn{1}{c}{~~~~~~$_2$}\\
\hline
&\multicolumn{1}{|c|}{$\star$}&\multicolumn{1}{|c|}{$5$}\\
\cline{2-3}
&\multicolumn{1}{|c|}{$4$}&\multicolumn{1}{|c|}{$5$}\\
\cline{2-3}
&\multicolumn{1}{|c|}{$5$}&\multicolumn{1}{c}{~~}\\
\cline{2-2}
\end{tabular}~~\longrightarrow~~\begin{tabular}{cc|c|c|c}
\cline{3-4}
\hskip 3cm&~~&$1$&$2$&\hskip 3cm\\
\cline{2-4}
&\multicolumn{1}{|c|}{$3$}&\multicolumn{1}{|c|}{$4$}&\multicolumn{1}{c}{~~}&\multicolumn{1}{c}{~~~~~~$_2$}\\
\hline
&\multicolumn{1}{|c|}{$4$}&\multicolumn{1}{|c|}{$5$}\\
\cline{2-3}
&\multicolumn{1}{|c|}{$\star$}&\multicolumn{1}{|c|}{$5$}\\
\cline{2-3}
&\multicolumn{1}{|c|}{$5$}&\multicolumn{1}{c}{~~}\\
\cline{2-2}
\end{tabular}\\
&\longrightarrow~~\begin{tabular}{cc|c|c|c}
\cline{3-4}
\hskip 3cm&~~&$1$&$2$&\hskip 3cm\\
\cline{2-4}
&\multicolumn{1}{|c|}{$3$}&\multicolumn{1}{|c|}{$4$}&\multicolumn{1}{c}{~~}&\multicolumn{1}{c}{~~~~~~$_2$}\\
\hline
&\multicolumn{1}{|c|}{$4$}&\multicolumn{1}{|c|}{$5$}\\
\cline{2-3}
&\multicolumn{1}{|c|}{$5$}&\multicolumn{1}{|c|}{$\star$}\\
\cline{2-3}
&\multicolumn{1}{|c|}{$5$}&\multicolumn{1}{c}{~~}\\
\cline{2-2}
\end{tabular}~~\longrightarrow~~\begin{tabular}{cc|c|c|c}
\cline{3-4}
\hskip 3cm&~~&$1$&$2$&\hskip 3cm\\
\cline{2-4}
&\multicolumn{1}{|c|}{$3$}&\multicolumn{1}{|c|}{$4$}&\multicolumn{1}{c}{~~}&\multicolumn{1}{c}{~~~~~~$_2$}\\
\hline
&\multicolumn{1}{|c|}{$4$}&\multicolumn{1}{|c|}{$5$}\\
\cline{2-3}
&\multicolumn{1}{|c|}{$5$}&\multicolumn{1}{c}{~~}\\
\cline{2-2}
&\multicolumn{1}{|c|}{$5$}&\multicolumn{1}{c}{~~}\\
\cline{2-2}
\end{tabular}~~=~~T_c.
\endaligned
$$
\end{exple}

It is possible to describe the extraction procedure by using the super jeu de taquin.\\

\begin{prop}

\

Suppose $(S^+,S^-)$ is a trivial extractable pair, with shape $\mu=(b,b')$ in the semistandard tableau $T=T^+\uplus T^-$, with shape $\lambda=(a,a')$. Consider the semistandard skew tableau $R=T\setminus S$, and apply the super jeu de taquin to $T\setminus S$, starting from the greatest outer corner $c$ in $Out(R)$. We get the skew tableau $R_c$ with shape $(a^c,(a^c)')-(b^c,(b^c)')$.
\begin{itemize}

\item If $c=(i,j)$ and $i>m$, then the star is going each step to the bottom. Especially $a^c=a$, $b^c=b$, and $(a^c)'_k=a'_k$, $(b^c)'_k=b'_k$, except for $k=j$:
$$
(a^c)'_j=a'_j-1,~~(b^c)'_j=b'_j-1,
$$ 
and for $k=j-1$, if $j>1$:
$$
(a^c)'_{j-1}=a'_{j-1}+1,~~(b^c)'_{j-1}=b'_{j-1}+1.
$$

\item if $c=(i,j)$ and $i\leq m$, then the star is going each step to the right. Especially $(a^c)'=a'$, $(b^c)'=b'$, and $a^c_k=a_k$, $b^c_k=b_k$, except for $k=i$:
$$
a^c_i=a_i-1,~~b^c_i=b_i-1,
$$ 
and for $k=i-1$, if $i>1$:
$$
a^c_{i-1}=a_{i-1}+1,~~b^c_{i-1}=b_{i-1}+1.
$$
\end{itemize}
\end{prop}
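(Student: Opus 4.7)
The plan is to treat the two cases separately, with the case $i \leq m$ being analogous to the case $i > m$ after swapping the roles of rows and columns and using the ``horizontal'' extractability condition E1 in place of E2. First I would pin down $c$: since every element of $Out(F^-)$ dominates every element of $Out(F^+)$ in the stated ordering, the case $i>m$ forces $c \in Out(F^-)$, and the requirement that $c$ has the largest row index, together with the fact that $B'_k = b'_k + \dots + b'_{n-1}$ is non-increasing in $k$, pins down $c = (m+B'_j, j)$ with $j = \min\{k : b'_k > 0\}$. Dually in the case $i\leq m$ (which can only occur when $Out(F^-) = \emptyset$, that is $b'=0$), one gets $c = (i_0, B_{i_0})$ with $i_0 = \min\{i : b_i > 0\}$.

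The heart of the proof is to show inductively that in the case $i > m$ the star descends at every step of the jeu de taquin, from $(m+B'_j, j)$ down to $(m+A'_j, j)$. I would parametrise its position as $(m+B'_j+k, j)$ for $k = 0, 1, \ldots$, and distinguish whether the right neighbour $(m+B'_j+k, j+1)$ lies in the skew tableau. If it does not, rule~1a forces a downward move (or rule~1c exits, if the cell below is also absent). When both neighbours are present in $R$, applying the extractability condition E2 at index $I = m+k+1$ yields
$$
t_{m+B'_j+k+1,\, j} \;<\; t_{m+k+1+B'_{j+1},\, j+1},
$$
and since $b'_j > 0$ gives $B'_j > B'_{j+1}$, we have $m+k+1+B'_{j+1} \leq m+B'_j+k$, so column weak monotonicity gives $t_{m+k+1+B'_{j+1},\, j+1} \leq t_{m+B'_j+k,\, j+1}$. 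Chaining these two inequalities places us in rule~1a, so the star moves down.

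Termination happens at $k = A'_j - B'_j$: the cell below is outside $F^\lambda$, and the right cell is too, because $a'_j \geq b'_j \geq 1$ implies $A'_j > A'_{j+1}$; rule~1c then removes the star together with the box $(m + A'_j, j)$. The net geometric effect is that column $j$ of the skew tableau has shifted up by one box, so both $F^\lambda$ and $F^\mu$ lose one box at the bottom of their respective column $j$ below the line. Translating back to the parameters $a'_k$, $b'_k$, shortening a single row of the underlying Young diagram from length $j$ to length $j-1$ gives $(a^c)'_j = a'_j - 1$ and $(a^c)'_{j-1} = a'_{j-1} + 1$ when $j > 1$ (and only the first change when $j = 1$), with identical updates for $b'$; the upper parameters $a$ and $b$ are untouched. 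The case $i \leq m$ follows the same scheme with E1 replacing E2 and row-weak-increase replacing the column one.

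The main obstacle I anticipate is careful bookkeeping of the many sub-cases in the super jeu de taquin rules, especially the clauses involving ``$t_{i+1,j} > m$'' and ``$t_{i,j+1} \leq m$'' in rules~2a and~2b; one must check that the strict inequality delivered by the extractability condition always selects the intended sub-case, never accidentally placing the star in the wrong direction at an intermediate step.
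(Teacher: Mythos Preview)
Your proposal is correct and follows essentially the same approach as the paper: both argue inductively on the position of the star, using the extractability inequality E2 (resp.\ E1) together with column (resp.\ row) monotonicity to verify that rule~1a (resp.\ 2a) applies at every step. Your write-up is in fact more careful than the paper's---you pin down the greatest outer corner explicitly, treat termination, and flag the extra sub-cases in rule~2 involving equality with entries $>m$---but the core argument is identical.
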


\begin{proof}

Suppose first the box $c$ is below the line $m$. That means $c=(B'_j,j)$ (as above, $B'_j$ is the height of the tableau $S^-$, if $S=S^+\uplus S^-$) and $B'_{j+1}<B'_j$, suppose moreover that during $k$ moves, the star is moving down, it is therefore in the box $(B'_j+k,j)$. If this box is not an inner corner, the box $(B'_j+k+1,j)$ exists and $B'_{j+1}+k+1\leq B'_j+k+1$, since $S$ is extractable, we have: 
$$
t_{B'_j+k+1,j}<t_{B'_{j+1}+k+1,j+1}\leq t_{B'_j+k+1,j+1},
$$
if the box $(B'_j+k+1,j+1)$ exists. Thus the next move of the star is to the bottom.

Looking at the shape of $R_c$, this proves the first assertion in the proposition. Especially, if $\lambda^c=(a^c,(a^c)')$, $\mu^c=(b^c,(b^c)')$, then $\lambda^c-\mu^c=\lambda-\mu$ is a shape.

The case $c$ above the line $m$ is completely similar.\\
\end{proof}

This proposition means it is possible to write $R_c=T_c\setminus(S_c^+,S_c^-)$ with $r^c_{k,\ell}=t_{k,\ell}$ except, with our notations:
$$\aligned
&\text{if }i>m,&~~&r^c_{k,j}=t_{k+1,j}&~~&(i\leq k<A'_j+m),\\
&\text{if }i\leq m,&~~&r^c_{i,k}=t_{i,k+1}&~~&(j\leq k<A_i),
\endaligned
$$
and $(S_c^+,S_c^-)$ is a trivial pair, with shape $(b^c,(b^c)')$. Remark that this pair is extractable. Indeed the tableau $U$ in Definition \ref{pairextractable} is the same for $(T,(S^+,S^-))$ and $(T_c,(S_c^+,S_c^-))$.

Moreover suppose $Out(R)=\{c_1<\dots<c_p<c'_1<\dots<c'_q\}$, then the greatest element $d$ in $Out(R_c)$ is the following:
\begin{itemize}
\item If $c=c'_q=(i'_q,j'_q)$, then:
$$\aligned
&d=(i'_q,j'_q-1)&~~&\text{if }j'_q>1,\\
&d=(i'_q-1,1)&~~&\text{if }j'_q=1~~\text{ and }~~\sup\{i'_{q-1},m\}<i'_q-1,\\
&d=c'_{q-1}=(i'_{q-1},j'_{q-1})&~~&\text{if }j'_q=1~~\text{ and }~~m<i'_{q-1}=i'_q-1,\\
&d=c_p=(m,j_p)&~~&\text{if }j'_q=1~~\text{ and }~~m=i'_q-1.
\endaligned
$$
\item If $c=c_p=(i_p,j_p)\neq(1,1)$, then:
$$\aligned
&d=(i_p-1,j_p)&~~&\text{if }i_p>1,\\
&d=(1,j_p-1)&~~&\text{if }i_p=1~~\text{ and }~~j_{p-1}<j_p-1,\\
&d=c_{p-1}=(i_{p-1},j_{p-1})&~~&\text{if }i_p=1~~\text{ and }~~j_{p-1}=j_p-1.
\endaligned
$$
\item If $c=c_1=(1,1)$, then $Out(R_c)$ is empty.
\end{itemize}

Now it is possible to repeat the above procedure for $R_c$, if $d$ exists. We get a sequence of successive greatest outer corners denoted $(d_1,\dots,d_k)$ and a sequence of skew tableaux, coming from the super jeu de taquin: $R_0=R$, $R_1=sjdt_{d_1}(R)$,\dots, $R_k=sjdt_{d_k}(R_{k-1})$. If $(S^+,S^-)$ is the maximal trivial extractable pair in $T$, let us put $R_k=maxjdt(R)$.

\begin{prop}

\

$R_k$ is a semistandard tableau. If $(S^+,S^-)$ is the maximal trivial extractable pair in $T$, $R_k=push(T)$, or:
$$
push=maxjdt.
$$
\end{prop}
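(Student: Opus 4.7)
The plan is to prove the statement by induction on the total number $N=|S^+|+|S^-|$ of boxes in the trivial pair that we wish to extract. The preceding proposition is precisely tailored to provide the inductive step: one application of $sjdt_c$ at the greatest outer corner $c$ of the skew tableau $R=T\setminus(S^+,S^-)$ produces $R_c=T_c\setminus(S_c^+,S_c^-)$, where $(S_c^+,S_c^-)$ is again a trivial pair inside the semistandard tableau $T_c$, with exactly one box fewer. The key observation, explicitly noted just after that proposition, is that the ``extracted semistandard tableau'' $U$ of Definition \ref{pairextractable} coincides for $(T,(S^+,S^-))$ and $(T_c,(S_c^+,S_c^-))$, because the rearrangement performed by the super jeu de taquin only moves the entries of $R$ inside the shape of $R$, while the entries of $U$ are untouched.

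Now iterate. Setting $R_0=R$, the greatest outer corner rule described in the excerpt picks $d_1\in Out(R_0)$, then $d_2\in Out(R_1)$, and so on, and each application produces $R_i=T_i\setminus(S_i^+,S_i^-)$ with $|S_i^+|+|S_i^-|=N-i$, while the attached semistandard tableau $T_i$ continues to carry the same extracted part $U$. After exactly $N$ steps, the pair $(S_N^+,S_N^-)$ is empty, so $Out(R_N)=\emptyset$ and the procedure halts with $R_k=R_N=T_N\setminus\emptyset=T_N$, which is a genuine (not skew) semistandard Young tableau equal to $U$. This proves the first assertion that $R_k$ is semistandard.

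For the second assertion, assume $(S^+,S^-)=(S_T^+,S_T^-)$ is the maximal trivial extractable pair in $T$. By the definition of $push$ given in the previous section, $push(T)$ is exactly the semistandard tableau $U$ associated by Definition \ref{pairextractable} to the maximal pair. Since the preceding paragraph identifies $R_k$ with this same $U$, we obtain $R_k=push(T)$, i.e.\ $maxjdt=push$ on semistandard tableaux.

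The main technical obstacle lies entirely in the inductive step just invoked, i.e.\ in making sure that at each stage the greatest outer corner $d_{i+1}$ of $R_i$ is in fact an outer corner of the trivial part $(S_i^+,S_i^-)\subset T_i$, so that the previous proposition genuinely applies. This requires checking that the rule describing $d$ (the case analysis given before the statement: $c=c'_q$ with $j'_q>1$, $j'_q=1$, etc., and the analogous cases $c=c_p$ above the separating row $m$) is compatible with the extractability conditions E1--E4 on the remaining trivial pair after each move. This is a finite case verification; no genuinely new idea is needed beyond the previous proposition and the invariance of $U$ under one application of $sjdt$.
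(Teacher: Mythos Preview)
Your proof is correct and follows essentially the same approach as the paper: both rely on the preceding proposition (each $sjdt$ at the greatest outer corner is a pure vertical or horizontal shift) and on the invariance of the associated tableau $U$ under one such move. The paper's argument is simply a terse version of your induction --- it observes that every box of the trivial pair eventually occurs among the $d_i$, so the net effect on entries is $r^k_{i,j}=t_{i+B'_j,j}$ (resp.\ $t_{i,j+B_i}$), which is exactly $u_{i,j}$. Your ``technical obstacle'' at the end is not a real worry: since $R_i=T_i\setminus(S_i^+,S_i^-)$ by construction, the outer corners of $R_i$ are by definition the outer corners of the trivial pair, and extractability is preserved precisely because $U$ is invariant, as already noted before the proposition.
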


\begin{proof}

By definition, $Out(R_k)$ is empty, this means the skew tableau $R_k$ is a tableau. We saw it is semistandard.

Let $r^k_{i,j}$ be an entry in $R_k$ such that $i>m$. Remark that all the boxes $(m+1,j),\dots,(m+B'_j,j)$ are in $\{d_1,\dots,d_k\}$, therefore:
$$
r^k_{i,j}=t_{i+B'_j,j}=u_{i,j}.
$$
Similarly, if $i\leq m$,
$$
r^k_{i,j}=t_{i,j+B_i}=u_{i,j}.
$$
Therefore $push=maxjdt$.\\
\end{proof}



\end{document}